\newcommand{\iu}{{i\mkern1mu}}
\renewenvironment{proof}{{\bfseries Proof.\hspace{0.3cm}}}{\qed}
\newtheorem{thm}{Theorem}[section]
\newtheorem{lem}{Lemma}[section]
\newtheorem{cor}{Corollary}[section]
\newdefinition{rmk}{Remark}[section]
\newproof{pf}{Proof}
\newproof{pot}{Proof of Theorem \ref{thm2}}
\newdefinition{definition}{Definition}[section]
\def\stretchint#1{\vcenter{\hbox{\stretchto[220]{\displaystyle\int}{#1}}}}
\def\bs{\!\!}
\def\ps@pprintTitle{%
 \let\@oddhead\@empty
 \let\@evenhead\@empty
 \def\@oddfoot{}%
 \let\@evenfoot\@oddfoot}
\begin{document}

\begin{frontmatter}

\title{Stability of Energy Stable Flux Reconstruction for the Diffusion Problem using the Interior Penalty and Bassi and Rebay II Numerical Fluxes}

%% Group authors per affiliation:

%% or include affiliations in footnotes:
\author[mymainaddress]{Samuel Quaegebeur
\corref{mycorrespondingauthor}}
\cortext[mycorrespondingauthor]{Corresponding author}
\ead{samuel.quaegebeur@mail.mcgill.ca}

\author[mymainaddress]{Siva Nadarajah}
%\cortext[mycorrespondingauthor]{Corresponding author}
\author[mymainaddress]{Farshad Navah}
%\cortext[mycorrespondingauthor]{Corresponding author}
\author[mymainaddress]{Philip Zwanenburg}
%\cortext[mycorrespondingauthor]{Corresponding author}

\address[mymainaddress]{Department of Mechanical Engineering, McGill University, Montreal, QC, H3A 0C3, Canada}

\begin{abstract}
Recovering some prominent high-order approaches such as the discontinuous Galerkin (DG) or the spectral difference (SD) methods, the flux reconstruction (FR) approach has been adopted by many individuals in the research community and is now commonly used to solve problems on unstructured grids over complex geometries. This approach relies on the use of correction functions to obtain a differential form for the discrete problem. A class of correction functions, named energy stable flux reconstruction (ESFR) functions, has been proven stable for the linear advection problem. This proof has then been extended for the diffusion equation using the local discontinuous Galerkin (LDG) scheme to compute the numerical fluxes. Although the LDG scheme is commonly used, many prefer the interior penalty (IP), as well as the Bassi and Rebay II (BR2) schemes. Similarly to the LDG proof, this article provides a stability analysis for the IP and the BR2 numerical fluxes. In fact, we obtain a theoretical condition on the penalty term to ensure stability. This result is then verified through numerical simulations. To complete this study, a von Neumann analysis is conducted to provide a combination of parameters producing the maximal time step while converging at the correct order. All things considered, this article has for purpose to provide the community with a stability condition while using the IP and the BR2 schemes.
\end{abstract}

\begin{keyword}
ESFR correction functions\sep Diffusion\sep Stability\sep Interior Penalty numerical fluxes\sep Bassi and Rebay 2 numerical fluxes
\end{keyword}

\end{frontmatter}

%\linenumbers
%\input{temp/part1_thm.tex}
\section{Introduction}
The discontinuous Galerkin method, proposed by Reed and Hill \cite{noauthor_triangular_nodate}, combines the interesting properties of finite-volume and the finite element methods. As explained in the book of Hesthaven and Warburton \cite{NDG}, it uses numerical fluxes to provide stability and high-order shape functions to represent the solution. In the last two decades, theoretical and numerical results have contributed to widen the knowledge of the method.
Concerning the diffusion equation, many numerical fluxes have been developed such as IP \cite{arnold_interior_1982}, BR2 \cite{bassi_high_2000}, LDG \cite{cockburn_local_1998} and CDG \cite{peraire_compact_2008}.
All of these methods contain a penalty term, which controls the jump between the cells or control volumes. Choosing this parameter appropriately ensures stability.

The unifying framework of flux reconstruction, developed by Huynh \cite{huynh_flux_2007}, \cite{huynh_reconstruction_2009}, enables the recovery, through the use of correction functions, of not only the DG method, but also other high-order methods such as SD~\cite{liu_spectral_2006}. These methods are of special interest since, as shown in~\cite{huynh_flux_2007}, \cite{huynh_reconstruction_2009} through a von Neumann analysis, they allow for a larger time step compared to DG; however few studies have been achieved on the stability of these methods. In order to fill this gap of knowledge, Vincent \cite{vincent_new_2011} defined a class of correction functions named Vincent Castonguay Jameson Huynh (VCJH) schemes, containing DG, SD and g2, which is stable for the linear advection problem. The mathematical proof is based on the energy of the solution and hence this class has taken the name of energy stable flux reconstruction (ESFR) schemes. This proof was extended to the diffusion equation by Castonguay et al. \cite{castonguay_energy_2013}. Employing the LDG numerical flux and the ESFR correction functions, they proved that taking a positive penalty term ensures the stability of the method. The purpose of this article is to extend the theoretical proof of~\cite{castonguay_energy_2013} to the compact IP and BR2 schemes and to provide a von Neumann analysis, calculating the maximal time steps, and the impact on the order of accuracy of the schemes.

This article is composed as follows : Section~\ref{sec:independent kappa} shows that the method is independent of the auxiliary correction function when employing the IP or the BR2 numerical fluxes. Section~\ref{sec:section 1 IP} contains the theoretical proof of stability for the IP scheme and numerical verifications, Section~\ref{sec:section 2 BR2} presents the analogy between the IP scheme and the BR2 scheme, Section~\ref{sec:Von neumann analysis} exposes a von Neumann analysis showing the dissipation and dispersion relations as well as the maximum time step. We conclude this paper with Section~\ref{sec:L2 errors} presenting the $L_{2}$-errors for a given problem.
Therefore this article allows to make a sensible choice of parameters in order to combine a stable scheme with a high time step and a correct order of accuracy.
As the theoretical result is based on the work of Castonguay et al.~\cite{castonguay_energy_2013}, we strongly advise the reader
to consult this reference \textit{a priori}. The current article will use similar notations in an attempt to be as comprehensible as possible.
\section{Preliminaries}\label{sec:Preliminaries}
We present, briefly, the flux reconstruction scheme for the diffusion problem and introduce the notations used throughout this article. The FR approach is based on the work of Huynh \cite{huynh_reconstruction_2009}.

Let us consider the diffusion equation,
\begin{equation}\label{eq:diffusion equation}
\dfrac{\partial u}{\partial t}= b\Delta u, \,\, x\in\,\Omega ,\,\,t\in\left[0,T\right]
\end{equation}
where $u\left(x,t\right)$ is the solution in the physical space, $b$ is the diffusion parameter, $\Delta$ is the Laplacian operator, $\Omega$ is the physical domain and $T$ is a positive value. We apply the tesselation $\mathcal{T}_{h}=\sum_{n=1}^{N_{K}}\Omega_{n}$ such that the spatial domain $\Omega$ is approximated by $N_{K}$ non-overlapping and continuous elements where $\Omega_{n}$ is the $n$-th element. The second-order PDE~\eqref{eq:diffusion equation} can be written as a system of two first-order equations
\begin{empheq}[left=\empheqlbrace]{align}
\dfrac{\partial u_{n}}{\partial t}&=b\nabla q_{n},\label{eq:main eq}\\
q_{n}&=\nabla u_{n},\label{eq:aux eq}
\end{empheq}
where $\nabla=\dfrac{\partial}{\partial x}$.

We refer to~\eqref{eq:main eq} as the primary equation and~\eqref{eq:aux eq} as the auxiliary equation.
Equations~\eqref{eq:main eq} and~\eqref{eq:aux eq} are valid on each element $\Omega_{n}=\left[x_{n},x_{n+1}\right]$, $n\in \llbracket 1,N_{K} \rrbracket$ (interval of the type $\llbracket .,.\rrbracket$ denotes an integer interval). We note $u_{n}$ and $q_{n}$ the solution and the corrected gradient on the element $\Omega_{n}$. We then map the equation from the physical into the computational space through the affine mapping $\mathcal{M}_{n}$
\begin{equation}\label{eq:affine mapping}
\begin{array}{lccl}
\mathcal{M}_{n}\colon &\left[-1 , 1\right]&\to& \Omega_{n}\\
&r&\mapsto&\dfrac{\left(1-r\right)}{2}x_{n}+\dfrac{\left(1+r\right)}{2}x_{n+1}.
\end{array}
\end{equation}
The Jacobian of this mapping in element $\Omega_{n}$ is noted as $J_{n}$,
\begin{equation}\label{eq:def jacobian}
J_{n}=\dfrac{\mbox{d}x}{\mbox{d}r}=\dfrac{\mbox{d} \mathcal{M}_{n}\left(r\right)}{\mbox{d}r}=\dfrac{x_{n+1}-x_{n}}{2}=\dfrac{h_{n}}{2},
\end{equation}
where $h_{n}$ is the length of element $\Omega_{n}$.
Using this mapping, we write equations~\eqref{eq:main eq} and~\eqref{eq:aux eq} in the computational domain as
\begin{empheq}[left=\empheqlbrace]{align}
\dfrac{\partial u_{n}}{\partial t}&=b\dfrac{1}{J_{n}}\nabla_{r} q_{n},\label{eq:main eq compu}\\
q_{n}&=\dfrac{1}{J_{n}}\nabla_{r} u_{n},\label{eq:aux eq compu}
\end{empheq}
where $\nabla_{r}=\dfrac{\partial}{\partial r}$.

Equations~\eqref{eq:main eq compu} and~\eqref{eq:aux eq compu} are insufficient to describe the phenomenom occuring in the entire domain. To take into account the transfer of information between the cells, we must add a numerical flux to each equation. In this article, we will focus on the IP and BR2 schemes. We note $\left(u\right)^{*}$ as the numerical flux associated to the auxiliary equation and $\left(q\right)^{*}$ as the numerical flux associated to the primary equation. We then express equations \eqref{eq:main eq compu} and \eqref{eq:aux eq compu} as

\begin{empheq}[left=\empheqlbrace]{align}
\dfrac{\partial u_{n}}{\partial t}&=b\dfrac{1}{J_{n}}\nabla_{r}\left[q+\left.\left(q_{n}^{*}-q_{n}\right)\right|_{-1}h_{L}+\left.\left(q_{n}^{*}-q_{n}\right)\right|_{1}h_{R}\right],\label{eq:main eq compu tot}\\
q_{n}&=\dfrac{1}{J_{n}}\nabla_{r}\left[u_{n}+\left.\left(u^{*}-u_{n}\right)\right|_{-1}g_{L}+\left.\left(u^{*}-u_{n}\right)\right|_{1}g_{R}\right],\label{eq:aux eq compu tot}
\end{empheq}
where $h_{L}$ and $h_{R}$ are the FR correction functions, defined in the computational space, for respectively the left and right boundaries of the primary equation while $g_{L}$ and $g_{R}$ are the corresponding correction functions for the auxiliary equation.

The main idea of these correction functions is to create continuous quantities through the edges of the element. Indeed, $u$ is usually discontinuous between elements $\Omega_{n}$ and $\Omega_{n+1}$. By adding the correction $\left.\left(u^{*}-u_{n}\right)\right|_{1}g_{R}$ to the solution $u_{n}$ on the element $\Omega_{n}$ and the correction $\left.\left(u^{*}-u_{n+1}\right)\right|_{-1}g_{L}$ to the solution $u_{n+1}$ on the element $\Omega_{n+1}$, we obtain a continuous quantity $u$ on $\Omega_{n}\cup \Omega_{n+1}$. A similar procedure is done for the quantity $q$ in the primary equation. To create these continuous quantities, the correction functions $g_{L}$ (respectively $g_{R}$) and $h_{L}$ (respectively $h_{R}$) must respect particular features \cite{huynh_reconstruction_2009},
\begin{empheq}[left=\empheqlbrace]{align}
g_{L}\left(-1\right)=g_{R}\left(1\right)=1,\label{eq:properties correction functions 1}\\
g_{R}\left(-1\right)=g_{L}\left(1\right)=0,\label{eq:properties correction functions 2}
\end{empheq}
and $h_{L}$ and $h_{R}$ respect the same properties. Moreover, the correction functions $g_{L}$ (respectively $g_{R}$) and $h_{L}$ (respectively $h_{R}$) are polynomials of degree $p+1$. Indeed, as we will see shortly the solution at the time $t_{n}$, $u_{n}^{t_{n}}$, is a polynomial of degree $p$ and therefore, for $u_{n}^{t_{n}+1}$ to be also a polynomial of degree $p$, we require the correction functions to be polynomials of degree $p+1$.

However, just respecting the above stated properties does not ensure the stability of the scheme. For the advection equation, Vincent et al. defined a class (VCJH) of correction functions \cite{vincent_new_2011}; if the correction function $g_{L}$ (similarly $g_{R}$) satisfies the following property (see Eq.~(3.32) in~\cite{vincent_new_2011}) then the scheme is energy stable for the advection equation, using the Lax-Friedrichs numerical flux
\begin{equation}\label{eq:ESFR correction functions}
\int_{-1}^{1}g_{L}\dfrac{\partial l_{i}}{\partial r} \mbox{d}r=c\left(\dfrac{\partial^{p}l_{i}}{\partial r^{p}}\right)\left(\dfrac{\partial^{p+1}g_{L}}{\partial r^{p+1}}\right),
\end{equation}
where $c$ is a scalar and $l_{i}$ is a Lagrange polynomial, which will be defined shortly. For $c\in\left[c_{-},\infty\right[$, the corresponding correction function $g$ provides a stable scheme for the advection equation.
 $c_{-}$ is a negative value, given in \cite{vincent_new_2011}.

The goal of this article is to obtain a stability condition for the advection-diffusion problem. Choosing the ESFR correction function only ensures the stability for the advection part. A new condition on the correction function for the diffusion term must be developed to ensure a stable scheme. 

The diffusion equation combines both the auxiliary and primary equations and thus requires two correction functions $g_{L}$ (respectively $g_{R}$) and $h_{L}$ (respectively $h_{R}$), where both ESFR correction functions are parametrized by a different scalar: $c$ for the $h$ correction function and $\kappa$ for the $g$ correction function.

Numerically, we do not require the analytical formulation of $g_{L}$ (respectively $g_{R}$) and $h_{L}$ (respectively $h_{R}$) but of their derivatives. The final expression for the one-dimensional diffusion equation reads,
\begin{empheq}[left=\empheqlbrace]{align}
\dfrac{\partial u_{n}}{\partial t}&=b\dfrac{1}{J_{n}}\left[\nabla_{r} q_{n}+\left.\left(q^{*}-q_{n}\right)\right|_{-1}\nabla_{r}h_{L}+\left.\left(q^{*}-q_{n}\right)\right|_{1}\nabla_{r}h_{R}\right],\label{eq:numerical main}\\
q_{n}&=\dfrac{1}{J_{n}}\left[\nabla_{r} u_{n}+\left.\left(u^{*}-u_{n}\right)\right|_{-1}\nabla_{r}g_{L}+\left.\left(u^{*}-u_{n}\right)\right|_{1}\nabla_{r}g_{R}\right],\label{eq:numerical aux}
\end{empheq}
where $\nabla_{r}g$ and $\nabla_{r}h$ are the derivatives in the computational space.

To complete the formulation, it remains to describe the numerical fluxes for the choosen schemes. We will present the LDG, the IP, and the BR2 numerical fluxes~\cite{NDG}.
\begin{equation}\label{eq:numerical fluxes}
\left\lbrace
\begin{aligned}
&\text{LDG:}\\
&u^{*}=\left\lbrace\left\lbrace u\right\rbrace\right\rbrace -\beta\llbracket u\rrbracket \\
&q^{*}=\left\lbrace\left\lbrace q\right\rbrace\right\rbrace +\beta \llbracket q\rrbracket-\tau \llbracket u\rrbracket
\end{aligned}
\right.
\left\lbrace
\begin{aligned}
&\text{IP:}\\
&u^{*}=\left\lbrace\left\lbrace u\right\rbrace\right\rbrace  \\
&q^{*}=\left\lbrace\left\lbrace\nabla u\right\rbrace\right\rbrace -\tau \llbracket u\rrbracket
\end{aligned}
\right.
\left\lbrace
\begin{aligned}
&\text{BR2:}\\
&u^{*}=\left\lbrace\left\lbrace u\right\rbrace\right\rbrace  \\
&q^{*}=\left\lbrace\left\lbrace\nabla u\right\rbrace\right\rbrace +s  \left\lbrace\left\lbrace r^{e}\left(\llbracket u\rrbracket\right)\right\rbrace\right\rbrace
\end{aligned}
\right.
\end{equation}
In the LDG scheme, $\beta$ is a scalar value, in 1D, while $\tau$ signifies the penalty term
for both LDG and IP, $s$ denotes the penalty term for the BR2 scheme. Lastly, $r^{e}$ is a lifting operator and is defined as
\begin{equation}
\displaystyle{
\int_{\Omega}r^{e}\left(\llbracket u\rrbracket\right)\phi\,\mbox{d}x=-\int_{e}\llbracket u\rrbracket \left\lbrace\left\lbrace \phi\right\rbrace\right\rbrace \,\mbox{d}s,
}
\end{equation}
which can be written in 1D as

\begin{equation}\label{eq:def re}
\displaystyle{
\int_{\Omega}r^{e}\left(\llbracket u\rrbracket\right)\phi\,\mbox{d}x=-\llbracket u_{e}\rrbracket\left\lbrace\left\lbrace \phi_{e}\right\rbrace\right\rbrace,
}
\end{equation}
where $\phi$ is a test function. The symbol $\llbracket\,\,\rrbracket$ denotes a jump (it must not be confused with the integer interval $\llbracket .,.\rrbracket$) and $\{\{\,\,\}\}$ a mean value. The former notation defines the  discontinuity of the solution across an edge, while the latter signifies the average of the solution across an edge.
\begin{figure}[H]
\centering
\includegraphics[width=10cm]{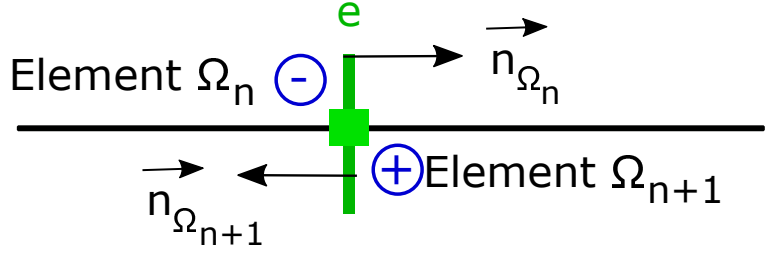}
\caption{Normal vector convention on an edge.}
\label{fig:jump}
\end{figure}

Typically, the quantity $u$ on the left side of an edge $e$ is referred as $u_{e,-}$ and on the right side of the edge: $u_{e,+}$. Referring to Figure~\ref{fig:jump}, we can define the jump and the mean value as,

\begin{eqnarray}
\llbracket u_{e} \rrbracket&=&u_{e,-}-u_{e,+},\label{eq:jump}\vspace{0.1cm}\\
\left\lbrace\left\lbrace u_{e}\right\rbrace\right\rbrace &=&\dfrac{1}{2}\left(u_{e,-}+u_{e,+}\right).\label{eq:mean}
\end{eqnarray}

\begin{figure}[H]
\centering
\includegraphics[width=15cm]{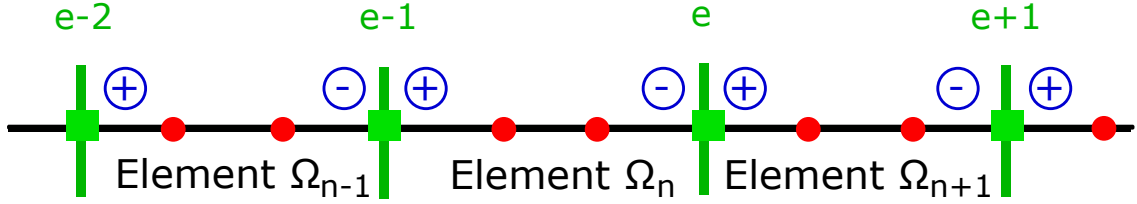}
\caption{Representation of several elements for $p=1$.}
\label{fig:edges_elements}
\end{figure}

Figure~\ref{fig:edges_elements} describes the interface $e$ located between elements $\Omega_{n}$ and $\Omega_{n+1}$. For instance $u_{e,-}$ represents the value taken by the solution on the element $\Omega_{n}$ on its right edge (green square on edge $e$).
Whereas  $u_{e,+}$ represents the value taken by the solution on the Element $\Omega_{n+1}$ on its left edge (green square on edge $e$). The red circles represent the $p+1$ solution points within any standard element, while the green square denotes the flux points. Indeed the solution on an element $\Omega_{n}$ will be calculated at $N_{p}$ nodes within the element. Using Lagrange polynomials we can interpolate the solution with a polynomial of degree $p=N_{p}-1$. The nodes, $r_{i}$, are typically chosen as the Lobatto-Gauss-Legendre (LGL) or the Gauss-Legendre (GL) nodes.
Finally the solution $u_{n}$ on the element $\Omega_{n}$ can be written as,
\begin{equation}\label{solution interpolated}
\displaystyle{
u_{n}=\sum_{i=1}^{N_{p}}\hat{u}_{i}l_{i},
}
\end{equation}
where $\hat{u}_{i}$ is the value of $u$ at the nodes $r_{i}$ and $l_{i}$ is the associated Lagrange polynomial,
\begin{equation}\label{eq:lagrange}
\displaystyle
l_{i}\left(r\right)=\prod_{\substack{j=1\\j\neq i}}^{N_{P}}\left(\dfrac{r-r_{j}}{r_{i}-r_{j}}\right).
\end{equation}

The number of nodes $N_{p}$ and hence the degree of the polynomial $p$ is of the utmost importance as with high-order methods we should get an error of order $\mathcal{O}\left(h^{p+1}\right)$ in the $L_{2}$ norm, where $h$ is a typical element size. We have
\begin{equation}
\Vert u \Vert_{0,\Omega}=\left[\int_{\Omega}u^{2}\,\mbox{d}x\right]^{1/2},
\end{equation}
the $L_{2}$ norm of $u$.

In summary, we can enumerate the different choices available for the FR scheme as,
\begin{enumerate}
\item the choice of the correction functions $g_{L}$ and $g_{R}$ is selected through the parameter $\kappa$. As we will show in the following section, the method is independent of $\kappa$ when employing the IP or the BR2 numerical fluxes. This theoretical result will be verified for two values of $\kappa$: $\kappa_{DG}$ and $\kappa_{+}$, which are particular values explained below. This article also contains the results of the LDG numerical flux. For this scheme, $\kappa$ will have a wide range of values: $\kappa\in[0,10^{8}]$.
\item the choice of the correction functions $h_{L}$ and $h_{R}$ is selected through the parameter $c$. These correction functions pertain to the advective term, if there is one. Many studies have been conducted for the pure advective equation. In the dissertation of Castonguay \cite{castonguay2012high}, it was shown that for too large a value of $c$, we lose one order of accuracy. A particular value of $c$ noted $c_{+}$ was defined as the parameter providing the maximal time step $\Delta t_{max}$ while preserving the order of accuracy (OOA). Therefore, in our analysis, we will mainly focus on a range of $c$ in: $\left[0, c_{+}\right]$.
In this paper we will present three particular schemes: DG (discontinuous Galerkin), SD (spectral difference), and HU (g2 method). For brevity, their analytical expressions will not be given, only the numerical values. One can find them in the paper of Vincent et al.~\cite{vincent_insights_2011}. 

\begin{table}[H]
\centering
\makebox[\textwidth][c]{
\begin{tabular}{|c|c|c|c|c|}
\hline
\bf \backslashbox{$c$}{$p$}&2&3&4&5\\
\hline
$c_{DG}$&0&0&0&0\\
\hline
$c_{SD}$&2.96e-02&9.52e-04&1.61e-05&1.70e-07\\
\hline
$c_{HU}$&6.67e-02&1.69e-03&2.52e-05&2.44e-07\\
\hline
$c_{+}$&1.86e-01&3.67e-03&4.79e-05&4.24e-07\\
\hline
\end{tabular}
}
\caption{Numerical values of the correction function parameter for four particular methods.}
\label{tab:values of c}
\end{table}
Although the three first rows of Table~\ref{tab:values of c} are obtained by analytical formulas, the fourth row, corresponding to $c_{+}$ are obtained via a von Neumann analysis for the RK54 scheme \cite{carpenter_fourth-order_1994}. These values are a bit different from the one obtained by Vincent et al.~\cite{vincent_insights_2011}. We suspect numerical approximations to be the source of these discrepancies.

\item the choice of number and position of nodes. Although the simulations have been done for equidistant, GL and LGL points; only the results of LGL will be presented for brevity, since the trends are similar. The degree of the polynomial has been taken to be $\llbracket 2,5\rrbracket$, but only $p=2$ and $p=3$ will be presented. Results for all other $p$ demonstrated similar results. 
\item the penalty term $\tau$ for the IP scheme and $s$ for the BR2 scheme.
\end{enumerate} 

\section{The diffusion equation, independent of $\kappa$}\label{sec:independent kappa}
The purpose of this section is to prove that using the ESFR schemes with the IP and the BR2 numerical fluxes results in an independency of the problem from $\kappa$.

The IP and the BR2 numerical fluxes described in equation~\eqref{eq:numerical fluxes} yield for an element $\Omega_{n}$,
\begin{eqnarray}
\left.\left(u^{*}-u_{n}\right)\right|_{-1}=\dfrac{\llbracket u_{e-1}\rrbracket}{2},\vspace{0.2cm}\\
\left.\left(u^{*}-u_{n}\right)\right|_{1}=-\dfrac{\llbracket u_{e}\rrbracket}{2}.
\end{eqnarray}

The IP and BR2 schemes differ only by $q^{*}$. However for both schemes, there is no dependence on $q$ and hence on $\kappa$. For more generality, we will not replace $q^{*}$ by its expression in the following derivations.

We introduce the previous equalities in equations~\eqref{eq:numerical main} and~\eqref{eq:numerical aux},
\begin{empheq}[left=\empheqlbrace]{align}
\dfrac{\partial u_{n}}{\partial t}&=b\dfrac{1}{J_{n}}\left[\nabla_{r} q_{n}+\left.\left(q^{*}-q_{n}\right)\right|_{-1}\nabla_{r}h_{L}+\left.\left(q^{*}-q_{n}\right)\right|_{1}\nabla_{r}h_{R}\right],\label{eq:numerical main bis}\\
q_{n}&=\dfrac{1}{J_{n}}\left[\nabla_{r} u_{n}+\dfrac{\llbracket u_{e-1}\rrbracket}{2}\nabla_{r}g_{L}-\dfrac{\llbracket u_{e}\rrbracket}{2}\nabla_{r}g_{R}\right].\label{eq:numerical aux bis}
\end{empheq}

We now substitute equation~\eqref{eq:numerical aux bis} into equation~\eqref{eq:numerical main bis},
\begin{equation}\label{eq:ini demo}
\begin{array}{lll}
\dfrac{\partial u_{n}}{\partial t}&=&\dfrac{b}{J_{n}}\left[\dfrac{1}{J_{n}}\Delta_{r} u_{n}+\left.\left(q^{*}-\dfrac{1}{J_{n}}\nabla_{r}u_{n}\right)\right|_{-1}\nabla_{r}h_{L}+\left.\left(q^{*}-\dfrac{1}{J_{n}}\nabla_{r}u_{n}\right)\right|_{1}\nabla_{r}h_{R}\right]\vspace{0.2cm}\\
&&+\dfrac{b\llbracket u_{e-1}\rrbracket}{2J_{n}^{2}}\left[\Delta_{r}g_{L}-\nabla_{r}g_{L}\left(-1\right)\nabla_{r}h_{L}-\nabla_{r}g_{L}\left(1\right)\nabla_{r}h_{R}\right]\vspace{0.2cm}\\
&&-\dfrac{b\llbracket u_{e}\rrbracket}{2J_{n}^{2}}\left[\Delta_{r}g_{R}-\nabla_{r}g_{R}\left(-1\right)\nabla_{r}h_{L}-\nabla_{r}g_{R}\left(1\right)\nabla_{r}h_{R}\right],
\end{array}
\end{equation}
where $\Delta_{r}=\dfrac{\partial^{2}}{\partial r^{2}}$.
The first line of equation~\eqref{eq:ini demo} does not depend on $\kappa$; but the last two contain the correction function, $g$ and hence could potentially depend on $\kappa$.

\begin{thm}\label{thm:part 1 independent}
Employing the ESFR correction functions defined in Definition~\ref{def:Left} and~\ref{def:right}, the following equation is independent of $\kappa$,
\begin{equation}
\Delta_{r}g_{L}-\nabla_{r}g_{L}\left(-1\right)\nabla_{r}h_{L}-\nabla_{r}g_{L}\left(1\right)\nabla_{r}h_{R},
\end{equation}
where $g_{L}$ and $g_{R}$ are parametrized by $\kappa$ and $h_{L}$ and $h_{R}$ by $c$.
\end{thm}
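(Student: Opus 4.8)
The plan is to express everything in terms of the ESFR correction functions and show that the $\kappa$-dependence enters only through a term that cancels. Recall that the ESFR property~\eqref{eq:ESFR correction functions} defines $g_L$ (parametrized by $\kappa$) through its relation to the Lagrange basis, and that $g_L,g_R$ are polynomials of degree $p+1$. The key structural fact is that $\nabla_r g_L$ is a polynomial of degree $p$, so it can be decomposed exactly in a suitable degree-$p$ basis. I would begin by differentiating $g_L$ once to obtain $\nabla_r g_L$, a degree-$p$ polynomial, and then differentiate again to obtain $\Delta_r g_L$, a degree-$(p-1)$ polynomial. The idea is that the highest-degree piece of $g_L$ — the part carrying the $\kappa$-dependence — is a scalar multiple of a single fixed polynomial (essentially the degree-$(p+1)$ Legendre-type mode whose coefficient is tuned by $\kappa$ via~\eqref{eq:ESFR correction functions}), so $g_L = g_L^{DG} + \kappa\,(\text{fixed polynomial})$ for an appropriate normalization.

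Concretely, I would write $g_L(r) = g_L^{DG}(r) + \alpha(\kappa)\,\psi(r)$, where $g_L^{DG}$ is the $\kappa$-independent DG correction function, $\psi$ is a fixed degree-$(p+1)$ polynomial (the ``correction mode''), and $\alpha(\kappa)$ is the scalar encoding the $\kappa$-dependence. Substituting this decomposition into the target expression $\Delta_r g_L - \nabla_r g_L(-1)\,\nabla_r h_L - \nabla_r g_L(1)\,\nabla_r h_R$ splits it into a $\kappa$-independent part (coming from $g_L^{DG}$) plus $\alpha(\kappa)$ times the analogous expression built from $\psi$, namely $\Delta_r \psi - \nabla_r \psi(-1)\,\nabla_r h_L - \nabla_r \psi(1)\,\nabla_r h_R$. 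The entire theorem then reduces to showing that this bracketed $\psi$-expression vanishes identically.

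The crux, therefore, is to identify $\psi$ explicitly and verify the vanishing. The natural candidate for $\psi$ is the polynomial proportional to the $(p+1)$-st Legendre mode, for which the endpoint-derivative boundary data $\nabla_r\psi(\pm1)$ and the second derivative $\Delta_r\psi$ are related through the very same correction-function construction that produces $\nabla_r h_L,\nabla_r h_R$. I expect the vanishing to follow because $\nabla_r h_L$ and $\nabla_r h_R$ are precisely the correction functions designed so that, when weighted by boundary derivative values and subtracted from an interior second-derivative term, they reconstruct a function of one lower degree — and for the pure correction mode $\psi$ this reconstruction annihilates the leading behaviour exactly. This is the main obstacle: one must pin down the exact normalization linking $\psi$, its endpoint derivatives, and the $h$-correction functions, and then confirm algebraically (likely using the defining relation~\eqref{eq:ESFR correction functions} and the endpoint conditions~\eqref{eq:properties correction functions 1}--\eqref{eq:properties correction functions 2}) that the combination is identically zero.

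Since the full derivation requires the explicit ESFR definitions in Definition~\ref{def:Left} and Definition~\ref{def:right} referenced by the statement, I would structure the argument to invoke those definitions once to obtain the decomposition $g_L = g_L^{DG} + \alpha(\kappa)\psi$, and then reduce all remaining work to the single identity $\Delta_r\psi = \nabla_r\psi(-1)\,\nabla_r h_L + \nabla_r\psi(1)\,\nabla_r h_R$. Establishing this one identity — rather than grinding through the full $\kappa$-dependent expression — is the efficient path, and the analogous statement for $g_R$ follows by the left-right symmetry of the construction.
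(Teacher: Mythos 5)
Your overall strategy---isolate the $\kappa$-dependence through a decomposition $g_{L}=g_{L}^{DG}+\alpha\left(\kappa\right)\psi$ with $\psi$ a fixed polynomial, and reduce the theorem by linearity to the single identity $\Delta_{r}\psi=\nabla_{r}\psi\left(-1\right)\nabla_{r}h_{L}+\nabla_{r}\psi\left(1\right)\nabla_{r}h_{R}$---is legitimate and genuinely different from the paper's proof, which instead computes every term of the full $\kappa$-dependent expression and exhibits the cancellation $\eta_{p,\kappa}p\left(p-1\right)+\left(p+1\right)\left(p+2\right)-2\left(2p+1\right)=\left(1+\eta_{p,\kappa}\right)p\left(p-1\right)$ only at the very end. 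The decomposition does exist: Definition~\ref{def:Left} gives
\begin{equation*}
g_{L}-g_{L}^{DG}=\dfrac{\left(-1\right)^{p}}{2}\,\dfrac{\eta_{p,\kappa}}{1+\eta_{p,\kappa}}\left(\Psi_{p+1}-\Psi_{p-1}\right),
\end{equation*}
so $\alpha\left(\kappa\right)=\eta_{p,\kappa}/\left(1+\eta_{p,\kappa}\right)$ (not $\kappa$ itself, as you first wrote, though this is immaterial to the linearity argument) and $\psi=\frac{\left(-1\right)^{p}}{2}\left(\Psi_{p+1}-\Psi_{p-1}\right)$.

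The gap is in your identification of the candidate $\psi$: you propose \enquote{the polynomial proportional to the $\left(p+1\right)$-st Legendre mode}, i.e.\ $\psi\propto\Psi_{p+1}$, and for that candidate the crucial identity is \emph{false}, so the proof as sketched collapses at its crux. Indeed, with $\psi=\Psi_{p+1}$, \ref{prop:derivative value +1 -1} gives $\nabla_{r}\psi\left(1\right)=\frac{\left(p+1\right)\left(p+2\right)}{2}$ and $\nabla_{r}\psi\left(-1\right)=\left(-1\right)^{p}\frac{\left(p+1\right)\left(p+2\right)}{2}$; the $c$-dependent parts of $\nabla_{r}h_{L}$ and $\nabla_{r}h_{R}$ then cancel in the weighted sum, leaving $\nabla_{r}\psi\left(-1\right)\nabla_{r}h_{L}+\nabla_{r}\psi\left(1\right)\nabla_{r}h_{R}=\frac{\left(p+1\right)\left(p+2\right)}{2}\Psi_{p}^{'}$, whereas $\Delta_{r}\psi=\Psi_{p+1}^{''}$, and these are not equal in general (for $p=2$: $\Psi_{3}^{''}=15r$ while $\frac{\left(p+1\right)\left(p+2\right)}{2}\Psi_{2}^{'}=18r$). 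The correct $\psi\propto\Psi_{p+1}-\Psi_{p-1}$ is exactly what rescues the argument, and for the reason you anticipated but did not pin down: by \ref{prop:derivative}, $\nabla_{r}\psi=\frac{\left(-1\right)^{p}}{2}\left(2p+1\right)\Psi_{p}$, hence $\Delta_{r}\psi=\frac{\left(-1\right)^{p}}{2}\left(2p+1\right)\Psi_{p}^{'}$, while $\nabla_{r}\psi\left(-1\right)=\frac{2p+1}{2}$ and $\nabla_{r}\psi\left(1\right)=\frac{\left(-1\right)^{p}\left(2p+1\right)}{2}$ by \ref{prop:value +1 -1}; the same cancellation of the $c$-dependent parts now yields $\nabla_{r}\psi\left(-1\right)\nabla_{r}h_{L}+\nabla_{r}\psi\left(1\right)\nabla_{r}h_{R}=\frac{\left(-1\right)^{p}\left(2p+1\right)}{2}\Psi_{p}^{'}=\Delta_{r}\psi$ for every $c$. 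So your plan does complete---and arguably more cleanly than the paper's term-by-term computation---but only once $\psi$ is derived from Definition~\ref{def:Left} rather than guessed; as written, the named candidate fails.
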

\begin{proof}
In the following derivations, the authors have used the properties of the Legendre polynomials, which are recalled in section~\ref{sec:Properties Legendre polynomial}. From the Definition~\ref{def:Left} of the left-ESFR correction function and from~\ref{prop:derivative value +1 -1}, we have,
\begin{equation}
\begin{array}{cll}
\nabla_{r}g_{L}\left(-1\right)&=&\dfrac{\left(-1\right)^{p}}{2}\left[\dfrac{\left(-1\right)^{p+1}p\left(p+1\right)}{2}-\dfrac{\eta_{p,\kappa}\left(-1\right)^{p}p\left(p-1\right)+\left(-1\right)^{p}\left(p+1\right)\left(p+2\right)}{2\left(1+\eta_{p,\kappa}\right)}\right]\\
&=&-\dfrac{1}{4}\left[p\left(p+1\right)+\dfrac{\eta_{p,\kappa}p\left(p-1\right)+\left(p+1\right)\left(p+2\right)}{\left(1+\eta_{p,\kappa}\right)}\right].
\end{array}
\end{equation}
Similarly, we obtain,
\begin{equation*}
\nabla_{r}g_{L}\left(1\right)=\dfrac{\left(-1\right)^{p}}{4}\left[p\left(p+1\right)-\dfrac{\eta_{p,\kappa}p\left(p-1\right)+\left(p+1\right)\left(p+2\right)}{\left(1+\eta_{p,\kappa}\right)}\right].
\end{equation*}

We now multiply the previous equations by $\nabla_{r}h_{L}$ (respectively $\nabla_{r}h_{R}$),
\begin{equation*}
\begin{array}{lll}
\nabla_{r}g_{L}\left(-1\right)\nabla_{r}h_{L}&=&\dfrac{\left(-1\right)^{p}}{8}\left[p\left(p+1\right)+\dfrac{\eta_{p,\kappa}p\left(p-1\right)+\left(p+1\right)\left(p+2\right)}{\left(1+\eta_{p,\kappa}\right)}\right]\left(-\Psi_{p}^{'}+\dfrac{\eta_{p,c}\Psi_{p-1}^{'}+\Psi_{p+1}^{'}}{\left(1+\eta_{p,c}\right)}\right),\\
\nabla_{r}g_{L}\left(1\right)\nabla_{r}h_{R}&=&\dfrac{\left(-1\right)^{p}}{8}\left[p\left(p+1\right)-\dfrac{\eta_{p,\kappa}p\left(p-1\right)+\left(p+1\right)\left(p+2\right)}{\left(1+\eta_{p,\kappa}\right)}\right]\left(\Psi_{p}^{'}+\dfrac{\eta_{p,c}\Psi_{p-1}^{'}+\Psi_{p+1}^{'}}{\left(1+\eta_{p,c}\right)}\right),
\end{array}
\end{equation*}
where $\Psi_{p}^{'}$ denotes the derivative of the Legendre polynomial of degree $p$.

Then, we add the above two equations to yield,
\begin{equation}\label{eq:sum gl hr}
\begin{array}{lll}
\nabla_{r}g_{L}\left(-1\right)\nabla_{r}h_{L}+\nabla_{r}g_{L}\left(1\right)\nabla_{r}h_{R}&=&\dfrac{\left(-1\right)^{p}}{4}p\left(p+1\right)\left(\dfrac{\eta_{p,c}\Psi_{p-1}^{'}+\Psi_{p+1}^{'}}{\left(1+\eta_{p,c}\right)}\right)\\
&&+\dfrac{\left(-1\right)^{p+1}}{4}\left(\dfrac{\eta_{p,\kappa}p\left(p-1\right)+\left(p+1\right)\left(p+2\right)}{\left(1+\eta_{p,\kappa}\right)}\Psi_{p}^{'}\right).
\end{array}
\end{equation}

Next, we shift our focus to the Laplacian of the correction function, differentiating~\ref{prop:derivative} with respect to $r$ yields,
\begin{equation}\label{eq:delta gl}
\begin{array}{lll}
\Delta_{r} g_{L}&=&\dfrac{\left(-1\right)^{p}}{2}\left[\Psi_{p}^{''}-\dfrac{\eta_{p,\kappa}\Psi_{p-1}^{''}+\Psi_{p+1}^{''}}{\left(1+\eta_{p,\kappa}\right)}\right]\vspace{0.2cm}\\
&=&\dfrac{\left(-1\right)^{p}}{2}\left[\Psi_{p}^{''}-\Psi_{p-1}^{''}-\dfrac{\left(2p+1\right)\Psi_{p}^{'}}{\left(1+\eta_{p,\kappa}\right)}\right].
\end{array}
\end{equation}

Substracting equation~\eqref{eq:sum gl hr} from~\eqref{eq:delta gl},
\begin{equation}
\begin{array}{lll}
\Delta_{r}g_{L}-\left(\nabla_{r}g_{L}\left(-1\right)\nabla_{r}h_{L}+\nabla_{r}g_{L}\left(1\right)\nabla_{r}h_{R}\right)&=&\dfrac{\left(-1\right)^{p}}{2}\left[\Psi_{p}^{''}-\Psi_{p-1}^{''}\right]\\
&+&\dfrac{\left(-1\right)^{p+1}}{4}p\left(p+1\right)\left(\dfrac{\eta_{p,c}\Psi_{p-1}^{'}+\Psi_{p+1}^{'}}{\left(1+\eta_{p,c}\right)}\right)\\
&+&\dfrac{\left(-1\right)^{p}}{4}\left(\dfrac{\eta_{p,\kappa}p\left(p-1\right)+\left(p+1\right)\left(p+2\right)-2\left(2p+1\right)}{\left(1+\eta_{p,\kappa}\right)}\Psi_{p}^{'}\right)\vspace{0.2cm}\\
&=&\dfrac{\left(-1\right)^{p}}{2}\left[\Psi_{p}^{''}-\Psi_{p-1}^{''}\right]\\
&+&\dfrac{\left(-1\right)^{p+1}}{4}p\left(p+1\right)\left(\dfrac{\eta_{p,c}\Psi_{p-1}^{'}+\Psi_{p+1}^{'}}{\left(1+\eta_{p,c}\right)}\right)\\
&+&\dfrac{\left(-1\right)^{p}}{4}p\left(p-1\right)\Psi_{p}^{'},
\end{array}
\end{equation}
where the final form only contains $\eta_{p,c}$.
\end{proof}

\begin{rmk}\label{rmk:extension gr}
The same property can be said for $\Delta_{r}g_{R}-\nabla_{r}g_{R}\left(-1\right)\nabla_{r}h_{L}-\nabla_{r}g_{R}\left(1\right)\nabla_{r}h_{R}$, we need only to adapt the previous proof to $g_{R}$.
\end{rmk}

\begin{cor}\label{cor:independent}
The diffusion equation is independent of $\kappa$ when employing the IP or the BR2 numerical fluxes.
\end{cor}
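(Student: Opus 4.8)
The plan is to read the result off directly from the decomposition already established in equation~\eqref{eq:ini demo}, treating Theorem~\ref{thm:part 1 independent} and Remark~\ref{rmk:extension gr} as the substantive ingredients. Equation~\eqref{eq:ini demo} expresses the semi-discrete update $\partial u_n/\partial t$ on an arbitrary element $\Omega_n$ as a sum of three contributions, so it suffices to show that each of the three is independent of $\kappa$; the claim for the full system then follows since $n$ was arbitrary.

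First I would dispose of the first line of~\eqref{eq:ini demo}. Its only building blocks are $\Delta_{r} u_{n}$, $\nabla_{r} u_{n}$, the numerical flux $q^{*}$ evaluated at $\pm 1$, the correction-function derivatives $\nabla_{r}h_{L}$ and $\nabla_{r}h_{R}$, and the geometric factor $J_{n}$. The correction functions $h_{L},h_{R}$ are parametrized by $c$ alone, and --- crucially --- for both the IP and BR2 fluxes $q^{*}$ depends on $u$ and $\nabla u$ but not on the corrected gradient $q$, as observed at the start of this section. Hence no $\kappa$ enters this line, $\kappa$ being carried only by the auxiliary correction functions $g_{L},g_{R}$. (This is exactly the point that fails for LDG, whose $q^{*}$ involves $q$ and therefore $\kappa$.)

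Next I would handle the remaining two lines, which are where the $g$ correction functions, and therefore $\kappa$, appear. The bracket multiplying $\frac{b\llbracket u_{e-1}\rrbracket}{2J_{n}^{2}}$ is exactly the expression $\Delta_{r}g_{L}-\nabla_{r}g_{L}(-1)\nabla_{r}h_{L}-\nabla_{r}g_{L}(1)\nabla_{r}h_{R}$ shown to be independent of $\kappa$ in Theorem~\ref{thm:part 1 independent}, whose final form involves only $\eta_{p,c}$. The bracket multiplying $-\frac{b\llbracket u_{e}\rrbracket}{2J_{n}^{2}}$ is the corresponding $g_{R}$-expression, which is independent of $\kappa$ by Remark~\ref{rmk:extension gr}. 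Since the prefactors $\frac{b\llbracket u_{e-1}\rrbracket}{2J_{n}^{2}}$ and $-\frac{b\llbracket u_{e}\rrbracket}{2J_{n}^{2}}$ involve only the solution jumps together with $b$ and $J_{n}$, they carry no $\kappa$ either, so both lines are $\kappa$-independent.

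Assembling the three $\kappa$-independent contributions shows that $\partial u_{n}/\partial t$ is independent of $\kappa$ on every $\Omega_{n}$, i.e. the discrete diffusion equation is independent of $\kappa$. There is really no serious obstacle remaining: the analytic Legendre-polynomial computation has already been absorbed into Theorem~\ref{thm:part 1 independent} and Remark~\ref{rmk:extension gr}. The one point that deserves an explicit word is the verification that the IP and BR2 fluxes do not smuggle $\kappa$ back in through $q^{*}$, which is precisely why the decomposition~\eqref{eq:ini demo} was arranged so as to keep $q^{*}$ unexpanded.
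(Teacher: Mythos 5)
Your proposal is correct and follows essentially the same route as the paper: both read the result off the decomposition in equation~\eqref{eq:ini demo}, observing that the first line is $\kappa$-free (since the IP/BR2 flux $q^{*}$ does not involve $q$) and that the last two lines are handled by Theorem~\ref{thm:part 1 independent} and Remark~\ref{rmk:extension gr}. The only difference is cosmetic: you spell out explicitly why $q^{*}$ carries no $\kappa$, a point the paper establishes in the text preceding the theorem rather than inside the corollary's proof.
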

\begin{proof}
The diffusion equation with the ESFR schemes and employing the IP or the BR2 numerical fluxes is written as equation~\eqref{eq:ini demo}. Only the last two lines of this PDE contain the parameter $\kappa$. However from Theorem~\ref{thm:part 1 independent} and the remark~\ref{rmk:extension gr} the parameter $\kappa$ gets simplified and as a consequence equation~\eqref{eq:ini demo} does not depend on $\kappa$.
\end{proof}
\section{IP Stability Condition}\label{sec:section 1 IP}
\subsection{Theoretical results}

Having introduced the different notations and definitions, we can derive the stability condition for the IP numerical flux using the ESFR class functions. We will, briefly, explain the main result of~\cite{castonguay_energy_2013} and explain how, from this analysis, we can extend the proof. The main difference between the notations of~\cite{castonguay_energy_2013} and ours is that we have dropped the superscript $-^{D}$ which indicated \enquote{discontinuous} quantities. Castonguay et al. derived a stability condition for the LDG numerical flux \cite{castonguay_energy_2013}
by introducing the norms
\begin{eqnarray}\label{eq:norm ESFR}
\Vert U \Vert _{p,c}&=&
\displaystyle{
\left\lbrace\sum_{n=1}^{N_{K}}\int_{x_{n}}^{x_{n+1}}\left[\left(u_{n}\right)^{2}+\dfrac{c}{2}\left(J_{n}\right)^{2p}\left(\dfrac{\partial^{p}u_{n}}{\partial x^{p}}\right)^{2}\right]\mbox{d}x\right\rbrace^{1/2}
}
\end{eqnarray}
and
\begin{eqnarray}
\Vert Q\Vert_{p,\kappa}&=&\displaystyle{
\left\lbrace\sum_{n=1}^{N_{K}}\int_{x_{n}}^{x_{n+1}}\left[\left(q_{n}\right)^{2}+\dfrac{\kappa}{2}\left(J_{n}\right)^{2p}\left(\dfrac{\partial^{p}q_{n}}{\partial x^{p}}\right)^{2}\right]\mbox{d}x\right\rbrace^{1/2}
}.
\end{eqnarray}

These norms are used to obtain particular properties of the correction functions $g$ and $h$ and therefore simplify drastically the equations of stability. After applying the simplifications, they showed for the LDG case (Equation (77), \cite{castonguay_energy_2013}) that
\begin{equation}\label{eq:Result LDG}
\displaystyle{
\dfrac{1}{2}\dfrac{\mbox{d}}{\mbox{d}t}\Vert U \Vert^{2}_{p,c}=-b\Vert Q \Vert^{2}_{p,\kappa}-\sum_{e=1}^{N_{e}}\left[\dfrac{\lambda}{2}|a|\left(u_{e,+}-u_{e,-}\right)^{2}+\tau\left(u_{e,+}-u_{e,-}\right)^{2}\right]
},
\end{equation}
where, $N_{e}$ is the number of edges, $\lambda$ is the parameter of the Lax-Friedrichs numerical flux and $a$ is the velocity of the advection.

The conclusion of their stability proof is that for all $\lambda\geq 0$ and $\tau \geq 0$,  
\begin{equation}
\displaystyle{
\dfrac{1}{2}\dfrac{\mbox{d}}{\mbox{d}t}\Vert U \Vert^{2}_{p,c}\leq 0
}.
\end{equation}

From this final result~\eqref{eq:Result LDG}, it is important to notice that the advective part, represented by $\frac{\lambda}{2}|a|\left(u_{e,+}-u_{e,-}\right)^{2}$, is independent from the diffusive part $\tau\left(u_{e,+}-u_{e,-}\right)^{2}$. Therefore, by taking any other diffusive scheme, the advective part will not change. In order, for our proof, to be as concise as possible, we will not consider the advective part (as it will not differ from the article of Castonguay et al.). Due to the similarities with~\cite{castonguay_energy_2013}, the authors have chosen not to re-derive every step of the derivations. We will start from a particular equation of the aforementioned article. To have a complete understanding of the proof, the authors strongly recommend to read article \cite{castonguay_energy_2013} (up to Equation (69)), which has been derived for any diffusive numerical flux.

Upon removing the advective term of (Equation (69) of \cite{castonguay_energy_2013}) we have,
\begin{equation}\label{eq:Equation69}
\displaystyle{
\dfrac{1}{2}\dfrac{\mbox{d}}{\mbox{d}t}\Vert U\Vert^{2}_{p,c}=-b\Vert Q\Vert_{p,\kappa}^{2}+b\sum_{e=1}^{N_{e}}\Theta_{e}
},
\end{equation}
where $\Theta_{e}=\left[
\left(u_{e,+}q_{e,+}-u_{e,-}q_{e,-}\right)
-\left(q\right)^{*}_{e}\left(u_{e,+}-u_{e,-}\right)
-\left(u\right)^{*}_{e}\left(q_{e,+}-q_{e,-}\right)
\right]$. Since the diffusive parameter $b$ is a positive constant and multiplies every term, we have factored it out from $\Theta_{e}$. This is a slight difference from the article~\cite{castonguay_energy_2013} in Eq. (75) where $b$ does not multiply the penalty term. The choice made does not impact the result of this article.

\begin{lem}
Employing the ESFR correction functions and the IP scheme, the following equality holds\begin{equation}\label{eq:last exact form}
\Theta_{e}=-\llbracket u_{e} \rrbracket^{2}\left[\tau+\dfrac{1}{4}\nabla_{r}g_{L}\left(-1\right)\left(\dfrac{1}{J_{n}}+\dfrac{1}{J_{n+1}}\right)\right]-\dfrac{1}{4}\nabla_{r}g_{L}\left(1\right)\llbracket u_{e} \rrbracket\left(\dfrac{1}{J_{n}}\llbracket u_{e-1} \rrbracket+\dfrac{1}{J_{n+1}}\llbracket u_{e+1} \rrbracket\right).
\end{equation}
\end{lem}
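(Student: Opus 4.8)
The plan is to substitute the IP numerical fluxes directly into the definition of $\Theta_{e}$ and then to peel off the penalty term, reducing the problem to evaluating the mismatch between the raw gradient and the corrected gradient at the interface. First I would rewrite the bilinear product using the splitting $a_{e,\pm}=\left\lbrace\left\lbrace a_{e}\right\rbrace\right\rbrace\mp\tfrac{1}{2}\llbracket a_{e}\rrbracket$, which gives the identity $u_{e,+}q_{e,+}-u_{e,-}q_{e,-}=-\left\lbrace\left\lbrace u_{e}\right\rbrace\right\rbrace\llbracket q_{e}\rrbracket-\llbracket u_{e}\rrbracket\left\lbrace\left\lbrace q_{e}\right\rbrace\right\rbrace$, together with the elementary relations $u_{e,+}-u_{e,-}=-\llbracket u_{e}\rrbracket$ and $q_{e,+}-q_{e,-}=-\llbracket q_{e}\rrbracket$. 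Inserting $u^{*}_{e}=\left\lbrace\left\lbrace u_{e}\right\rbrace\right\rbrace$, the contribution $-u^{*}_{e}(q_{e,+}-q_{e,-})=\left\lbrace\left\lbrace u_{e}\right\rbrace\right\rbrace\llbracket q_{e}\rrbracket$ cancels exactly against the $-\left\lbrace\left\lbrace u_{e}\right\rbrace\right\rbrace\llbracket q_{e}\rrbracket$ produced above. This cancellation, special to the choice $u^{*}=\left\lbrace\left\lbrace u\right\rbrace\right\rbrace$ shared by IP and BR2, collapses $\Theta_{e}$ to the single term $\Theta_{e}=\llbracket u_{e}\rrbracket\left(q^{*}_{e}-\left\lbrace\left\lbrace q_{e}\right\rbrace\right\rbrace\right)$.

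Next I would substitute the IP flux $q^{*}_{e}=\left\lbrace\left\lbrace\nabla u\right\rbrace\right\rbrace_{e}-\tau\llbracket u_{e}\rrbracket$, so that $\Theta_{e}=-\tau\llbracket u_{e}\rrbracket^{2}+\llbracket u_{e}\rrbracket\left(\left\lbrace\left\lbrace\nabla u\right\rbrace\right\rbrace_{e}-\left\lbrace\left\lbrace q_{e}\right\rbrace\right\rbrace\right)$. The first piece is already the penalty contribution appearing in~\eqref{eq:last exact form}, so the whole task reduces to computing the average difference between the raw gradient $\left\lbrace\left\lbrace\nabla u\right\rbrace\right\rbrace_{e}$ and the corrected gradient $\left\lbrace\left\lbrace q_{e}\right\rbrace\right\rbrace$. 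For this I would take the interface traces of the auxiliary equation~\eqref{eq:numerical aux bis}: evaluating $q_{n}$ at $r=1$ for the minus side and $q_{n+1}$ at $r=-1$ for the plus side (recalling that the left and right edges of $\Omega_{n+1}$ carry the jumps $\llbracket u_{e}\rrbracket$ and $\llbracket u_{e+1}\rrbracket$). Since $\tfrac{1}{J}\nabla_{r}u$ is exactly the raw physical gradient, subtracting it removes the $\nabla_{r}u$ term and leaves only the $g$-correction contributions, namely $\tfrac{1}{2J_{n}}\left(\llbracket u_{e-1}\rrbracket\nabla_{r}g_{L}(1)-\llbracket u_{e}\rrbracket\nabla_{r}g_{R}(1)\right)$ on the minus side and the analogue with $J_{n+1}$, $\llbracket u_{e}\rrbracket$, $\llbracket u_{e+1}\rrbracket$ on the plus side.

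Finally I would invoke the reflection symmetry of the ESFR correction functions, $g_{R}(r)=g_{L}(-r)$ (consistent with~\eqref{eq:properties correction functions 1}--\eqref{eq:properties correction functions 2}), which yields $\nabla_{r}g_{R}(1)=-\nabla_{r}g_{L}(-1)$ and $\nabla_{r}g_{R}(-1)=-\nabla_{r}g_{L}(1)$, to rewrite both traces through $\nabla_{r}g_{L}(-1)$ and $\nabla_{r}g_{L}(1)$ only. Averaging the two traces and multiplying by $\llbracket u_{e}\rrbracket$, the coefficients of $\nabla_{r}g_{L}(-1)$ gather the two $\llbracket u_{e}\rrbracket^{2}$ terms into $\tfrac{1}{4}\nabla_{r}g_{L}(-1)\left(\tfrac{1}{J_{n}}+\tfrac{1}{J_{n+1}}\right)\llbracket u_{e}\rrbracket^{2}$, while the coefficients of $\nabla_{r}g_{L}(1)$ gather the neighbour jumps into $\tfrac{1}{4}\nabla_{r}g_{L}(1)\llbracket u_{e}\rrbracket\left(\tfrac{1}{J_{n}}\llbracket u_{e-1}\rrbracket+\tfrac{1}{J_{n+1}}\llbracket u_{e+1}\rrbracket\right)$. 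Combining these with the penalty term recovers~\eqref{eq:last exact form}.

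The step I expect to be the main obstacle is the sign- and index-bookkeeping in this last stage: correctly attributing each jump ($\llbracket u_{e-1}\rrbracket$, $\llbracket u_{e}\rrbracket$, $\llbracket u_{e+1}\rrbracket$) to the right element and correction function, and applying the $g_{L}/g_{R}$ symmetry with the correct signs so that the local jump accumulates on $\nabla_{r}g_{L}(-1)$ while the two neighbour jumps land on $\nabla_{r}g_{L}(1)$. A conceptual point worth stating explicitly along the way is that in $\Theta_{e}$ the traces $q_{e,\pm}$ are the \emph{corrected} gradients carrying the $g$-corrections, whereas the IP flux $q^{*}_{e}$ uses only the \emph{raw} average gradient $\left\lbrace\left\lbrace\nabla u\right\rbrace\right\rbrace_{e}$; it is precisely their difference that generates the correction-function terms in the final identity.
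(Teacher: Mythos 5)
Your proposal is correct and follows essentially the same route as the paper's own proof: both reduce $\Theta_{e}$ via the IP fluxes to $-\llbracket u_{e}\rrbracket\left\lbrace\left\lbrace q_{e}-\nabla u_{e}\right\rbrace\right\rbrace-\tau\llbracket u_{e}\rrbracket^{2}$, evaluate the traces $q_{e,\pm}$ from the auxiliary equation so that only the $g$-correction terms survive, and use the relation $\nabla_{r}g_{R}\left(\pm 1\right)=-\nabla_{r}g_{L}\left(\mp 1\right)$ (the paper's Theorem~\ref{thm:correction function}) to collect the local jump onto $\nabla_{r}g_{L}\left(-1\right)$ and the neighbour jumps onto $\nabla_{r}g_{L}\left(1\right)$. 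The only cosmetic difference is that you obtain the intermediate cancellation in one step through the mean/jump product identity, whereas the paper expands the products term by term.
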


\begin{proof}
We expand the expression for $\Theta_{e}$ by employing the IP numerical fluxes defined in~\eqref{eq:numerical fluxes},\begin{equation}
\Theta_{e}=\left(u_{e,+}q_{e,+}-u_{e,-}q_{e,-}\right)
-\left(\left\lbrace\left\lbrace \nabla u_{e}\right\rbrace\right\rbrace -\tau\llbracket u_{e} \rrbracket\right)\left(u_{e,+}-u_{e,-}\right)
-\left(\left\lbrace\left\lbrace u_{e}\right\rbrace\right\rbrace \right)\left(q_{e,+}-q_{e,-}\right).
\end{equation}
By expanding the mean value, defined in~\eqref{eq:mean} and using the definition of the jump from~\eqref{eq:jump}, we obtain
\begin{equation}
\begin{array}{lll}
\Theta_{e}&=&\left(u_{e,+}q_{e,+}-u_{e,-}q_{e,-}\right)
+\left(\left\lbrace\left\lbrace \nabla u_{e}\right\rbrace\right\rbrace -\tau\llbracket u_{e} \rrbracket\right)\llbracket u_{e} \rrbracket
-\frac{1}{2}\left(u_{e,+}+u_{e,-}\right)\left(q_{e,+}-q_{e,-}\right)\\
&=& u_{e,+} q_{e,+}\left(1-\frac{1}{2}\right)\\
&+&u_{e,+} q_{e,-}\left(\frac{1}{2}\right)\\
&+& u_{e,-} q_{e,-}\left(-1+\frac{1}{2}\right)\\
&+& u_{e,-} q_{e,+}\left(-\frac{1}{2}\right)\\
&+&\left(\left\lbrace\left\lbrace \nabla u_{e}\right\rbrace\right\rbrace -\tau\llbracket u_{e} \rrbracket\right)\llbracket u_{e} \rrbracket.
\end{array}
\end{equation}
Using, once more the definition of the mean value and the jump, we reduce the expression to
\begin{equation}
\begin{array}{lll}
\Theta_{e}&=& u_{e,+}\left\lbrace\left\lbrace q_{e}\right\rbrace\right\rbrace -u_{e,-}\left\lbrace\left\lbrace q_{e}\right\rbrace\right\rbrace +\left\lbrace\left\lbrace \nabla u_{e}\right\rbrace\right\rbrace \llbracket u_{e} \rrbracket-\tau\llbracket u_{e} \rrbracket^{2}\\
&=&-\llbracket u_{e} \rrbracket \left\lbrace\left\lbrace  q_{e} -\nabla u_{e} \right\rbrace\right\rbrace  -\tau \llbracket u_{e} \rrbracket^{2}.\label{eq:jump q nabla 2}
\end{array}
\end{equation}

In order to simplify equation~\eqref{eq:jump q nabla 2}, we write the auxiliary variable (equation~\eqref{eq:numerical aux}), for an element $\Omega_{n}$, in terms of $u$,
\begin{equation}
q_{n}=\nabla u_{n} +\dfrac{1}{J_{n}}\left(u^{*}_{e-1}-u_{e-1,+}\right) \nabla_{r}g_{L} +\dfrac{1}{J_{n}}\left(u^{*}_{e}-u_{e,-}\right) \nabla_{r}g_{R},
\end{equation}
where $q$ and $\nabla u$ are in the physical space, while $\nabla_{r}g_{L}$ and $\nabla_{r}g_{R}$ are in the computational space. Therefore, referring to Figure~\ref{fig:edges_elements}, we have
\begin{empheq}[left=\empheqlbrace]{align}
q_{e,+}&=\nabla u_{e,+}+\dfrac{1}{J_{n+1}}\left(u^{*}_{e}-u_{e,+}\right) \nabla_{r}g_{L}\left(-1\right) +\dfrac{1}{J_{n+
1}}\left(u^{*}_{e+1}-u_{e+1,-}\right) \nabla_{r}g_{R}\left(-1\right), \vspace{0.2cm} \\
q_{e,-}&=\nabla u_{e,-}+\dfrac{1}{J_{n}}\left(u^{*}_{e-1}-u_{e-1,+}\right) \nabla_{r}g_{L}\left(1\right) +\dfrac{1}{J_{n}}\left(u^{*}_{e}-u_{e,-}\right) \nabla_{r}g_{R}\left(1\right).
\end{empheq}
Using the definition of the numerical flux of the IP scheme~\eqref{eq:numerical fluxes} and using the definition of the jump~\eqref{eq:jump}, we obtain

\begin{empheq}[left=\empheqlbrace]{align}
q_{e,+}&=\nabla u_{e,+}+\dfrac{1}{2J_{n+
1}}\llbracket u_{e} \rrbracket \nabla_{r}g_{L}\left(-1\right)-\dfrac{1}{2J_{n+
1}}\llbracket u_{e+1} \rrbracket \nabla_{r}g_{R}\left(-1\right), \vspace{0.2cm} \\
q_{e,-}&=\nabla u_{e,-}+\dfrac{1}{2J_{n}}\llbracket u_{e-1} \rrbracket \nabla_{r}g_{L}\left(1\right)-\dfrac{1}{2J_{n}}\llbracket u_{e} \rrbracket \nabla_{r}g_{R}\left(1\right).
\end{empheq}
Therefore, in equation~\eqref{eq:jump q nabla 2} the gradient of the solution is simplified and we acquire
\begin{equation}\label{eq:Common point BR2 IP}
\Theta_{e}=-\frac{1}{4}\llbracket u_{e} \rrbracket\left[\llbracket u_{e} \rrbracket\left(\dfrac{1}{J_{n+1}}\nabla_{r}g_{L}\left(-1\right)-\dfrac{1}{J_{n}}\nabla_{r}g_{R}\left(1\right)\right)+\dfrac{1}{J_{n}}\llbracket u_{e-1} \rrbracket \nabla_{r}g_{L}\left(1\right) -\dfrac{1}{J_{n+1}}\llbracket u_{e+1} \rrbracket \nabla_{r}g_{R}\left(-1\right)\right]-\tau \llbracket u_{e} \rrbracket^{2}.
\end{equation}

We finally use Theorem~\ref{thm:correction function} and replace $\nabla_{r}g_{R}\left(1\right)$ and $\nabla_{r}g_{R}\left(-1\right)$ respectively by $-\nabla_{r}g_{L}\left(-1\right)$ and $-\nabla_{r}g_{L}\left(1\right)$ to obtain equation~\eqref{eq:last exact form}
\begin{equation*}
\Theta_{e}=-\llbracket u_{e} \rrbracket^{2}\left[\tau+\dfrac{1}{4}\nabla_{r}g_{L}\left(-1\right)\left(\dfrac{1}{J_{n}}+\dfrac{1}{J_{n+1}}\right)\right]-\dfrac{1}{4}\nabla_{r}g_{L}\left(1\right)\llbracket u_{e} \rrbracket\left(\dfrac{1}{J_{n}}\llbracket u_{e-1} \rrbracket+\dfrac{1}{J_{n+1}}\llbracket u_{e+1} \rrbracket\right).
\end{equation*}
\end{proof}

\begin{rmk}
We clearly see from~\eqref{eq:jump q nabla 2} that if we had taken the LDG scheme then $\nabla u_{e}$ would have been replaced by $q_{e}$. We would have then recovered the result of~\cite{castonguay_energy_2013}: $\Theta_{e}=-\tau\llbracket u_{e} \rrbracket^{2}$.
\end{rmk}

\begin{thm}\label{thm:stability IP}
Employing the IP scheme for the diffusion equation with the ESFR methods, $\tau$ greater than $\tau^{*}$ implies the energy stability, with
\normalfont
\begin{equation}
\tau^{*}=\frac{1}{2}\max\limits_{n\in\mathcal{T}_{h}}\left(\frac{1}{J_{n}}\right)\min\limits_{\kappa}\left(\left| \nabla_{r}g_{L}\left(1\right)\right|-\nabla_{r}g_{L}\left(-1\right)\right) \implies \dfrac{\mbox{d}}{\mbox{d}t}\Vert U\Vert_{p,c}^{2}\leq 0.
\end{equation}
\end{thm}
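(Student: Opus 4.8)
The plan is to start from the energy identity~\eqref{eq:Equation69} and show that the hypothesis $\tau\geq\tau^{*}$ forces the boundary sum $\sum_{e}\Theta_{e}$ to be nonpositive; since $-b\Vert Q\Vert_{p,\kappa}^{2}\leq 0$ for $b>0$ and admissible $\kappa\geq 0$, this immediately gives $\frac{d}{dt}\Vert U\Vert_{p,c}^{2}\leq 0$. The whole difficulty is therefore concentrated in bounding $\sum_{e}\Theta_{e}$ from above by zero, starting from the per-edge expression~\eqref{eq:last exact form} supplied by the previous lemma.

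First I would substitute~\eqref{eq:last exact form} into $\sum_{e}\Theta_{e}$ and separate a diagonal part, proportional to $\nabla_{r}g_{L}(-1)$ and to $\tau$, from an off-diagonal part proportional to $\nabla_{r}g_{L}(1)$ that couples each jump $\llbracket u_{e}\rrbracket$ to its neighbours $\llbracket u_{e-1}\rrbracket$ and $\llbracket u_{e+1}\rrbracket$. The key bookkeeping step is to reindex the edge sum as a sum over elements: writing $L_{n}$ and $R_{n}$ for the jumps on the left and right faces of $\Omega_{n}$ (so that $R_{n}=L_{n+1}$), every cross product $\frac{1}{J_{n}}L_{n}R_{n}$ is produced exactly twice, yielding on a periodic domain $\sum_{e}\Theta_{e}=-\tau\sum_{e}\llbracket u_{e}\rrbracket^{2}-\sum_{n}\frac{1}{J_{n}}\big[\frac{1}{4}\nabla_{r}g_{L}(-1)(L_{n}^{2}+R_{n}^{2})+\frac{1}{2}\nabla_{r}g_{L}(1)L_{n}R_{n}\big]$. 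Verifying this reorganisation and checking that boundary faces do not spoil it is the step I expect to be the most delicate.

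Next I would observe that the bracketed per-element expression is the quadratic form attached to the symmetric matrix $\tfrac14\big(\begin{smallmatrix}\nabla_{r}g_{L}(-1)&\nabla_{r}g_{L}(1)\\ \nabla_{r}g_{L}(1)&\nabla_{r}g_{L}(-1)\end{smallmatrix}\big)$, whose smallest eigenvalue is $\tfrac14\big(\nabla_{r}g_{L}(-1)-|\nabla_{r}g_{L}(1)|\big)$; equivalently one applies Young's inequality $|L_{n}R_{n}|\leq\tfrac12(L_{n}^{2}+R_{n}^{2})$. This gives $\frac{1}{J_{n}}[\cdots]\geq \frac{1}{4J_{n}}(\nabla_{r}g_{L}(-1)-|\nabla_{r}g_{L}(1)|)(L_{n}^{2}+R_{n}^{2})$, and after converting $\sum_{n}\frac{1}{J_{n}}(L_{n}^{2}+R_{n}^{2})=\sum_{e}\llbracket u_{e}\rrbracket^{2}(\frac{1}{J_{n}}+\frac{1}{J_{n+1}})$ back to edges and using $\frac{1}{J_{n}}+\frac{1}{J_{n+1}}\leq 2\max_{n}(1/J_{n})$, I obtain $\sum_{e}\Theta_{e}\leq -\big[\tau-\tfrac12\max_{n}(1/J_{n})(|\nabla_{r}g_{L}(1)|-\nabla_{r}g_{L}(-1))\big]\sum_{e}\llbracket u_{e}\rrbracket^{2}$. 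Hence the right-hand side is nonpositive as soon as $\tau$ exceeds the $\kappa$-dependent threshold $\tfrac12\max_{n}(1/J_{n})(|\nabla_{r}g_{L}(1)|-\nabla_{r}g_{L}(-1))$.

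Finally, because Corollary~\ref{cor:independent} guarantees that the scheme, and hence $\frac{d}{dt}\Vert U\Vert_{p,c}^{2}$, does not depend on $\kappa$, the identity~\eqref{eq:Equation69} together with the bound above holds simultaneously for every admissible $\kappa\geq 0$; the least restrictive sufficient condition is obtained by minimising the threshold over $\kappa$, which is precisely $\tau^{*}$. I would close by remarking that the sign of the smallest eigenvalue decides whether a positive penalty is genuinely required: when $\nabla_{r}g_{L}(-1)-|\nabla_{r}g_{L}(1)|\geq 0$ the per-element form is already nonnegative and $\tau\geq 0$ suffices, whereas the interesting regime is the opposite one, where the penalty must compensate the indefinite coupling term. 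Beyond the reindexing, the main obstacle is justifying the $\min_{\kappa}$ reduction cleanly, since the decomposition of the energy into $-b\Vert Q\Vert_{p,\kappa}^{2}$ and $b\sum_{e}\Theta_{e}$ is itself $\kappa$-dependent even though its sum is not.
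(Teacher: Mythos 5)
Your proposal is correct and reaches the paper's bound by the same overall strategy: start from the energy identity \eqref{eq:Equation69}, insert the per-edge expression \eqref{eq:last exact form}, control the cross terms $\llbracket u_{e}\rrbracket\llbracket u_{e\pm1}\rrbracket$ by Young's inequality, bound the Jacobian factors by $\max_{n}(1/J_{n})$, and invoke Corollary~\ref{cor:independent} to minimize the resulting threshold over $\kappa$. Where you differ is the bookkeeping of the middle step. The paper stays edge-based: it applies $2ab\leq a^{2}+b^{2}$ to each $\Theta_{e}$ separately, coarsens all Jacobian weights to $J_{e,min}=\min(J_{n},J_{n+1})$ immediately, and then argues that $\Theta_{e-1}$ and $\Theta_{e+1}$ each donate a $\frac{1}{8J_{e,min}}\left|\nabla_{r}g_{L}(1)\right|\llbracket u_{e}\rrbracket^{2}$ contribution when the edge sum is assembled. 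You instead reindex the whole sum over elements, so the couplings collect into the exact per-element quadratic form $\frac{1}{J_{n}}\bigl[\frac{1}{4}\nabla_{r}g_{L}(-1)(L_{n}^{2}+R_{n}^{2})+\frac{1}{2}\nabla_{r}g_{L}(1)L_{n}R_{n}\bigr]$, bounded below by its smallest eigenvalue $\frac{1}{4}\bigl(\nabla_{r}g_{L}(-1)-\left|\nabla_{r}g_{L}(1)\right|\bigr)$. This buys three small things: the exact $1/J_{n}$ weights survive until the very last inequality (the paper's early passage to $J_{e,min}$ silently uses $\nabla_{r}g_{L}(-1)\leq 0$, which your eigenvalue formulation does not need at that stage); the double-counting of the cross terms is transparent rather than asserted; and your closing remark on the sign of $\nabla_{r}g_{L}(-1)-\left|\nabla_{r}g_{L}(1)\right|$ makes explicit a case distinction the paper leaves implicit. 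Your final paragraph also gives the cleanest justification of the $\min_{\kappa}$ step --- the decomposition of $\frac{\mbox{d}}{\mbox{d}t}\Vert U\Vert_{p,c}^{2}$ into $-b\Vert Q\Vert_{p,\kappa}^{2}+b\sum_{e}\Theta_{e}$ is $\kappa$-dependent while the left-hand side is not, so one may select the most favorable decomposition --- which the paper compresses into the single sentence that the stability condition ``shouldn't depend on $\kappa$ either.''
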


\begin{proof}
We apply the triangular inequality ($2ab\leq a^{2}+b^{2}$) with $a=\llbracket u_{e} \rrbracket$ and $b=\llbracket u_{e-1} \rrbracket$ or $b=\llbracket u_{e+1} \rrbracket$ to the last term of equation~\eqref{eq:last exact form}.
We also use the notation $J_{e,min}=\min\left(J_{n},J_{n+1}\right)$. As we do not know the sign of the quantity $\nabla_{r}g_{L}\left(1\right)$, we need to place it within an absolute value and the triangle inequality yields
\begin{equation}
\left\lbrace
\begin{array}{lll}
-\dfrac{1}{4J_{n}}\nabla_{r}g_{L}\left(1\right)\llbracket u_{e} \rrbracket \llbracket u_{e-1} \rrbracket
&\leq &
\dfrac{1}{8J_{e,min}} \left| \nabla_{r}g_{L}\left(1\right)\right| \left( \llbracket u_{e} \rrbracket^{2}+\llbracket u_{e-1} \rrbracket^{2}\right),\vspace{0.2cm}\\
-\dfrac{1}{4J_{n+1}}\nabla_{r}g_{L}\left(1\right)\llbracket u_{e} \rrbracket \llbracket u_{e+1} \rrbracket
&\leq &
\dfrac{1}{8J_{e,min}} \left| \nabla_{r}g_{L}\left(1\right)\right| \left( \llbracket u_{e} \rrbracket^{2}+\llbracket u_{e+1} \rrbracket^{2}\right).\vspace{0.2cm}
\end{array}
\right.
\end{equation}
Introducing these inequalities into~\eqref{eq:last exact form}, we retrieve
\begin{equation}
\Theta_{e}\leq -\llbracket u_{e} \rrbracket^{2}\left[\tau+\dfrac{1}{2J_{e,min}}\nabla_{r}g_{L}\left(-1\right)-\dfrac{1}{4J_{e,min}}\left| \nabla_{r}g_{L}\left(1\right)\right| \right]+\dfrac{1}{8J_{e,min}}\left| \nabla_{r}g_{L}\left(1\right)\right| \left(\llbracket u_{e-1} \rrbracket^{2}+\llbracket u_{e+1} \rrbracket^{2}\right).
\end{equation}
Next, we sum over all the edges, and for simplicity, we consider Dirichlet conditions (periodic boundaries conditions work as well). Therefore, on the first and last elements, the jump associated is equal to 0. We notice that $\Theta_{e}$ depends on the jump on edge $e$, $e+1$ and $e-1$. Therefore $\Theta_{e+1}$ and $\Theta_{e-1}$ will have a contribution of $\frac{1}{8}\left| \nabla_{r}g_{L}\left(1\right)\right|$ to the term $\llbracket u_{e} \rrbracket^{2}$ each. We then finally obtain the stability condition for the IP scheme
\begin{equation}
\displaystyle{
\sum_{e=1}^{N_{e}}\Theta_{e}\leq-\sum_{e=1}^{N_{e}}\llbracket u_{e} \rrbracket^{2}\left[\tau+\dfrac{1}{2J_{e,min}}\left(\nabla_{r}g_{L}\left(-1\right)-\left| \nabla_{r}g_{L}\left(1\right)\right|\right)\right].
}
\end{equation}

We may, thus conclude that to ensure energy stability for the diffusion equation in 1D for the IP scheme we require,
\begin{equation}
\displaystyle{
\tau\geq\tau^{*}=\frac{1}{2}\max_{n\in\mathcal{T}_{h}}\left(\frac{1}{J_{n}}\right)\left(\left| \nabla_{r}g_{L}\left(1\right)\right|-\nabla_{r}g_{L}\left(-1\right)\right) \implies \sum_{e=1}^{N_{e}}\Theta_{e}\leq 0 \implies\dfrac{\mbox{d}}{\mbox{d}t}\Vert U\Vert_{p,c}^{2}\leq 0.
}
\end{equation}

Now from the result of Corollary~\ref{cor:independent}, we know that the ESFR schemes do not depend on $\kappa$. Therefore the stability condition shouldn't depend on $\kappa$ either. We may sharpen the previous bound as,
\begin{equation}\label{eq:criterion IP}
\displaystyle{
\tau\geq\tau^{*}=\frac{1}{2}\max_{n\in\mathcal{T}_{h}}\left(\frac{1}{J_{n}}\right)\min\limits_{\kappa}\left(\left| \nabla_{r}g_{L}\left(1\right)\right|-\nabla_{r}g_{L}\left(-1\right)\right).
}
\end{equation}
\end{proof}

\begin{rmk}
Instead of considering a constant penalty term for all edges, we can find the stable penalty term $\tau_{e}^{*}$, for each edge. In the numerical simulations, $\tau$ would then be an array.

\begin{equation}\label{eq:criterion IP2}
\tau_{e}\geq\tau_{e}^{*}=\frac{1}{2}\left(\dfrac{1}{J_{e,-}}+\dfrac{1}{J_{e,+}}\right)\min\limits_{\kappa}\left(\left| \nabla_{r}g_{L}\left(1\right)\right|-\nabla_{r}g_{L}\left(-1\right)\right),\,\, \forall e \in\llbracket 1,N_{e}\rrbracket \implies \dfrac{\mbox{d}}{\mbox{d}t}\Vert U\Vert_{p,c}^{2}\leq 0.
\end{equation}
\end{rmk}

\begin{rmk}
Trivial derivations showed that $\min\limits_{\kappa}\left(\left| \nabla_{r}g_{L}\left(1\right)\right|-\nabla_{r}g_{L}\left(-1\right)\right)$ is obtained for $\kappa_{min}=\frac{2\left(p+1\right)}{p\left(2p+1\right)\left(a_{p}p!\right)^{2}}$ or any greater value, where $a_{p}$ is given in Appendix~\ref{sec:ESFR correction function}. Numerical simulations were conducted to verify this result and are represented in Figure~\ref{fig:minimum g}. In the rest of the article $\tau_{theory}^{*}$ will be computed with this optimal value.

\begin{figure}[H]
\centering
\includegraphics[width=4in]{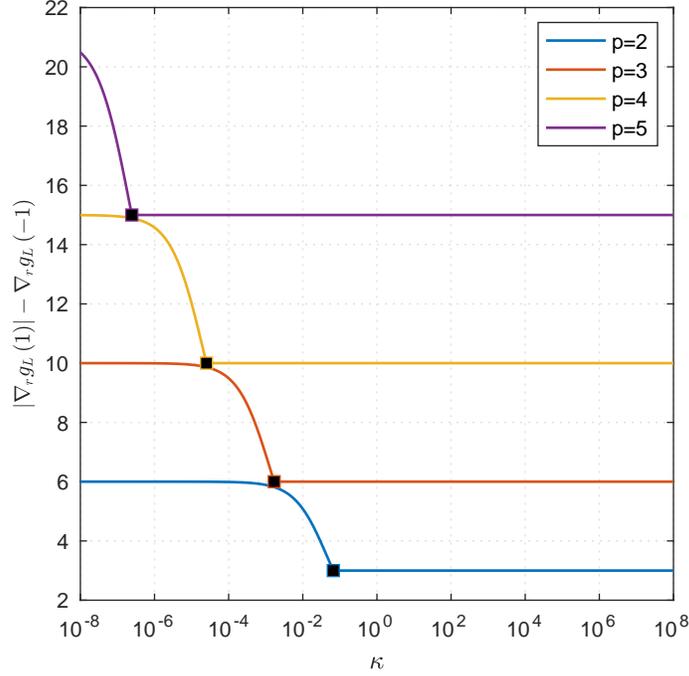}
\caption{$\left| \nabla_{r}g_{L}\left(1\right)\right|-\nabla_{r}g_{L}\left(-1\right)$ for different values of $\kappa$ and $p$. The black squares denotes the value obtained with $\kappa_{min}$.}
\label{fig:minimum g}
\end{figure}
\end{rmk}
\subsection{Numerical results}\label{sec:IP numerical experiment}

Several numerical simulations have been conducted to verify the result~\eqref{eq:criterion IP}. For equally spaced elements $\left(J_{n}=J\right)$ equations \eqref{eq:main eq compu tot} and \eqref{eq:aux eq compu tot} can be written as follows for the IP scheme
\begin{equation}\label{system numeric}
\left\lbrace
\begin{array}{lll}
\dfrac{\partial u}{\partial t}&=&\dfrac{b}{J}\left(\nabla_{r} q+\left.\left(\dfrac{1}{J}\left\lbrace\left\lbrace\nabla_{r} u\right\rbrace\right\rbrace -q-\tau\llbracket u_{e}\rrbracket\right)\right|_{-1}\nabla_{r}h_{L}+\left.\left(\dfrac{1}{J}\left\lbrace\left\lbrace\nabla_{r} u\right\rbrace\right\rbrace -q-\tau\llbracket u_{e}\rrbracket\right)\right|_{1} \nabla_{r}h_{R}\right),\vspace{0.2cm}\\
q&=&\dfrac{1}{J}\left(\nabla_{r}u+\left.\left(\left\lbrace\left\lbrace u\right\rbrace\right\rbrace -u\right)\right|_{-1}\nabla_{r}g_{L}+\left.\left(\left\lbrace\left\lbrace u\right\rbrace\right\rbrace -u\right)\right|_{1}\nabla_{r}g_{R}\right).
\end{array}
\right.
\end{equation}
The purpose of this numerical simulation is to determine, numerically, the minimum penalization term $\tau$ required for stability. The problem solved is the following: find $u\left(x,t\right)$ such that
\begin{equation}\label{eq:PDE}
\left\lbrace
\begin{array}{llll}
\dfrac{\partial u}{\partial t}&=&b\Delta u,&\,\text{for}\, x\in\left[0;2\pi\right]\, \text{and}\, t\in\left[0,2\right],\vspace{0.2cm}\\
u\left(x,0\right)&=&\sin\left(x\right)+\cos\left(x\right).
\end{array}
\right.
\end{equation}
We impose Dirichlet boundary conditions and the exact solution for this system is $u_{exact}=e^{-bt}\left(\sin\left(x\right)+\cos\left(x\right)\right)$.

In order to assess the stability of the numerical experiment, we choose a criterion on the upper bound of the solution such that $\left|u\left(x,t\right)\right|\leq u_{max}$, where $u_{max}$ is an arbitrarily large and problem dependent value. Here $u_{exact}\leq \sqrt{2}$ for all $t$, we hence choose $u_{max}=2$, i.e, sufficiently greater than $\sqrt{2}$, for the differences to be caused by numerical instabilities arising from the scheme. The procedure is as follows:

%\begin{enumerate}
%\item Initialize $\mbox{d}\tau_{i}=1$ with $i=0$.
%\item Initialize $\tau_{j}$ with $j=0$ to be sufficiently low such that the first iteration is unstable. For the IP scheme, a negative value is sufficient (which is not the case for LDG: a negative penalty term may still provide a stable scheme)
%\begin{enumerate}[label*=\arabic*.]
%\item Compute $u\left(x,t\right)$ for $x\in\left[0,2\pi\right]$ and $t\in\left[0,2\right]$.
%\item If [$\max\left|u\left(x,t\right)\right|\geq u_{max}$], then $\tau_{j+1}=\tau_{j}+\mbox{d}\tau_{i}$ and $j=j+1$.
%\item Else $\tau_{j+1}=\tau_{j}-\mbox{d}\tau_{i}$, $\mbox{d}\tau_{i}=\mbox{d}\tau_{i}/10$ and $i=i+1$.
%\end{enumerate}
%\item If [$i<3$] goto Step 2.1
%\item Else end (we reached a precision of $\mbox{d}\tau_{2}=0.01$).
%\end{enumerate}

\begin{algorithm}[H]
\SetAlgoLined
\KwResult{Obtain $\tau_{numerical}^{*}$ with a precision of 0.01}
$i\leftarrow 0$\;
$\mbox{d}\tau_{i}\leftarrow 1$\;
$j\leftarrow 0$\;
$\tau_{j}\leftarrow\tau$\;
\tcc{Choose $\tau$ to be sufficiently low such that the first iteration is unstable. For the IP scheme, a negative value is sufficient (which is not the case for LDG: a negative penalty term may still provide a stable scheme).}
	\While{$i<3$}{
		Compute $u\left(x,t\right)$ for $x\in\left[0,2\pi\right]$ and $t\in\left[0,2\right]$. The final time $t=2$ is chosen to ensure that a few thousands iterations are run\;
		\eIf{$\max\left|u\left(x,t\right)\right|\geq u_{max}$}{
		$\tau_{j+1}\leftarrow\tau_{j}+\mbox{d}\tau_{i}$\;
		$j\leftarrow j+1$\;
		}{
		$\tau_{j+1}\leftarrow\tau_{j}-\mbox{d}\tau_{i}$\;
		$\mbox{d}\tau_{i}\leftarrow\mbox{d}\tau_{i}/10$\;
		$i\leftarrow i+1$\;
		}
	}
	\caption{Minimal value of $\tau$ ($\tau_{numerical}^{*}$) to be stable}
	\label{alg:algorithm IP}	
\end{algorithm}

Through this Algorithm~\ref{alg:algorithm IP} we achieve the minimal numerical penalization term: $\tau_{numerical}^{*}$ with a precision of $\mbox{d}\tau_{2}=0.01$ for which we observe convergence. It is generally accepted that for $\tau\geq\tau_{numerical}^{*}$ we have stability but we may lose the performance of the method (conditioning of the matrix~\cite{shahbazi_explicit_2005} and increased maximal time step Section~\ref{sec:Von neumann analysis}), if $\tau$ is too large.

We have several choices of parameters to conduct the simulations. For the results presented, we choose a diffusive parameter $b=1$. The domain was discretized into $N_{K}=32$ equally-distributed elements ($J$ is a constant throughout the domain). The time resolution solver is a five stage, fourth order Runge-Kutta method (RK54 introduced by Carpenter and Kennedy \cite{carpenter_fourth-order_1994}). Since the purpose of this study is to find the minimal penalty term $\tau$, we cannot immediately conduct a von Neumann analysis to find the maximal time step $\Delta t_{max}$. Therefore, we select a time step recommended by Hesthaven and Warburton \cite{NDG}:
$\Delta t=\frac{1}{b}CFL\min\left(\Delta x\right)^{2}$ (in our cases $CFL=0.05$). In Section~\ref{sec:Von neumann analysis}, we conduct a von Neumann analysis and find the maximum time step associated to a value of $\tau$.

We conduct several experiments with the degree of the interpolation polynomial $p$ as a parameter and employ the LGL points.

\begin{table}[H]
\centering
\makebox[\textwidth][c]{
\begin{tabular}{|c||c|c|c|c|c||c|c|c|c|c|}
\hline
$p$&\multicolumn{5}{c||}{2}&\multicolumn{5}{c|}{3}\\
\hline
\bf \backslashbox{$c$}{$\kappa$}&$\kappa_{DG}$&$\kappa_{SD}$&$\kappa_{Hu}$&$\kappa_{+}$&$10^{5}$&$\kappa_{DG}$&$\kappa_{SD}$&$\kappa_{Hu}$&$\kappa_{+}$&$10^{5}$ \\
\hline
$\tau_{theory}^{*}$&15.28&15.28&15.28&15.28&15.28&30.56&30.56&30.56&30.56&30.56\\
\hline
\hline
$c_{DG}$&15.21&15.21&15.21&15.21&15.21&30.49&30.49&30.49&30.49&30.49\\
\hline
$c_{SD}$&15.18&15.18&15.18&15.18&15.18&30.46&30.46&30.46&30.46&30.46\\
\hline
$c_{HU}$&15.12&15.12&15.12&15.12&15.12&30.43&30.43&30.43&30.43&30.43\\
\hline
$c_{+}$&14.97&14.97&14.68&14.97&14.97&30.35&30.35&30.35&30.35&30.35\\
\hline
$10^{5}$&5.02&5.02&5.02&5.02&5.02&15.21&15.21&15.21&15.21&15.21\\
\hline
\end{tabular}
}
\caption{$\tau_{theory}^{*}$ compared to $\tau_{numerical}^{*}$ for the IP scheme for p=2 and p=3 for 32 elements.}
\label{tab:result IP Np=3}
\end{table}

As we observed in~\eqref{eq:criterion IP} $\tau_{theory}^{*}$ depends only on the Jacobian $J$ and on the degree of polynomial $p$. Hence it is independent of $\kappa$ and $c$. The first line of entries in Table~\ref{tab:result IP Np=3} corresponds to these theoretical values. The rest of the values corresponds to the numerical values obtained via Algorithm~\ref{alg:algorithm IP}. Two main observations can be made concerning the results in Table~\ref{tab:result IP Np=3}. First, we discern that the relationship $\tau_{theory}^{*}\geq\tau_{numerical}^{*}$ is respected. If we take any value of $\tau$ greater than the theoretical result, then we are guaranteed to achieve convergence. Second, as $c$ decreases, $\tau_{numerical}^{*}$ approaches $\tau_{theory}^{*}$. The fact that $\tau_{numerical}^{*}$ is less than our theoretical results comes from our stability condition. We choose $\tau_{theory}^{*}$ such that $\sum_{e=}^{N_{e}}\Theta_{e}\leq 0$ in order to ensure: $\dfrac{\mbox{d}}{\mbox{d}t}\Vert U\Vert_{p,c}^{2}\leq -b\Vert Q \Vert _{p,\kappa}^{2}+b\sum_{e=1}^{N_{e}}\Theta_{e}\leq 0$. In fact, a sufficient criterion for stability is just $\dfrac{\mbox{d}}{\mbox{d}t}\Vert U\Vert_{p,c}^{2} \leq 0$.

Therefore, to improve our results, we could have looked for a bound for $\tau$ such that
\begin{equation}
\displaystyle{
\sum_{e=1}^{N_{e}}\Theta_{e}\leq \Vert Q\Vert_{p,\kappa}^{2}
}.
\end{equation}
The result, $\tau_{theory}^{*}\geq\tau_{numerical}^{*}$, confirms that there is, yet to find, a sharper inequality for $\tau$. Moreover the norm of $u$ chosen in~\eqref{eq:norm ESFR} is stronger than just the energy stability. Indeed proving $\dfrac{\mbox{d}}{\mbox{d}t}\Vert u\Vert_{0,\Omega}^{2}\leq 0$ is sufficient to ensure stability.
\section{BR2 stability condition}\label{sec:section 2 BR2}
The purpose of this section is to prove the equivalence between the IP and the BR2 numerical fluxes in one-dimensional problems. This result was suggested by Wang et al~\cite{hybrid_meshes}.
\subsection{Numerical insight of the lifting operator}\label{sec:numerical BR2}

We derive a BR2 stability condition through a similar approach. Recall from~\eqref{eq:numerical fluxes} that the numerical fluxes are
\begin{equation}\label{eq:BR2 numerical flux}
\left\lbrace
\begin{array}{lll}
u^{*}_{e}&=&\left\lbrace\left\lbrace u_{e}\right\rbrace\right\rbrace,\vspace{0.1cm}\\
q^{*}_{e}&=&\left\lbrace\left\lbrace \nabla u\right\rbrace\right\rbrace +s\, \left\lbrace\left\lbrace r^{e}\left(\llbracket u\rrbracket\right)\right\rbrace\right\rbrace,
\end{array}
\right.
\end{equation}
where the lifting operator $r^{e}$ is defined in 1D in~\eqref{eq:def re}.

Throughout this article, a bold uppercase character, $\mathbf{A}$, signifies a matrix, a bold lowercase character, $\mathbf{a}$, denotes a column vector. With equation~\eqref{eq:def re}, we see that there exists a link between the BR2 and IP schemes. The following presents a numerical formulation such that: $\left\lbrace\left\lbrace r^{e}\left(\llbracket u\rrbracket\right)\right\rbrace\right\rbrace=-f\left(p,J_{n},J_{n+1}\right)\llbracket u_{e}\rrbracket$ with $f$ a scalar function depending on $p$, the degree of the polynomial, $J_{n}$ and $J_{n+1}$, the Jacobians associated respectively to elements $\Omega_{n}$ and $\Omega_{n+1}$. Numerically, we can introduce the definition of the lifting operator in~\eqref{eq:def re} through the relationship,
\begin{equation}
\mathbf{M}\,\mathbf{\hat{r}}^{e}\left(\llbracket u\rrbracket\right)=-\Phi \llbracket u_{e}\rrbracket.
\end{equation}
We can retrieve the coefficients of the correction term as
 \begin{equation}\label{eq:invert mass}
\mathbf{\hat{r}}^{e}\left(\llbracket u\rrbracket\right)=-\mathbf{M}^{-1}\Phi \llbracket u_{e}\rrbracket,
\end{equation}
where, $\mathbf{M}$ denotes the mass matrix of the entire domain of size $N_{K}N_{p}\times N_{K}N_{p}$. We have: $m_{ij}=\int_{\Omega_{n}}\phi_{i}\phi_{j}\mbox{d}x$, $\mathbf{\hat{r}}^{e}\left(\llbracket u\rrbracket\right)$ is the column vector of the correction term values, $r^{e}\left(\llbracket u\rrbracket\right)$. $\Phi$ defines the column vector associated with $\left\lbrace\left\lbrace \phi\right\rbrace\right\rbrace _{e}$. Therefore this vector has for support the elements which have the edge $e$: $Supp\left(\Phi\right)=\Omega_{n}\cup \Omega_{n+1}$. Lastly, $\llbracket u_{e}\rrbracket$ is a scalar value: the jump of the solution on the edge $e$.

Since $\mathbf{M}$ is a diagonal block and the support of $\Phi$ is $\Omega_{n}\cup \Omega_{n+1}$ $\left(Supp\left(\Phi\right)=\Omega_{n}\cup \Omega_{n+1}\right)$, we observe that $\mathbf{\hat{r}}^{e}\left(\llbracket u\rrbracket\right)$ has for mathematical support the elements bordering the edge $e$: $Supp\left(r^{e}\left(\llbracket u\rrbracket\right)\right)=\Omega_{n}\cup \Omega_{n+1}$. We can interpret the correction $r^{e}\left(\llbracket u\rrbracket\right)$ as a polynomial of degree $p$ on each element $\Omega_{n}$ and $\Omega_{n+1}$. With equation~\eqref{eq:invert mass}, we can retrieve the coeffcients of $r^{e}\left(\llbracket u\rrbracket\right)$ on its support. To compute the numerical flux $q^{*}$, we just require the average of $r^{e}\llbracket u\rrbracket$ on the edge $e$. We use Lagrange polynomials both as our test functions and as our interpolation functions, corresponding thus to a DG formulation.

\begin{equation}
\begin{array}{lll}
\left\lbrace\left\lbrace r^{e}\left(\llbracket u\rrbracket\right)\right\rbrace\right\rbrace&=&\dfrac{1}{2}\left(r^{e}\left(\llbracket u \rrbracket\right)_{e,-}+r^{e}\left(\llbracket u \rrbracket\right)_{e,+}\right)\\
&=&\tiny
-\dfrac{\llbracket u_{e} \rrbracket}{4}
\begin{bmatrix}
l_{1}\left(1\right)&\dots&l_{Np}\left(1\right)&l_{1}\left(-1\right)&\dots& l_{Np}\left(-1\right)
\end{bmatrix}
\begin{bmatrix}
\mathbf{M}_{\Omega_{n}}^{-1}&\mathbf{0}\\
\mathbf{0}&\mathbf{M}_{\Omega_{n+1}}^{-1}
\end{bmatrix}
\begin{bmatrix}
l_{1}\left(1\right)\\ \vdots\\ l_{Np}\left(1\right)\\ l_{1}\left(-1\right)\\ \vdots \\ l_{Np}\left(-1\right)
\end{bmatrix}\\\normalsize
&=&-f\left(p,J_{n},J_{n+1}\right)\llbracket u_{e}\rrbracket,
\end{array}
\label{eq:f link to re}
\end{equation}
where $f$ is a factor enabling an equivalency between the BR2 and the IP schemes and it is tabulated in Table~\ref{tab:Table factor} for a range of polynomial degrees on 16 elements. $f$ does not depend on the type of nodes.

\begin{table}[H]
\centering
\makebox[\textwidth][c]{
\begin{tabular}{|c|c|c|c|c|c|c|c|c|}
\hline
$ p$&1&2&3&4&5&6&7&8\\
\hline
$f$&5.093& 11.46& 20.37& 31.83 & 45.84& 62.39& 81.49& 103.1\\
\hline
\end{tabular}
}
\caption{Factor between BR2 and IP for Equidistant, Lobatto-Gauss-Legendre and Gauss-Legendre points for 16 elements. The element are equally-distributed ($J$ is a constant throughout the domain), the length of the entire domain is $2\pi$.}
\label{tab:Table factor}
\end{table}

This result is of the utmost importance for the analogy. It signifies that we have: $q^{*}_{IP}= q^{*}_{BR2}\Leftrightarrow \tau_{e}=s_{e}\,f_{n}\left(p,J_{n},J_{n+1}\right)$.

\subsection{Theoretical equivalence between the BR2 and the IP scheme}
Many ESFR researchers have established the equivalence between the DG and ESFR methods with $c=0$. Huynh established, \cite{huynh_high-order_2011}, a direct link between the derivative of the correction function and the lifting operator applied to recover the FR formulation from the DG formulation. The following presents an analogy between the lifting operator $r^{e}$ and the correction function, allowing to define an explicit formulation for $f$.

Let us consider the linear advection problem in one dimension.
\begin{equation}
\dfrac{\partial u}{\partial t}=-\nabla F,
\end{equation}
where $F=au$ is the flux and $a$ the velocity. Using the mapping defined in~\eqref{eq:affine mapping} and applying the DG method, in its strong formulation \cite{NDG}, results in
\begin{equation}\label{eq:DG formulation advection}
\displaystyle{
\sum_{n}\int_{-1}^{1}\dfrac{\partial u_{n}}{\partial t}\phi\,\mbox{d}r=-\left(\int_{-1}^{1}\nabla_{r} F_{n}\phi\,\mbox{d}r+\underbrace{\left[\left(F_{n}^{*}-F_{n}\right)\left(1\right)\phi\left(1\right)-\left(F_{n}^{*}-F_{n}\right)\left(-1\right)\phi\left(-1\right)\right]}_{\text{Terms to be lifted}}\right),
}
\end{equation}
where $\phi$ is the test function, polynomial of degree less or equal to $p$ and $F^{*}$ is the numerical flux for the advection equation.
Introducing $g_{L}$ and $g_{R}$, such that
\begin{eqnarray}
\stretchint{4ex}_{\bs\bs -1}^{1}\nabla_{r}g_{L}\phi\,\mbox{d}r&=&-\phi\left(-1\right),\label{eq:gL integral}\\
\stretchint{4ex}_{\bs\bs-1}^{1}\nabla_{r}g_{R}\phi\,\mbox{d}r&=&\phi\left(1\right),\label{eq:gR integral}
\end{eqnarray}
we then obtain from~\eqref{eq:DG formulation advection},
\begin{equation}
\displaystyle{
\sum_{n}\int_{-1}^{1}\left[\left(\dfrac{\partial u_{n}}{\partial t}+\nabla_{r} F_{n}+\left(F_{n}^{*}-F_{n}\right)\left(-1\right)\nabla_{r}g_{L}+\left(F_{n}^{*}-F_{n}\right)\left(1\right)\nabla_{r}g_{R}\right)\phi\right]=0.
}
\end{equation}

We have, for all elements $\Omega_{n}$, the differential equation (FR method),
\begin{equation}\label{eq:FR formulation advection}
\dfrac{\partial u_{n}}{\partial t}=-\left[\nabla_{r} F_{n}+\left(F_{n}^{*}-F_{n}\right)\left(-1\right)\nabla_{r}g_{L}+\left(F_{n}^{*}-F_{n}\right)\left(1\right)\nabla_{r}g_{R}\right].
\end{equation}

As explained in Section~\ref{sec:Preliminaries}, both $g_{L}$ and $g_{R}$ are polynomials of degree $p+1$, hence $p+2$ conditions are required to define them properly.

\begin{lem}\label{lem:lifting FR}
Employing the ESFR correction functions defined in Definitions~\ref{def:Left} and~\ref{def:right}, there exists a unique value of $\kappa$ such that equations~\eqref{eq:gL integral} and~\eqref{eq:gR integral} are valid and this value is $\kappa=0$.
\end{lem}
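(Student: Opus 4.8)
The plan is to test the integral identities~\eqref{eq:gL integral}–\eqref{eq:gR integral} directly against the explicit Legendre representation of the ESFR correction function. From Definitions~\ref{def:Left} and~\ref{def:right} (the same expression that is differentiated in~\eqref{eq:delta gl}), the derivative of the left correction function is
\[
\nabla_r g_L = \frac{(-1)^p}{2}\left[\Psi_p' - \frac{\eta_{p,\kappa}\Psi_{p-1}' + \Psi_{p+1}'}{1+\eta_{p,\kappa}}\right],
\]
a polynomial of degree $p$, where $\eta_{p,\kappa}$ vanishes precisely when $\kappa=0$. Since~\eqref{eq:gL integral} must hold for every test polynomial $\phi$ of degree at most $p$, I would evaluate the three building blocks $\int_{-1}^1 \Psi_k'\,\phi\,\mathrm{d}r$ for $k\in\{p-1,p,p+1\}$ and then determine for which $\kappa$ the right-hand side collapses to $-\phi(-1)$.

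Each building block is handled by integration by parts, $\int_{-1}^1 \Psi_k'\phi\,\mathrm{d}r = \left[\Psi_k\phi\right]_{-1}^1 - \int_{-1}^1\Psi_k\phi'\,\mathrm{d}r$, combined with the orthogonality and the endpoint values $\Psi_k(\pm1)$ recalled in Section~\ref{sec:Properties Legendre polynomial}. For $k=p$ and $k=p+1$ the remaining integral vanishes because $\phi'$ has degree at most $p-1$, so only boundary terms in $\phi(1)$ and $\phi(-1)$ survive; for $k=p-1$, orthogonality no longer annihilates $\int_{-1}^1\Psi_{p-1}\phi'\,\mathrm{d}r$, and this leftover integral is the crux of the argument. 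Assembling the three contributions, I expect the clean identity
\[
\int_{-1}^1 \nabla_r g_L\,\phi\,\mathrm{d}r = -\phi(-1) + \frac{(-1)^p}{2}\,\frac{\eta_{p,\kappa}}{1+\eta_{p,\kappa}}\int_{-1}^1\Psi_{p-1}\,\phi'\,\mathrm{d}r,
\]
so that~\eqref{eq:gL integral} is equivalent to the vanishing of the residual term for all admissible $\phi$.

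The decisive step is to conclude that this residual forces $\kappa=0$. The functional $\phi\mapsto\int_{-1}^1\Psi_{p-1}\phi'\,\mathrm{d}r$ is not identically zero on polynomials of degree at most $p$: taking $\phi=\Psi_p$ and using the Legendre derivative recurrence gives $\int_{-1}^1\Psi_{p-1}\Psi_p'\,\mathrm{d}r = 2 \neq 0$. Hence the residual can vanish for every $\phi$ only if $\eta_{p,\kappa}/(1+\eta_{p,\kappa})=0$, i.e.\ $\eta_{p,\kappa}=0$, i.e.\ $\kappa=0$. This single computation yields both existence (setting $\kappa=0$ does satisfy~\eqref{eq:gL integral}) and uniqueness (no other $\kappa$ can). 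The companion identity~\eqref{eq:gR integral} then follows from the reflection $r\mapsto-r$, which interchanges $g_L$ and $g_R$, exactly in the spirit of Remark~\ref{rmk:extension gr}, so no further calculation is required.

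The main obstacle I anticipate is the sign bookkeeping in the integration-by-parts step—correctly propagating the factors $(-1)^p$ coming from $\Psi_k(-1)$ and isolating the $k=p-1$ term cleanly, since that lone term carries the entire dependence on $\kappa$ and thereby settles both halves of the statement. The rest reduces to standard Legendre identities from Section~\ref{sec:Properties Legendre polynomial}.
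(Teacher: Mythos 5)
Your proof is correct and takes essentially the same route as the paper's: a single integration by parts combined with Legendre orthogonality isolates a residual proportional to $\frac{\eta_{p,\kappa}}{1+\eta_{p,\kappa}}\int_{-1}^{1}\Psi_{p-1}\,\nabla_{r}\phi\,\mbox{d}r$, which must vanish for every admissible $\phi$ and hence forces $\kappa=0$ (the paper moves the derivative onto $\phi$ and uses $g_{L}\left(-1\right)=1$, $g_{L}\left(1\right)=0$ with $\nabla_{r}\phi$ expanded in the Legendre basis, while you integrate the individual $\Psi_{k}^{'}$ terms by parts --- the identical computation in a different order). Your explicit check with $\phi=\Psi_{p}$, giving $\int_{-1}^{1}\Psi_{p-1}\Psi_{p}^{'}\,\mbox{d}r=2\neq 0$, makes the uniqueness half slightly more explicit than the paper's proof, which leaves it implicit in the arbitrariness of the coefficient $\alpha_{p-1}$.
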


\begin{proof}
For brevity, only the proof for $g_{L}$ (equation~\eqref{eq:gL integral}) will be given. The derivations for $g_{R}$ are similar.

Integrating by parts equation~\eqref{eq:gL integral},
\begin{equation}\label{eq:equivalence eq31}
\underbrace{g_{L}\left(1\right)\phi\left(1\right)}_{\text{Term A}}-\underbrace{g_{L}\left(-1\right)\phi\left(-1\right)}_{\text{Term B}}-\underbrace{\stretchint{4ex}_{\bs\bs -1}^{1}g_{L}\nabla_{r}\phi\,\mbox{d}r}_{\text{Term C}}=-\phi\left(-1\right).
\end{equation}

If Term A and Term C are equal to 0 and $g_{L}\left(-1\right)=1$ then equation~\eqref{eq:equivalence eq31} is valid.

Employing the left-ESFR correction function, we have $g_{L}\left(-1\right)=1$ and $g_{L}\left(1\right)=0$ (Term A=0). As for Term C, we have
\begin{equation}
\stretchint{4ex}_{\bs\bs -1}^{1}g_{L}\nabla_{r}\phi\,\mbox{d}r=\dfrac{\left(-1\right)^{p}}{2}\left[\stretchint{4ex}_{\bs\bs -1}^{1}\Psi_{p}\nabla_{r}\phi\,\mbox{d}r-\dfrac{\eta_{p,\kappa}}{1+\eta_{p,\kappa}}\stretchint{4ex}_{\bs\bs -1}^{1}\Psi_{p-1}\nabla_{r}\phi\,\mbox{d}r-\dfrac{1}{1+\eta_{p,\kappa}}\stretchint{4ex}_{\bs\bs -1}^{1}\Psi_{p+1}\nabla_{r}\phi\,\mbox{d}r\right].
\end{equation}

$\nabla_{r}\phi$ is a polynomial of degree less or equal to $p-1$ and hence can be decomposed in Legendre polynomials, $\phi=\mathlarger{\sum}_{i=0}^{p-1}\alpha_{i}\Psi_{i}$. Using \ref{prop:integral legendre},
\begin{equation}
\stretchint{4ex}_{\bs\bs -1}^{1}g_{L}\nabla_{r}\phi\,\mbox{d}r=\dfrac{\left(-1\right)^{p+1}\eta_{p,\kappa}\alpha_{p-1}}{\left(1+\eta_{p,\kappa}\right)\left(2p-1\right)},
\end{equation}
which is equal to 0 if $\eta_{p,\kappa}=0$ and  hence if $\kappa=0$.

\end{proof}

\begin{rmk}
With $\kappa=0$, the FR formulation in equation~\eqref{eq:FR formulation advection} is equivalent to the DG formulation in equation~\eqref{eq:DG formulation advection}. Therefore, for $\kappa=0$, the left-ESFR correction function (resp. right) is denoted $g_{L,DG}$ (resp. $g_{R,DG}$).
\end{rmk}
\begin{rmk}
Huynh, \cite{huynh_high-order_2011}, did not use ESFR correction functions. He created $g_{L}$ such that $g_{L}\left(-1\right)=1$ ($1$ condition) and $g_{L}\left(1\right)=0$ ($1$ condition). To recover the DG formulation the remaining requirement was $g_{L}$ orthogonal to $\mathbb{P}_{p-1}$ ($p$ conditions). He extended the FR formulation by relaxing the condition on Term C into $g_{L}$ othogonal to $\mathbb{P}_{p-2}$ and thus obtain a large class of methods ($1$ free parameter). Vincent et al. reduced this class and defined the ESFR methods \cite{vincent_new_2011} (see Definition~\ref{def:Left} and~\ref{def:right}) which ensure the stability for the advection problem. Therefore for ESFR methods and $\kappa\neq 0$, the FR formulation in~\eqref{eq:FR formulation advection} is not equivalent to the DG formulation in~\eqref{eq:DG formulation advection}. However it has been shown \cite{zwanenburg_equivalence_2016}, that the ESFR methods can be cast into a filtered DG method. 
\end{rmk}

An analogy between the correction function and the lifting operator, $r^{e}$, is now presented. Section~\ref{sec:numerical BR2} made us aware that the lifting operator $r^{e}$, for an edge $e$, is defined on both $\Omega_{n}$ and $\Omega_{n+1}$ and is a polynomial of degree $p$ on both $\Omega_{n}$ and $\Omega_{n+1}$ (see Figure~\ref{fig:edges_elements}). We define the space $\Omega_{e}=\bigcup\limits_{i=n}^{n+1}\Omega_{i}$. Similar to the affine mapping defined in~\eqref{eq:affine mapping}, we define the surjection
\begin{equation}\label{eq:affine mapping re}
\begin{array}{lccl}
\mathcal{M}^{-1}_{e}\colon &\Omega_{e}&\to&\left[-1 , 1\right]\\
&x&\mapsto&\dfrac{\left(2x-\left(x_{n}+x_{n+1}\right)\right)}{h_{n}}\left.\chi\right|_{\Omega_{n}}\left(x\right)+\dfrac{\left(2x-\left(x_{n+1}+x_{n+2}\right)\right)}{h_{n+1}}\left.\chi\right|_{\Omega_{n+1}}\left(x\right),
\end{array}
\end{equation}
where $\left.\chi\right|_{\Omega_{i}}\left(x\right)$ is equal to 1 if $x\in\Omega_{i}$, or 0 if $x\not\in\Omega_{i}$.

\begin{lem}\label{lem:formula re correction function}
The lifting operator, $r^{e}$, employed in the BR2 scheme, is equivalent to the following formula
\begin{equation}\label{eq:formula re correction function}
r^{e}\left(\llbracket u \rrbracket\right)\left(x\right)=\dfrac{\llbracket u_{e}\rrbracket}{2}\left[-\dfrac{1}{J_{n}}\nabla_{r}g_{R,DG}\left(\mathcal{M}^{-1}_{e}\left(x\right)\right))\left.\chi\right|_{\Omega_{n}}+\dfrac{1}{J_{n+1}}\nabla_{r}g_{L,DG}\left(\mathcal{M}^{-1}_{e}\left(x\right)\right)\left.\chi\right|_{\Omega_{n+1}}\right],
\end{equation}
where $\nabla_{r}g_{L,DG}$ (resp. $\nabla_{r}g_{R,DG}$) is the derivative with $r$ of the left-ESFR (resp. right-ESFR) correction function for $\kappa=0$.
\end{lem}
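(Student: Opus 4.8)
The plan is to verify that the claimed closed form satisfies the defining weak relation \eqref{eq:def re} of the lifting operator; since $r^{e}$ is uniquely determined by that relation (the block mass matrix $\mathbf{M}$ in \eqref{eq:invert mass} is invertible), such a verification establishes the equality. The proposed expression is a polynomial of degree $p$ on each of $\Omega_{n}$ and $\Omega_{n+1}$, because $g_{L,DG}$ and $g_{R,DG}$ have degree $p+1$, so it already lies in the admissible discrete space and it remains only to test it against arbitrary $\phi$. Since the test functions are chosen element-by-element (the DG space is broken), I would first observe that picking $\phi$ supported on a single element decouples \eqref{eq:def re} into two independent identities, one on $\Omega_{n}$ and one on $\Omega_{n+1}$.

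First I would treat $\Omega_{n}$. Taking $\phi$ supported on $\Omega_{n}$ makes $\phi_{e,+}=0$, so the right-hand side of \eqref{eq:def re} collapses to $-\tfrac{1}{2}\llbracket u_{e}\rrbracket\,\phi_{e,-}$. Substituting $r^{e}$ restricted to $\Omega_{n}$ and changing variables through $x=\mathcal{M}_{n}(r)$, for which $\mathrm{d}x=J_{n}\,\mathrm{d}r$ and $\mathcal{M}_{e}^{-1}$ restricts to $\mathcal{M}_{n}^{-1}$, the Jacobian cancels and the integral reduces to $-\tfrac{1}{2}\llbracket u_{e}\rrbracket\int_{-1}^{1}\nabla_{r}g_{R,DG}\,\tilde{\phi}\,\mathrm{d}r$ with $\tilde{\phi}(r)=\phi(\mathcal{M}_{n}(r))$. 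Applying the DG correction-function property \eqref{eq:gR integral} from Lemma~\ref{lem:lifting FR} then gives $-\tfrac{1}{2}\llbracket u_{e}\rrbracket\,\tilde{\phi}(1)=-\tfrac{1}{2}\llbracket u_{e}\rrbracket\,\phi_{e,-}$, matching the right-hand side.

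Then I would repeat the computation on $\Omega_{n+1}$: there the right-hand side of \eqref{eq:def re} reduces to $-\tfrac{1}{2}\llbracket u_{e}\rrbracket\,\phi_{e,+}$, and after the change of variables $x=\mathcal{M}_{n+1}(r)$ the corresponding term produces $+\tfrac{1}{2}\llbracket u_{e}\rrbracket\int_{-1}^{1}\nabla_{r}g_{L,DG}\,\tilde{\phi}\,\mathrm{d}r$, which by \eqref{eq:gL integral} equals $-\tfrac{1}{2}\llbracket u_{e}\rrbracket\,\tilde{\phi}(-1)=-\tfrac{1}{2}\llbracket u_{e}\rrbracket\,\phi_{e,+}$. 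Adding the two contributions yields exactly $-\llbracket u_{e}\rrbracket\,\{\{\phi_{e}\}\}$, so the defining relation holds for every admissible $\phi$ and, by uniqueness, the formula is established.

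The step I expect to require the most care is the bookkeeping of the edge orientation under the surjection $\mathcal{M}_{e}^{-1}$: I must confirm that on the left element $\Omega_{n}$ the shared edge corresponds to $r=1$ while on the right element $\Omega_{n+1}$ it corresponds to $r=-1$, so that the boundary value extracted by \eqref{eq:gR integral} and \eqref{eq:gL integral} is $\phi_{e,-}$ and $\phi_{e,+}$ respectively. Getting this pairing right — together with the matching signs and the choice of $g_{R,DG}$ on the left versus $g_{L,DG}$ on the right — is precisely what makes the two half-contributions recombine into the mean $\{\{\phi_{e}\}\}$ rather than the jump $\llbracket \phi_{e}\rrbracket$.
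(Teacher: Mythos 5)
Your proposal is correct and takes essentially the same route as the paper: both arguments rest on Lemma~\ref{lem:lifting FR} (the $\kappa=0$ identities \eqref{eq:gL integral}--\eqref{eq:gR integral}), the same Jacobian-cancelling change of variables between reference and physical elements, and the same edge-orientation bookkeeping ($r=1$ on $\Omega_{n}$, $r=-1$ on $\Omega_{n+1}$). The only difference is direction — you verify the candidate formula against the defining relation \eqref{eq:def re} and invoke uniqueness of the lifting operator, whereas the paper derives the formula by gathering all terms into a single integral over $\Omega_{e}$ and factoring out the arbitrary test function — which is the same computation run backwards, with your explicit appeal to invertibility of the mass matrix making the final step slightly more airtight.
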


\begin{proof}
The lifting operator $r^{e}$ is a polynomial of degree $p$ on each element forming $\Omega_{e}$.
Using the definition of the lifting operator in~\eqref{eq:def re},
\begin{equation}
\begin{array}{lll}
\stretchint{4ex}_{\bs\bs \Omega}r^{e}\left(\llbracket u\rrbracket\right)\phi\,\mbox{d}x&=&\stretchint{4ex}_{\bs\bs \Omega_{e}}r^{e}\left(\llbracket u\rrbracket\right)\phi\,\mbox{d}x\vspace{0.2cm}\\
&=&-\llbracket u_{e}\rrbracket\left\lbrace\left\lbrace \phi\right\rbrace\right\rbrace _{e}\vspace{0.2cm}\\
&=&-\dfrac{\llbracket u_{e} \rrbracket}{2}\left[\phi_{e,-}+\phi_{e,+}\right]\vspace{0.2cm}\\
&=&-\dfrac{\llbracket u_{e} \rrbracket}{2}\left[\left.\phi\left(\mathcal{M}^{-1}_{e}\left(x_{n+1}\right)\right)\right|_{\Omega_{n}}+\left.\phi\left(\mathcal{M}^{-1}_{e}\left(x_{n+1}\right)\right)\right|_{\Omega_{n+1}}\right],
\end{array}
\end{equation}
where $\left.\phi\right|_{\Omega_{n}}$ (resp. $\left.\phi\right|_{\Omega_{n+1}}$) is the test function on $\Omega_{n}$ (resp. $\Omega_{n+1}$).

Employing Lemma~\ref{lem:lifting FR},
\begin{equation}
\begin{array}{lll}
\stretchint{4ex}_{\bs\bs \Omega_{e}}r^{e}\left(\llbracket u\rrbracket\right)\phi\,\mbox{d}x&=&-\dfrac{\llbracket u_{e} \rrbracket}{2}\left[\stretchint{4ex}_{\bs\bs -1}^{1}\nabla_{r}g_{R,DG}\left.\phi\right|_{\Omega_{n}}\,\mbox{d}r-\stretchint{4ex}_{\bs\bs -1}^{1}\nabla_{r}g_{L,DG}\left.\phi\right|_{\Omega_{n+1}}\,\mbox{d}r\right]\vspace{0.2cm}\\
&=&-\dfrac{\llbracket u_{e} \rrbracket}{2}\left[\dfrac{1}{J_{n}}\stretchint{4ex}_{\bs\bs \Omega_{n}}\nabla_{r}g_{R,DG}\left.\phi\right|_{\Omega_{n}}\,\mbox{d}x-\dfrac{1}{J_{n+1}}\stretchint{4ex}_{\bs\bs \Omega_{n+1}}\nabla_{r}g_{L,DG}\left.\phi\right|_{\Omega_{n+1}}\,\mbox{d}x\right]\vspace{0.2cm}\\
&=&-\dfrac{\llbracket u_{e} \rrbracket}{2}\stretchint{5ex}_{\bs\bs \Omega_{e}}\left(\dfrac{1}{J_{n}}\nabla_{r}g_{R,DG}\left.\chi\right|_{\Omega_{n}}-\dfrac{1}{J_{n+1}}\nabla_{r}g_{L,DG}\left.\chi\right|_{\Omega_{n+1}}\right)\phi\,\mbox{d}x,
\end{array}
\end{equation}
where $\phi=\left.\phi\right|_{\Omega_{n}}\left.\chi\right|_{\Omega_{n}}+\left.\phi\right|_{\Omega_{n+1}}\left.\chi\right|_{\Omega_{n+1}}$.

We gather all the terms in the same integral and we factor out the test function $\phi$,
\begin{equation}
\stretchint{5ex}_{\bs\bs\Omega_{e}}\left(r^{e}\left(\llbracket u\rrbracket\right)+\dfrac{\llbracket u_{e}\rrbracket} {2}\left(\dfrac{1}{J_{n}}\nabla_{r}g_{R,DG}\left.\chi\right|_{\Omega_{n}}-\dfrac{1}{J_{n+1}}\nabla_{r}g_{L,DG}\left.\chi\right|_{\Omega_{n+1}}\right)\right)\phi\,\mbox{d}x=0.
\end{equation}

This relation is true for any test function $\phi$ on $\Omega_{e}$, hence retrieving equation~\eqref{eq:formula re correction function}.
\end{proof}

\begin{thm}\label{thm:equivalence BR2 IP}
The BR2 scheme is equivalent to the IP scheme, both defined in~\eqref{eq:numerical fluxes}, if and only if
\begin{equation}
s=\dfrac{8 J_{n}J_{n+1}}{\left(J_{n}+J_{n+1}\right)\left(p+1\right)^{2}}\tau.
\end{equation}
\end{thm}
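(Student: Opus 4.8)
The plan is to reduce the claimed equivalence to a single scalar identity and then evaluate that scalar in closed form using Lemma~\ref{lem:formula re correction function}. Since the IP and BR2 fluxes in~\eqref{eq:numerical fluxes} share the same $u^{*}=\left\lbrace\left\lbrace u\right\rbrace\right\rbrace$ and the same $\left\lbrace\left\lbrace\nabla u\right\rbrace\right\rbrace$ contribution to $q^{*}$, the two schemes coincide exactly when their penalty parts agree on every edge, i.e. when $-\tau\llbracket u_{e}\rrbracket=s\left\lbrace\left\lbrace r^{e}\left(\llbracket u\rrbracket\right)\right\rbrace\right\rbrace$. As recorded at the end of Section~\ref{sec:numerical BR2}, this is precisely $q^{*}_{IP}=q^{*}_{BR2}\Leftrightarrow\tau=s\,f\left(p,J_{n},J_{n+1}\right)$ with $\left\lbrace\left\lbrace r^{e}\left(\llbracket u\rrbracket\right)\right\rbrace\right\rbrace=-f\,\llbracket u_{e}\rrbracket$. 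Because $\llbracket u_{e}\rrbracket$ is arbitrary and $f$ is a nonzero constant, the ``if and only if'' is automatic, and the entire theorem reduces to deriving a closed form for $f$.

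First I would evaluate $\left\lbrace\left\lbrace r^{e}\left(\llbracket u\rrbracket\right)\right\rbrace\right\rbrace$ directly from the explicit formula~\eqref{eq:formula re correction function}. The edge $e$ sits at $x_{n+1}$, which is the image of $r=1$ under $\mathcal{M}_{n}$ and of $r=-1$ under $\mathcal{M}_{n+1}$; hence $\mathcal{M}^{-1}_{e}$ sends the two traces to $r=1$ on $\Omega_{n}$ and $r=-1$ on $\Omega_{n+1}$. Substituting into~\eqref{eq:formula re correction function} gives the one-sided traces $r^{e}_{e,-}=-\tfrac{\llbracket u_{e}\rrbracket}{2J_{n}}\nabla_{r}g_{R,DG}\left(1\right)$ and $r^{e}_{e,+}=\tfrac{\llbracket u_{e}\rrbracket}{2J_{n+1}}\nabla_{r}g_{L,DG}\left(-1\right)$, whose average via~\eqref{eq:mean} is the object to compute. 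Applying the symmetry relation of Theorem~\ref{thm:correction function}, namely $\nabla_{r}g_{R,DG}\left(1\right)=-\nabla_{r}g_{L,DG}\left(-1\right)$, collapses both contributions onto the single factor $\nabla_{r}g_{L,DG}\left(-1\right)$, so that
\[
\left\lbrace\left\lbrace r^{e}\left(\llbracket u\rrbracket\right)\right\rbrace\right\rbrace=\frac{\llbracket u_{e}\rrbracket}{4}\left(\frac{1}{J_{n}}+\frac{1}{J_{n+1}}\right)\nabla_{r}g_{L,DG}\left(-1\right).
\]

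It then remains to insert the DG value of the derivative: using the expression for $\nabla_{r}g_{L}\left(-1\right)$ obtained in the proof of Theorem~\ref{thm:part 1 independent}, together with $\eta_{p,0}=0$ from Lemma~\ref{lem:lifting FR}, one finds $\nabla_{r}g_{L,DG}\left(-1\right)=-\tfrac14\left(p\left(p+1\right)+\left(p+1\right)\left(p+2\right)\right)=-\tfrac{\left(p+1\right)^{2}}{2}$. This identifies $f=\tfrac{\left(p+1\right)^{2}}{8}\left(\tfrac{1}{J_{n}}+\tfrac{1}{J_{n+1}}\right)$, and substituting into $\tau=s\,f$ and solving for $s$ yields the stated formula. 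The trace substitutions and the final arithmetic are routine; the one place requiring care is the bookkeeping of $\mathcal{M}^{-1}_{e}$ --- assigning the correct endpoint ($r=1$ versus $r=-1$) to each sub-element consistently with Figure~\ref{fig:edges_elements} --- since it is exactly the endpoint sign symmetry of Theorem~\ref{thm:correction function} that makes the two traces reinforce rather than cancel. Once the orientation is fixed, the remainder is a direct computation.
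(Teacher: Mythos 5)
Your proposal is correct and follows essentially the same route as the paper's proof: both reduce the equivalence to matching the penalty terms, evaluate the traces of $r^{e}$ via Lemma~\ref{lem:formula re correction function}, collapse them with the symmetry $\nabla_{r}g_{R,DG}\left(1\right)=-\nabla_{r}g_{L,DG}\left(-1\right)$ from Theorem~\ref{thm:correction function}, and use $\nabla_{r}g_{L,DG}\left(-1\right)=-\tfrac{\left(p+1\right)^{2}}{2}$ to identify $f$ and solve for $s$. The only cosmetic difference is that you obtain the endpoint derivative by setting $\eta_{p,\kappa}=0$ in the expression from the proof of Theorem~\ref{thm:part 1 independent}, whereas the paper invokes the Legendre property~\ref{prop:derivative value +1 -1} directly --- the same computation in different clothing.
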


\begin{proof}
The only difference between the IP and the BR2 schemes is the penalty term; $-\tau\llbracket u \rrbracket$ for the IP scheme and $s\left\lbrace\left\lbrace r^{e}\left(\llbracket u \rrbracket\right)\right\rbrace\right\rbrace$ for the BR2 scheme. Lemma~\ref{lem:formula re correction function} yields,
\begin{equation}
\begin{array}{lll}
s\left\lbrace\left\lbrace r^{e}\left(\llbracket u \rrbracket\right)\right\rbrace\right\rbrace&=&\dfrac{s}{2}\left(r^{e}\left(\llbracket u \rrbracket\right)_{e,-}+r^{e}\left(\llbracket u \rrbracket\right)_{e,+}\right)\vspace{0.2cm}\\
&=&\dfrac{s\llbracket u \rrbracket}{4}\left(-\dfrac{1}{J_{n}}g_{R,DG}^{'}\left(1\right)+\dfrac{1}{J_{n+1}}g_{L,DG}^{'}\left(-1\right)\right).
\end{array}
\end{equation}

We use Theorem~\ref{thm:correction function} ($g_{R,DG}^{'}\left(1\right)=-g_{L,DG}^{'}\left(-1\right)$) and then employ~\ref{prop:derivative value +1 -1} of the Legendre polynomial. It yields $g_{L,DG}^{'}\left(-1\right)=-\dfrac{\left(p+1\right)^{2}}{2}$ and hence

\begin{equation}
s\left\lbrace\left\lbrace r^{e}\left(\llbracket u \rrbracket\right)\right\rbrace\right\rbrace=-\dfrac{s\llbracket u_{e} \rrbracket\left(p+1\right)^{2}}{8}\left(\dfrac{1}{J_{n}}+\dfrac{1}{J_{n+1}}\right).
\end{equation}

We finally have
\begin{equation}
\begin{array}{llrll}
&&q^{*}_{e,BR2}&=&q^{*}_{e,IP}\vspace{0.2cm}\\
&\Longleftrightarrow&s\left\lbrace\left\lbrace r^{e}\left(\llbracket u \rrbracket\right)\right\rbrace\right\rbrace&=&-\tau\llbracket u_{e} \rrbracket\vspace{0.2cm}\\
&\Longleftrightarrow&-\dfrac{s\llbracket u_{e} \rrbracket\left(p+1\right)^{2}}{8}\left(\dfrac{1}{J_{n}}+\dfrac{1}{J_{n+1}}\right)&=&-\tau\llbracket u_{e} \rrbracket\vspace{0.2cm}\\
&\Longleftrightarrow&s&=&\dfrac{8 J_{n}J_{n+1}}{\left(J_{n}+J_{n+1}\right)\left(p+1\right)^{2}}\tau.
\end{array}
\end{equation}
\end{proof}

\begin{cor}
Employing the BR2 scheme for the diffusion equation with the ESFR methods, $s$ greater than $s^{*}$ implies the energy stability, with
\normalfont
\begin{equation}\label{eq:criterion BR2}
\displaystyle{
s^{*}=\dfrac{2\min\limits_{\kappa}\left(\left|\nabla_{r}g_{L}\left(1\right)\right|-\nabla_{r}g_{L}\left(-1\right)\right)}{\left(p+1\right)^{2}} \implies \sum_{e=1}^{N_{e}}\Theta_{e}\leq 0 \implies\dfrac{\mbox{d}}{\mbox{d}t}\Vert U\Vert_{p,c}^{2}\leq 0.
}
\end{equation}
\end{cor}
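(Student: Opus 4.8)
The plan is to avoid re-deriving an energy balance and instead reduce the claim to the IP result through the exact equivalence of Theorem~\ref{thm:equivalence BR2 IP}. The IP and BR2 schemes of~\eqref{eq:numerical fluxes} differ only in their penalty terms, and Theorem~\ref{thm:equivalence BR2 IP} shows that $q^{*}_{e,BR2}=q^{*}_{e,IP}$ exactly when $s=\frac{8J_{n}J_{n+1}}{(J_{n}+J_{n+1})(p+1)^{2}}\tau$. Under this identification the two schemes generate the same semi-discrete system, so the edge contribution $\Theta_{e}$ entering the energy balance~\eqref{eq:Equation69}, and therefore the rate $\frac{\mathrm{d}}{\mathrm{d}t}\Vert U\Vert_{p,c}^{2}$, are literally the same for both fluxes. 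Consequently any penalty threshold guaranteeing $\sum_{e}\Theta_{e}\le 0$ for IP maps to a threshold with the same guarantee for BR2.

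First I would invoke Theorem~\ref{thm:stability IP}: the choice $\tau\ge\tau^{*}$, with $\tau^{*}$ given in~\eqref{eq:criterion IP}, yields $\sum_{e}\Theta_{e}\le 0$ and hence $\frac{\mathrm{d}}{\mathrm{d}t}\Vert U\Vert_{p,c}^{2}\le 0$. Because the proportionality constant in Theorem~\ref{thm:equivalence BR2 IP} is strictly positive, the linear map $\tau\mapsto s$ is increasing, so $\tau\ge\tau^{*}$ is equivalent to $s\ge s^{*}$, where $s^{*}$ is the image of $\tau^{*}$ under that map; this ensures the translated condition is neither strengthened nor weakened. It then only remains to substitute $\tau^{*}$ into the equivalence relation and simplify.

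Specializing to the uniform mesh used in the numerical experiments, $J_{n}=J$ for every $n$, so $\max_{n}(1/J_{n})=1/J$ and $\tau^{*}=\frac{1}{2J}\min_{\kappa}\left(\left|\nabla_{r}g_{L}(1)\right|-\nabla_{r}g_{L}(-1)\right)$, while the equivalence constant collapses to $\frac{4J}{(p+1)^{2}}$. Multiplying the two, the factor $J$ cancels and leaves $s^{*}=\frac{2}{(p+1)^{2}}\min_{\kappa}\left(\left|\nabla_{r}g_{L}(1)\right|-\nabla_{r}g_{L}(-1)\right)$, exactly the stated bound. I expect no genuine obstacle here: the entire content is carried by Theorems~\ref{thm:stability IP} and~\ref{thm:equivalence BR2 IP}. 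The only points requiring attention are confirming the clean cancellation of the Jacobian factors on the uniform mesh and noting, via Corollary~\ref{cor:independent}, that the resulting $s^{*}$ inherits the $\kappa$-independence already built into $\tau^{*}$.
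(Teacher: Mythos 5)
Your proposal follows exactly the paper's route: the paper's entire proof of this corollary is ``Combining Theorems~\ref{thm:equivalence BR2 IP} and~\ref{thm:stability IP}, we show that the BR2 scheme is equivalent to the IP scheme, and then we use the criterion of stability found for the IP scheme.'' You additionally carry out the algebra the paper leaves implicit, correctly observing that the Jacobian factors cancel to give $s^{*}=\frac{2}{(p+1)^{2}}\min_{\kappa}\left(\left|\nabla_{r}g_{L}(1)\right|-\nabla_{r}g_{L}(-1)\right)$ precisely on the uniform mesh $J_{n}=J$, which is the setting in which the stated Jacobian-free form of $s^{*}$ holds.
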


\begin{proof}
Combining Theorems~\ref{thm:equivalence BR2 IP} and~\ref{thm:stability IP}, we show that the BR2 scheme is equivalent to the IP scheme, and then we use the criterion of stability found for the IP scheme.
\end{proof}

\begin{rmk}
From Theorem~\ref{thm:equivalence BR2 IP}, the factor $f$, defined in Section~\ref{sec:numerical BR2} can be expressed as,
\begin{equation}\label{eq:expression f}
f\left(p,J_{n},J_{n+1}\right)=\dfrac{1}{8}\left(p+1\right)^{2}\left(\dfrac{1}{J_{n}}+\dfrac{1}{J_{n+1}}\right).
\end{equation}
We can easily verify that~\eqref{eq:expression f} verifies Table~\ref{tab:Table factor}, where, for the Jacobian $\left(\text{equation~\eqref{eq:def jacobian}: } J=cste=\frac{2\pi}{2N_{k}}\right)$, the factor $f=\dfrac{1}{4\pi}\left(p+1\right)^{2}N_{K}$.
\end{rmk}

\subsection{Numerical results}
We launch the same suite of test cases as for the IP case, Section~\ref{sec:IP numerical experiment}.
Due to the equivalency between the IP and BR2 in 1D, we indeed obtain the same results as Table~\ref{tab:result IP Np=3} multiplied by the adapted factor $f$ in Table~\ref{tab:Table factor}. These BR2 results are presented in Table~\ref{tab:result BR2 Np=3}.

\begin{table}[H]
\centering
\makebox[\textwidth][c]{
\begin{tabular}{|c||c|c|c|c|c||c|c|c|c|c|}
\hline
p&\multicolumn{5}{c||}{2}&\multicolumn{5}{c|}{3}\\
\hline
\bf \backslashbox{$c$}{$\kappa$}&$\kappa_{DG}$&$\kappa_{SD}$&$\kappa_{Hu}$&$\kappa_{+}$&$10^{5}$&$\kappa_{DG}$&$\kappa_{SD}$&$\kappa_{Hu}$&$\kappa_{+}$&$10^{5}$ \\
\hline
$s_{theory}^{*}$&0.67&0.67&0.67&0.67&0.67&0.75&0.75&0.75&0.75&0.75\\
\hline
\hline
$c_{DG}$&0.67&0.67&0.67&0.67&0.67&0.75&0.75&0.75&0.75&0.75\\
\hline
$c_{SD}$&0.67&0.67&0.67&0.67&0.67&0.75&0.75&0.75&0.75&0.75\\
\hline
$c_{HU}$&0.66&0.66&0.66&0.66&0.66&0.75&0.75&0.75&0.75&0.75\\
\hline
$c_{+}$&0.65&0.65&0.65&0.65&0.65&0.75&0.75&0.75&0.75&0.75\\
\hline
$10^{5}$&0.23&0.23&0.23&0.23&0.23&0.38&0.38&0.38&0.38&0.38\\
\hline
\end{tabular}
}
\caption{$s_{theory}^{*}$ compared to $s_{numerical}^{*}$ for the BR2 scheme with $p=2$ and $p=3$ for 32 elements.}
\label{tab:result BR2 Np=3}
\end{table}
\section{Von Neumann analysis}\label{sec:Von neumann analysis}

The objective of this section is to investigate the dissipative and dispersive properties of the ESFR methods while employing the IP/BR2 numerical fluxes through a one-dimensional von Neumann analysis. Similarly to~\cite{vincent_insights_2011} we admit plane wave solutions for a range of frequencies and analyze the dissipative and dispersive properties of the schemes. A von Neumann analysis for the  linear advection can be found in \cite{vincent_insights_2011}.

\subsection{Dissipation and dispersion study}

We start the analysis by first nondimensionalizing the PDE~\eqref{eq:diffusion equation} by introducing $\hat{x}=\frac{x}{\Delta h}$, $\hat{t}=\frac{t\, b}{\Delta h^{2}}$,
\begin{equation}\label{eq:Nondim diffusion equation}
\dfrac{\partial u}{\partial \hat{t}}=\dfrac{\partial^{2}u}{\partial \hat{x}^{2}}.
\end{equation}
Equation~\eqref{eq:diffusion equation} is recovered by taking $b=1$ and a unitary element size $h=1$. 

Upon renaming $\hat{x}$ and $\hat{t}$ into $x$ and $t$, the PDE~\eqref{eq:Nondim diffusion equation} admits solutions of the form $u=e^{\iu\left(kx-\omega t\right)}$, where $k$ is the wave number and $\omega$ is the wave frequency and must satisfy,

\begin{equation}\label{eq:exact dissipation and dispersion relations}
\begin{array}{lll}
\operatorname{Re}(\omega)&=&0,\\
\operatorname{Im}(\omega)&=&-k^{2},
\end{array}
\end{equation}
which are called the exact dissipation and dispersion relations.

We now consider our numerical model (\eqref{eq:numerical main} and~\eqref{eq:numerical aux}) and discretize equation~\eqref{eq:Nondim diffusion equation}. Consider a single element $\Omega_{n}$ of size $\Delta h=h_{n}=1$. The IP scheme, results to a three-element stencil, where the solution in element $\Omega_{n}$ requires information only from $\Omega_{n+1}$ and $\Omega_{n-1}$. A semi-discrete form of the ESFR scheme can be written as, 
\begin{equation}
\dfrac{\mbox{d} \mathbf{\hat u}_{n}}{\mbox{d}t}=\mathbf{C_{n-1}}\mathbf{\hat u}_{n-1}+\mathbf{C_{n}}\mathbf{\hat u}_{n}+\mathbf{C_{n+1}}\mathbf{\hat u}_{n+1},
\end{equation}
where, $\mathbf{C_{n-1}}$ corresponds to a matrix of size  $N_{p}\times N_{p}$ multiplying the vector  $\mathbf{\hat u}_{n-1}$ to compute the Laplacian of $u_{n}$, and the same can be said of matrices $\mathbf{C_{n}}$ and $\mathbf{C_{n+1}}$.  This problem admits a Bloch-wave solution~\cite{AINSWORTH2004106}, $\mathbf{\hat{u}}_{n}=e^{\iu\left(n\,k-\omega^{\delta}t\right)}\mathbf{\hat v}$, where $k$ is the prescribed discrete wave number $\left(-\pi\leq k \leq \pi\right)$, $\omega^{\delta}$ is the discrete frequency number and $\mathbf{\hat v}$ is a vector independent of element $\Omega_{n}$. Both $\omega^{\delta}$ and $\mathbf{\hat v}$ will be determined via the global matrix $\mathbf{S}$ defined below. The solution on the neighbouring elements are calculated using the property of periodicity,
\begin{equation}
\begin{array}{lll}
\mathbf{\hat u}_{n-1}&=&e^{\iu \left((n-1)k -\omega^{\delta} t\right)} \mathbf{\hat v}\\
&=&\mathbf{\hat u}_{n}e^{-\iu k},
\end{array}
\end{equation}
while $\mathbf {\hat u}_{n+1}=\mathbf{\hat u}_{n}e^{\iu k}$ and thus we can represent the final semi-discrete form of the equation as 
\begin{equation}\label{eq:equation S}
\dfrac{\mbox{d} \mathbf{\hat u}_{n}}{\mbox{d}t}=\mathbf{S}\left(k\right)\mathbf{\hat u}_{n},
\end{equation}
where, $\mathbf{S}$ is of size $N_{p}\times N_{p}$, $\mathbf{S}\left(k\right)=\sum\limits_{j=n-1}^{n+1}\mathbf{C}_{j}e^{\iu k\, \left(j-n\right)}$.

The dissipative and dispersive properties of the IP scheme can be studied through the $N_{p}$ eigenvalues $\lambda\left(k\right)$ and $N_{p}$ eigenvectors $\mathbf{\hat v}$ of matrix $\mathbf{S}$, where each eigenvalue corresponds to a specific mode of the solution \cite{vincent_insights_2011}.

\begin{equation}
\begin{array}{lll}
\dfrac{\mbox{d}\mathbf{\hat u}_{n}}{\mbox{d}t}&=&\mathbf{S}\left(k\right)\mathbf{\hat u}_{n},\\
-\iu\omega^{\delta}\mathbf{\hat v}&=&\lambda\left(k\right)\mathbf{\hat v},\\
\omega^{\delta}&=&\iu\lambda.
\end{array}
\end{equation}

Solving this eigenvalue problem results in
\begin{equation}
\mathbf{\hat{u}_{n}}=\mathlarger{\sum}_{j=1}^{N_{☺p}}\alpha_{j}\mathbf{\hat{v}}_{j}e^{\left(\iu k x +\lambda_{j}\left(k\right)t\right)},
\end{equation}
where $\left(\alpha_{j}\right)_{j\in\llbracket 1,N_{p}\rrbracket}$ are the scaling coefficients and can be retrieved from the initial solution \cite{HU1999921}.

Theoretically, we have the dissipation and dispersion relations for $\lambda$ (analogous to relations~\eqref{eq:exact dissipation and dispersion relations}) as,
\begin{equation}\label{eq:dissipation dispersion lambda}
\begin{array}{lll}
\operatorname{Re}\left(\lambda\right)&=&-k^{2},\\
\operatorname{Im}\left(\lambda\right)&=&0.
\end{array}
\end{equation}
In each of our numerical simulations, we have observed no dispersion as expected with the previous relation. Therefore only the dissipation curves will be represented in the following section. The matrix $\mathbf{S}$ is $2\pi$-periodic with $k$; hence the eigenvalues, associated to $\mathbf{S}$, are also $2\pi$-periodic. Having a $p$ degree polynomial, $\mathbf{S}$ is a square matrix of size $p+1$. Each value of $k$ in $\left[-\pi,\pi\right]$ will produce $p+1$ eigenvalues, which provide both the dissipative and dispersive responses for each mode.  Any conclusions drawn from von Neumann analysis is far from trivial. Disparate interpretations of this analysis can be found in the literature. For instance Huynh~\cite{huynh_reconstruction_2009} considers the primary mode to be the physical mode and represents the true dissipative response of the scheme; while the subsequent modes are spurious. However, Van den Abeele~\cite{van2009development} argues that the spurious modes do have an influence on an extended domain of the wave number. Termed as true wave number $\tilde{k}$, the approach plots each mode onto a disjointed domain. However, neither Huynh's~\cite{huynh_reconstruction_2009} classical approach nor Van den Abeele's~\cite{van2009development} approach of investigating the spurious modes consider the effect of the coupling between the modes. In this article, the authors propose an analysis similar to that adopted by Vincent et al.~\cite{vincent_insights_2011} which is based on the approach introduced in~\cite{van2009development} but we attempt to investigate the coupling of the modes. 

To demonstrate the analysis we first consider the dissipation response for $p=1$, where the eigenvalues for the two modes can be found analytically through the expression, 
\begin{equation}
\lambda_{1,2}\left(k\right)=\dfrac{Tr\left(\mathbf{S}\left(k\right)\right)}{2}\pm\dfrac{\sqrt{\Delta\left(k\right)}}{2}.
\end{equation}
Here $Tr\left(S\right)$ is the trace of the matrix $\mathbf{S}$ and $\Delta\left(k\right)=Tr\left(\mathbf{S}\left(k\right)\right)^{2}-4\left|\mathbf{S}\left(k\right)\right|$, while $\left|\mathbf{S}\left(k\right)\right|$ is the determinant of $\mathbf{S}\left(k\right)$. Figure~\ref{fig:dissipation p=1} illustrates the dissipation response for $p=1$ for both $\tau=\tau_{theory}$ and $\tau=4\tau_{theory}$. Here, for both cases, we have $\forall k \in \left[-\pi,\pi\right],\,\Delta\left(k\right)>0$ hence the modes are $\mathcal{C}^{\infty}$. It is possible to obtain cases where $\exists k\in\left[-\pi,\pi\right]\,\text{such that}\, \left|\mathbf{S}\right|=0$. In that case the mode will just be $\mathcal{C}^{0}$.
\begin{figure}[H]
\centering
\begin{subfigure}{0.48\textwidth}
\centering
\includegraphics[width=\linewidth]{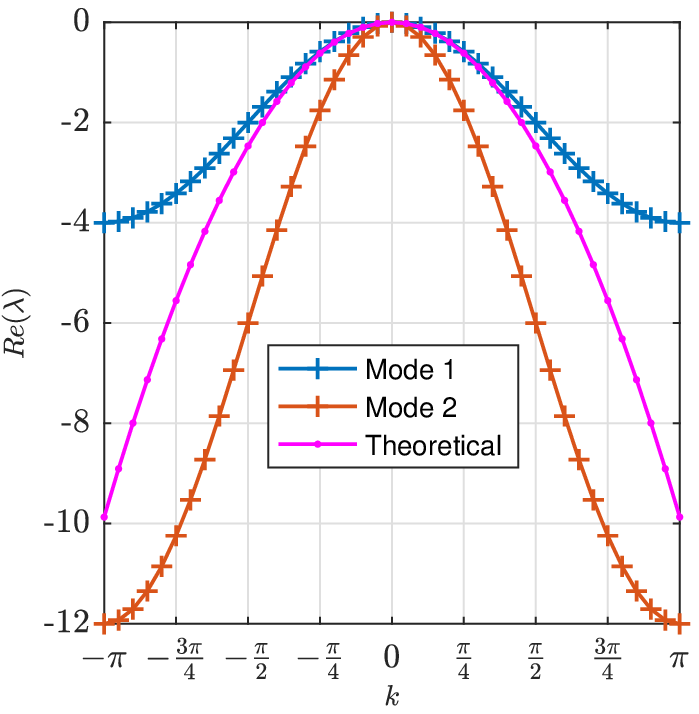}
\caption{Dissipation response $\tau=\tau_{theory}$.} \label{fig:DG tau p=1}
\end{subfigure}\hspace*{\fill}
\begin{subfigure}{0.48\textwidth}
\centering
\includegraphics[width=\linewidth]{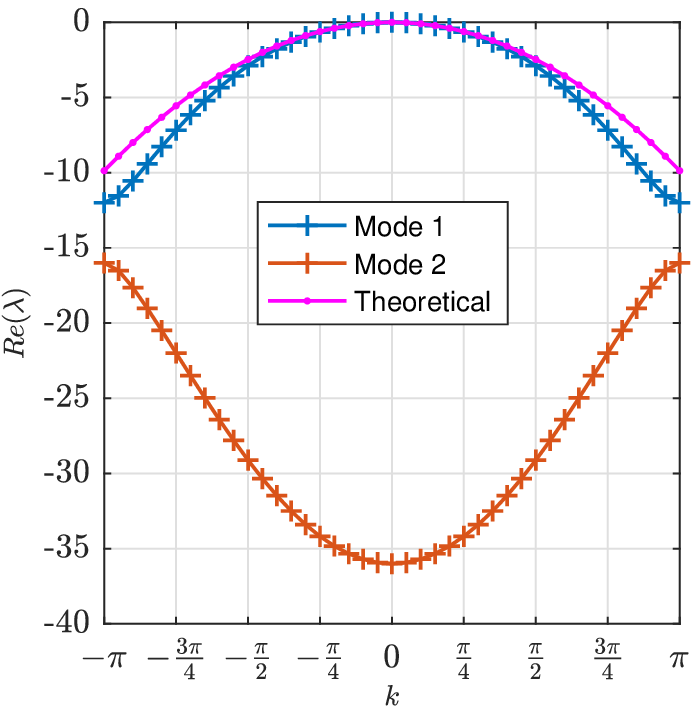}
\caption{Dissipation response $\tau=4\tau_{theory}$.} \label{fig:DG 15 tau p=1}
\end{subfigure}
%\medskip
%\begin{subfigure}{0.48\textwidth}
%\centering
%\includegraphics[width=\linewidth]{body/images/question6_c.eps}
%\caption{Velocity profile at one chord from the TE} \label{fig:e}
%\end{subfigure}\hspace*{\fill}
%\begin{subfigure}{0.48\textwidth}
%\centering
%\includegraphics[width=\linewidth]{body/images/question6_d.eps}
%\caption{Velocity profile at two quarter chord from the TE} \label{fig:f}
%\end{subfigure}
\caption{Analytical dissipation response for $c_{DG}/\kappa_{DG}$ and $p=1$.} \label{fig:dissipation p=1}
\end{figure}

The case $p=1$ is extremely simple since an analytical expression can be found. However the case $p=2$ is more challenging and we will focus on this degree of polynomial for the rest of the analysis. Once the eigenvalues are evaluated, the primary difficulty resides in the association of the eigenvalue to a mode. Two choices are possible: First, compute the eigenvalues for all wave numbers, $k$ in $\left[-\pi,\pi\right]$, then sort the eigenvalues in such a manner to preserve consistency across the wave number spectrum; Second, employ the algorithm of Vincent et al.~\cite{vincent_insights_2011}, where the eigenvectors are decomposed onto a Legendre basis and the coefficients of the Legendre polynomial for each eigenvector are compared to sort the eigenvalues. In this paper, the second approach has been adopted, as it provides a better physical sense, where, with a higher degree polynomial $p$, we can reach a higher frequency~\cite{van2009development}. Decomposing the eigenvectors into a Legendre polynomial basis yields an insight on which mode is associated to the range of frequencies. In addition we present the results on an extended range, where each eigenvalue is associated to a true wave number $\tilde{k}=k+l\pi$ where $l \in\llbracket 0,p\rrbracket $. The algorithm can be summarized as follows, more details are available in~\cite{vincent_insights_2011}, 
\begin{enumerate}
\item Select a value for wave number, $k$.
\item Calculate the eigenvalues $\left(\lambda_{j}\right)_{j\in\llbracket 1,N_{p} \rrbracket}$ and the associated eigenvectors $\left(\mathbf{\hat v}_{j}\right)_{j\in\llbracket 1,N_{p} \rrbracket}$.
\item Decompose each eigenvector into its Legendre polynomial basis. $\mathbf{\hat v}_{j}$ yields $N_{p}$ discrete values from which we can define a polynomial of degree $p$: $\hat{v}_{j}\left(r\right)$. This function can be represented as a sum of Legendre polynomials $\hat{v}_{j}\left(r\right)=\sum_{i=1}^{N_{p}}{\bar{v}}_{i,j}\Psi_{i-1}\left(r\right)$, with $\Psi_{i}$, the Legendre polynomial of degree $i$. Therefore the vector $\mathbf{\hat{v}}_{j}$ is associated to the vector $\mathbf{\bar{v}}_{j}$, which contains the values $\bar{v}_{i,j}$. Then build the matrix $\mathbf{\bar{V}}\left(k\right)$ where ${\bar{v}}_{i,j}$ is the coefficient of the Legendre polynomial $P_{i}$ associated to the eigenvalues $\lambda_{j}$.
\item Starting with the first row of $\mathbf{\bar{V}}$, ie. $\Psi_{0}$, compare the values of this first row and associate Mode 1 to the eigenvalue whose coefficient ${\bar{v}}_{1,j}$ is the greatest. Then remove the associated column to the matrix $\mathbf{\bar{V}}$ since the corresponding eigenvalue has been assigned a mode and then move on to the second row of $\mathbf{\bar{V}}$.
For instance the eigenvalue which has the highest coeffcient for $\Psi_{0}$ is associated to low frequencies and hence to Mode 1, whereas the eigenvalues which has the highest coefficient for $\Psi_{p}$ is associated to high frequencies and hence to the last mode.
\end{enumerate}

For a better comprehension, let us take for example the case of $p=2$. The decomposition into a Legendre polynomial basis leads us to the matrix,
\begin{equation}
\mathbf{\bar{V}} =
\begin{bmatrix}
{\bar{v}}_{11}&{\bar{v}}_{12}&{\bar{v}}_{13}\\
{\bar{v}}_{21}&{\bar{v}}_{22}&{\bar{v}}_{23}\\
{\bar{v}}_{31}&{\bar{v}}_{32}&{\bar{v}}_{33}
\end{bmatrix}.
\end{equation}
Let us imagine that the $\max\left({\bar{v}}_{11},{\bar{v}}_{12},{\bar{v}}_{13}\right)={\bar{v}}_{13}$ then $\lambda_{3}$ is associated to Mode 1. The third column is removed from consideration when the next row is analyzed. Then, if $\max\left({\bar{v}}_{21},{\bar{v}}_{22}\right)={\bar{v}}_{21}$, the first eigenvalue computed, $\lambda_{1}$ is associated to Mode 2. It remains that $\lambda_{2}$ corresponds to Mode 3.

From our previous example $p=1$, the second approach as described above yields the dissipation response demonstrated in Figures~\ref{fig:DG tau p=1 legendre} and~\ref{fig:DG 15 tau p=1 legendre}. While Figure~\ref{fig:DG tau p=1 legendre} is identical to Figure~\ref{fig:DG tau p=1} for $\tau=\tau_{theory}$, we observe that Figure~\ref{fig:DG 15 tau p=1 legendre} is largely similar to~Figure~\ref{fig:DG 15 tau p=1} when $\tau=4\tau_{theory}$ but the modes are shifted for $|k|$ close to $\pi$, where the procedure has a tendency to introduce jumps within a mode if the maximum coefficients of the Legendre polynomial shifts between two eigenvalues when a particular degree of the polynomial is considered.

\begin{figure}[H]
\centering
\begin{subfigure}{0.48\textwidth}
\centering
\includegraphics[width=\linewidth]{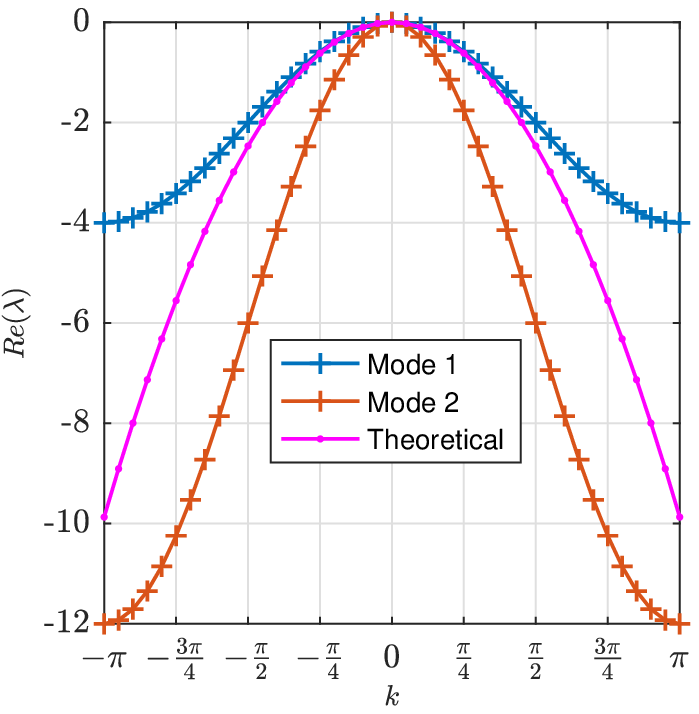}
\caption{Dissipation response $\tau=\tau_{theory}$.} \label{fig:DG tau p=1 legendre}
\end{subfigure}\hspace*{\fill}
\begin{subfigure}{0.48\textwidth}
\centering
\includegraphics[width=\linewidth]{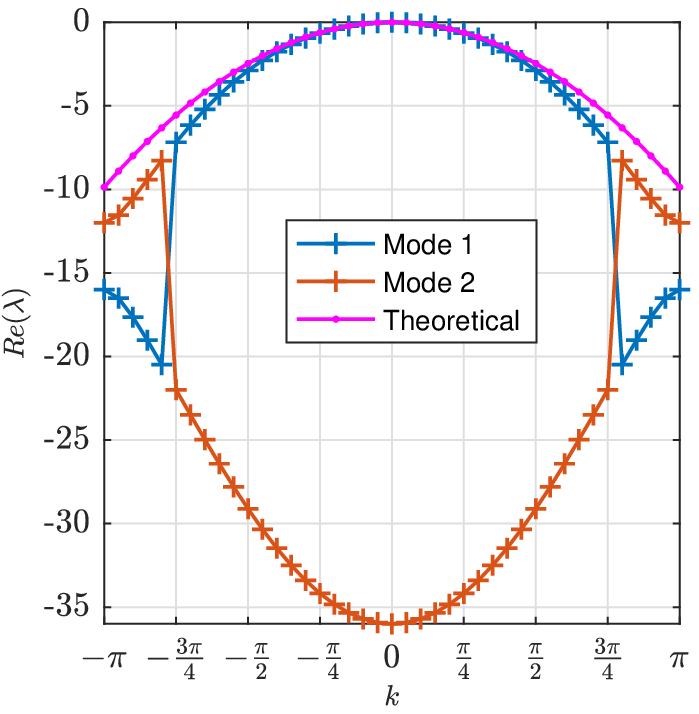}
\caption{Dissipation response $\tau=4\tau_{theory}$.} \label{fig:DG 15 tau p=1 legendre}
\end{subfigure}

%\medskip
%\begin{subfigure}{0.48\textwidth}
%\centering
%\includegraphics[width=\linewidth]{body/images/question6_c.eps}
%\caption{Velocity profile at one chord from the TE} \label{fig:e}
%\end{subfigure}\hspace*{\fill}
%\begin{subfigure}{0.48\textwidth}
%\centering
%\includegraphics[width=\linewidth]{body/images/question6_d.eps}
%\caption{Velocity profile at two quarter chord from the TE} \label{fig:f}
%\end{subfigure}
\caption{Dissipation response for $c_{DG}/\kappa_{DG}$ and $p=1$ using Legendre decomposition.} \label{fig:dissipation p=1 Legendre}
\end{figure}

As demonstrated in Figures~\ref{fig:DG tau p=1 legendre} and~\ref{fig:DG 15 tau p=1 legendre}, this decomposition leads to two challenges. First, by removing the eigenvalues that have already been affected, we remove the influence of their coefficients associated to higher degree polynomials. In the previous example, we didn't compare ${\bar{v}}_{2,3}$ or ${\bar{v}}_{3,3}$ with the other values. Therefore $\lambda_{3}$, associated to Mode 1, may have some contribution to Mode 2 or Mode 3 that was not taken into account. To better quantify the potential that an eigenvalue can influence a wider range of frequencies, we can examine the ratio of the primary component of the Legendre polynomial, ${\bar{v}}_{ij}$, for Mode $i$ to the sum of its coefficients. 
Let us consider a Mode $i$ associated to the vector $\mathbf{\bar{v}}_{j}$, where the \enquote{primary} component of this vector, ${\bar{v}}_{i,j}$ is aligned with Mode $i$, then the ratio $R_{mode}(i,k)$ is defined as
\begin{equation}\label{eq:ratio mode}
R_{mode}\left(i,k\right)=\dfrac{{\bar{v}}_{i,j}^{2}}{\sum_{m=1}^{N_{p}}{\bar{v}}_{m,j}^{2}},
\end{equation}
where $R_{mode}\left(i,k\right)$ is the ratio of Mode $i$ for a wave number $k$. Therefore, this ratio determines whether considering just one component of the projected eigenvector is not too restrictive. If the ratio is close to unity then it signifies that the eigenvalue considered is indeed just associated to the mode. Conversely if the ratio is less than 1 then it means that the eigenvalue influences a wider range of frequencies.

Second, by comparing the maximum of the coefficients, we acknowledge that there are contributions from the other eigenvalues to the mode considered. Assigning each eigenvalue to a single mode leads to an erroneous decoupling of the modes. To represent the degree to which the modes are decoupled, we examine the ratio of ${\bar{v}}_{ij}$ for Mode $i$ to the sum of the coefficients $i$ across all eigenvectors. Thus we define $R_{energy}(i,k)$ as,
\begin{equation}\label{eq:ratio energy}
R_{energy}\left(i,k\right)=\dfrac{{\bar{v}}_{i,j}^{2}}{\sum_{m=1}^{N_{p}}{\bar{v}}_{i,m}^{2}},
\end{equation}
where $R_{energy}\left(i,k\right)$ is the ratio of Mode $i$ for a wave number $k$. If this ratio is close to 1, then the other modes do not affect the current one. Although both ratios enable us to see how the modes are coupled, they provide us with different insights: while $R_{mode}$ indicates the amount by which a mode is restrainted to its domain of frequency, $R_{energy}$ provides how much information is lost by considering one eigenvalue to one mode. If both ratios are equal to unity then the mode considered has an influence on its own domain of frequency and the other modes have no influence over it. Since Figure~\ref{fig:dissipation p=1 Legendre} was obtained via Legendre decomposition, we can in fact represent it in the extended range.

\begin{figure}[H]
\centering
\includegraphics[width=15cm]{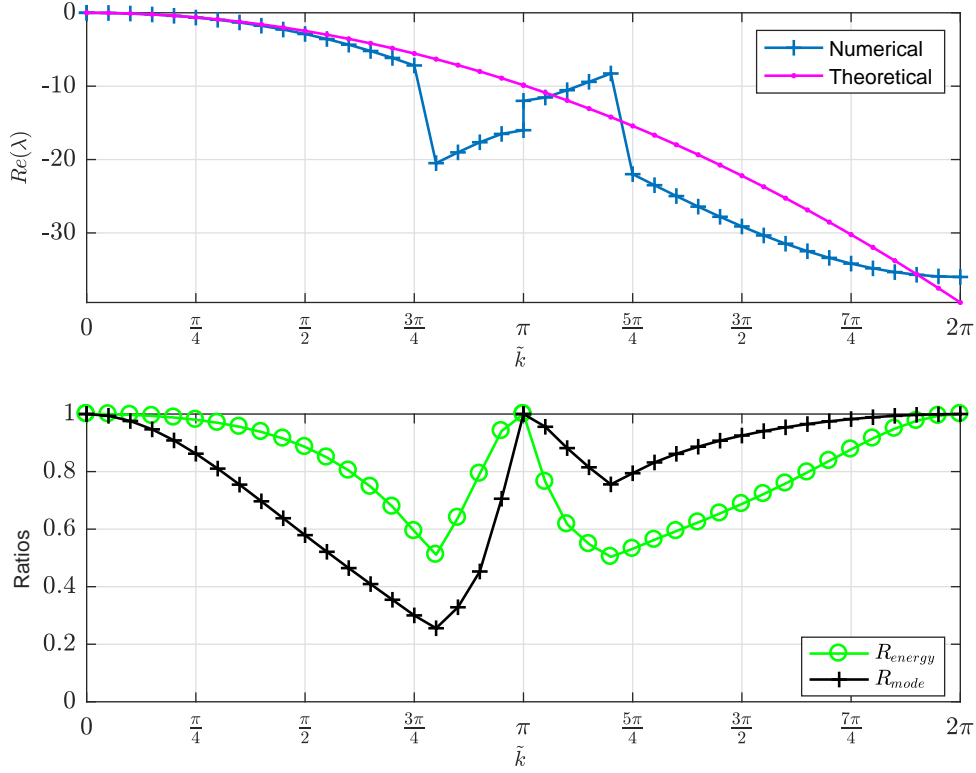}
\caption{Dissipation response along $\tilde{k}$ for $c_{DG}/\kappa_{DG}$, $\tau=4\tau_{theory}$ and $p=1$.}
\label{fig:dissipation p=1 extended}
\end{figure}

To the authors' opinion, Figure~\ref{fig:dissipation p=1 extended} provides, in the simplest way, all the information that we can recover from von Neumann analysis. By projecting the eigenvectors in Legendre polynomial basis, we obtain the modes with a consistent algorithm. The uncoupled dissipative response of the scheme is compared against the theory when the modes are presented in the extended domain. Lastly, by representing the ratios, we enable the readers to observe that for most of the extended domain the modes are coupled. Note that the discontinuities in Figure~\ref{fig:dissipation p=1 extended} are of two types, where the jump at $\pi$ is due to the disparate values of the real part of the eigenvalue between modes 1 and 2, while at the jumps at $\frac{3\pi}{4}$ and $\frac{5\pi}{4}$ are due to the shifts of the maximum coefficients of the Legendre polynomial between eigenvalues.

In the following analysis, we investigate the influence of $\tau$ and $c$. The comparison will bear on the dissipation response for decoupled modes (enlightened by the representation of the ratios). Figure~\ref{fig:Dissipation_extended_DG_tau_th} is our reference graph, where, for high frequencies, the scheme is less dissipative than the theory. We observe five different jumps across the range of frequencies, the jumps at $\tilde{k}=\frac{3\pi}{4}$ and $\tilde{k}=\frac{11\pi}{4}$ are due to shifts between modes 1 and 3. The same can be said for the jumps at $\tilde{k}=\frac{3\pi}{2}$ and $\tilde{k}=\frac{5\pi}{2}$, where the shifts are between modes 2 and 3. These shifts occur when the value of the coefficients $\bar{v}_{ij}$, between modes are very similar, which causes a selection of a different eigenvalue to represent a particular mode. The fifth and final jump or discontinuity occurs at $2\pi$ and is due to the difference in dissipation response between the modes at the $k=\pi$ frequency. By increasing $\tau$, we retrieve the results in Figure~\ref{fig:Dissipation_DG_tau_th_1_5}. For low $\tilde{k}$, the response is unaffected by the penalty term. However on the domain $\left[\frac{\pi}{2}, 3\pi\right]$, the response is more dissipative compared to the lower $\tau$ case as shown in Figure~\ref{fig:Dissipation_extended_DG_tau_th}. We also observe a smoother dissipation curve with two discontinuities for $\tilde{k}$ close to $\pi$ and $2\pi$, where mode 1 represents the region in the range $[0,\pi]$, while $[\pi,2\pi]$ is reserved for mode 2 and the last interval for mode 3. Unlike the jumps noted above these discontinuities are due to disparate values of the dissipation response at the $k=\pi$ frequency. For $\tilde{k}$ close to $2\pi$, both ratios are close to 1. We see that mode 2 is a bit over-dissipative. Therefore as expected, raising the value of $\tau$ increases the dissipation; the ratios of $R_{energy}\left(i,k\right)$ and $R_{mode}\left(i,k\right)$ have a similar shape, compared to Figure~\ref{fig:Dissipation_extended_DG_tau_th}, in the range of $[0,\frac{5\pi}{4}]$, but change in a non-monotonic fashion in the rest of the extended domain with a general trend towards a greater coupling of the modes.
 
We now investigate the impact of parameter $c$. Numerically, increasing $c$ leads to dampening the highest degree of polynomial in the test function~\cite{zwanenburg_equivalence_2016}. Referring to Figure~\ref{fig:Dissipation_c+_kappa+_tau_th}, we observe a slight jump for $\tilde{k}\sim\frac{\pi}{2}$. For higher $\tilde{k}$, the response becomes less dissipative compared to Figure~\ref{fig:Dissipation_extended_DG_tau_th}. We obtained a similar result for $p=1$ where the eigenvalues were computed analytically. The ratios of $R_{energy}\left(i,k\right)$ and $R_{mode}\left(i,k\right)$ remains mostly unperturbed at lower frequencies, $[0,\pi]$ but we obtain a reduction of the coupling at higher frequencies. 

In conclusion, we can make the following general observations of the impact the parameters $\tau$ and $c$ have on the dissipative properties of the scheme. First, for all considered values of these parameters, we verify $\operatorname{Re}(\lambda)\leq 0$ and the scheme remains stable. Second, as $\tau$ increases, the Bloch wave dissipation relation demonstrates that the scheme becomes more dissipative. Lastly, raising $c$ results in less dissipation.

\begin{figure}[H]
\centering
\includegraphics[width=14cm]{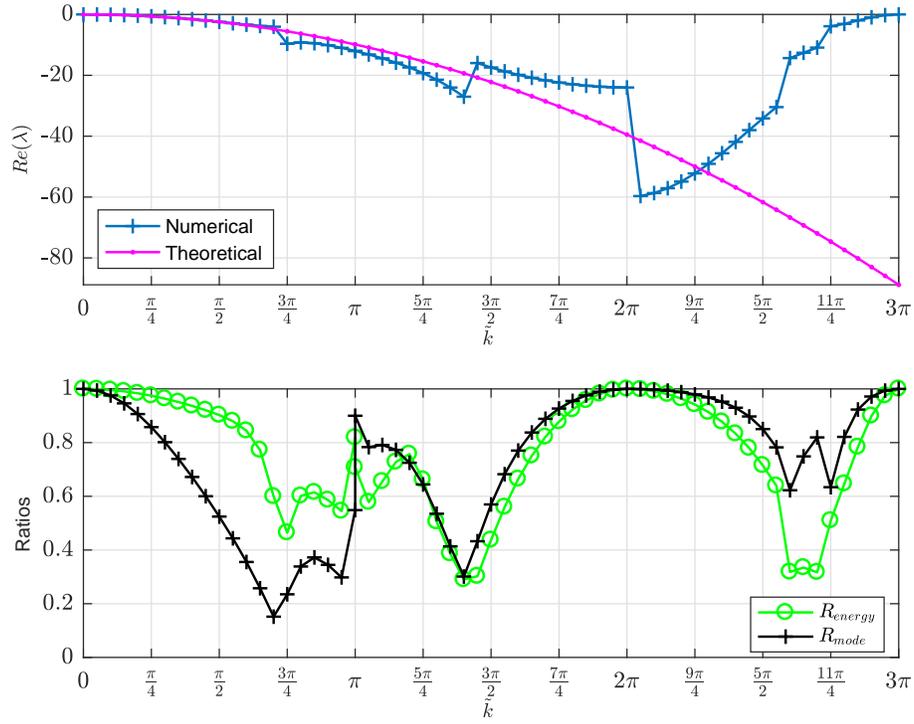}
\caption{Dissipation response along $\tilde{k}$ for $c_{DG}/\kappa_{DG}$, $\tau=\tau_{theory}$ and $p=2$.}
\label{fig:Dissipation_extended_DG_tau_th}
\end{figure}

\begin{figure}[H]
\centering
\includegraphics[width=14cm]{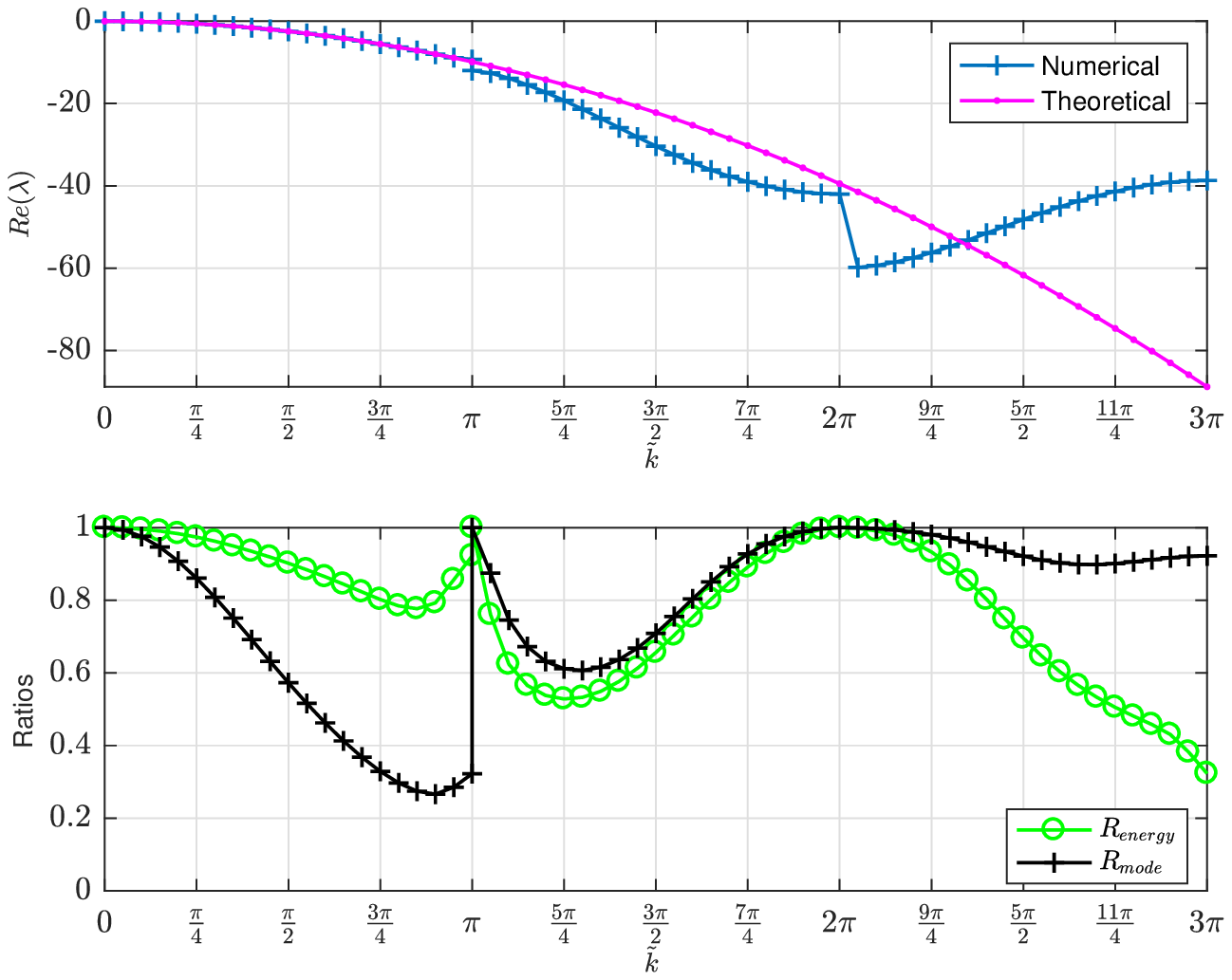}
\caption{Dissipation response along $\tilde{k}$ for $c_{DG}/\kappa_{DG}$, $\tau=1.5\tau_{theory}$ and $p=2$.}
\label{fig:Dissipation_DG_tau_th_1_5}
\end{figure}

\begin{figure}[H]
\centering
\includegraphics[width=14cm]{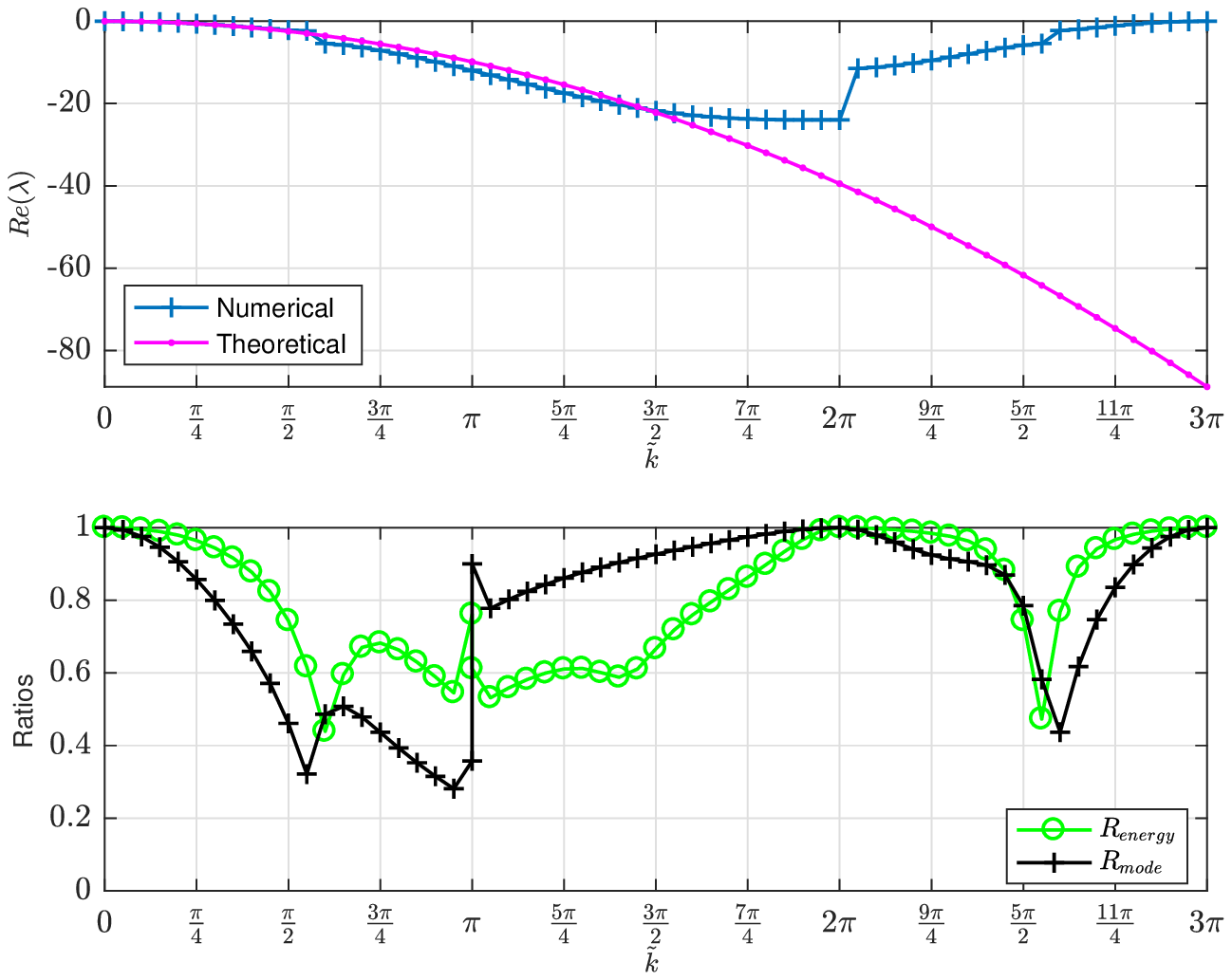}
\caption{Dissipation response along $\tilde{k}$ for $c_{+}/\kappa_{DG}$, $\tau=\tau_{theory}$ and $p=2$.}
\label{fig:Dissipation_c+_kappa+_tau_th}
\end{figure}

\subsection{Maximal time steps}
As explained in Section~\ref{sec:IP numerical experiment} we use the RK54 scheme as our time solver, through~\eqref{eq:equation S}, and obtain,
\begin{equation}\label{eq:Temporal scheme}
\mathbf{\hat{u}}_{n}^{t_{n+1}}=\mathbf{M}\left(k,\Delta t\right)\mathbf{\hat{u}}_{n}^{t_{n}},
\end{equation}
where, $\mathbf{M}$ is defined as,
\begin{equation}
\mathbf{M}\left(k,\Delta t\right)=1+\Delta t\,\mathbf{S}\left(k\right)+\dfrac{1}{2!}\left(\Delta t\,\mathbf{S}\left(k\right)\right)^{2}+\dfrac{1}{3!}\left(\Delta t\,\mathbf{S}\left(k\right)\right)^{3}+\dfrac{1}{4!}\left(\Delta t\,\mathbf{S}\left(k\right)\right)^{4}+\dfrac{1}{200}\left(\Delta t\,\mathbf{S}\left(k\right)\right)^{5},
\end{equation}
$\mathbf{\hat{u}}_{n}^{t_{n+1}}$ represents the column vector solution on the element $\Omega_{n}$ at time $t_{n+1}$, $\mathbf{\hat{u}}_{n}^{t_{n}}$ denotes the column vector solution on the element $\Omega_{n}$ at time $t_{n}$ and  $\Delta t$ represents the time step.

The matrix $\mathbf{M}$ depends on both the wave number $k$ and the time step $\Delta t$, where $k$ varies between $\left[0,2\pi\right]$ and any value can be chosen for $\Delta t$. Then the matrix $\mathbf{M}$ is decomposed in terms of its eigenvalues ($\lambda_{\mathbf{M}}$) and eigenvectors. To ensure stability of the scheme~\eqref{eq:Temporal scheme}, the modulus of the spectral radius of $\mathbf{M}$ must be less than 1. Starting at an initial $\Delta t_{0}$ sufficiently high to produce an unstable solution, we scan over the range of $k$ and compute the largest eigenvalues of $\mathbf{M}$: $\lambda_{\mathbf{M},max}$, then decrease $\Delta t$ as long as $\lambda_{\mathbf{M},max}\geq 1$. Applying this procedure, we obtain the maximal time step, $\Delta t_{max}$ for combinations of the parameters $c, \kappa$, and $\tau$. Four values of $c$: $c_{DG}$, $c_{SD}$, $c_{HU}$, and $c_{+}$ will be studied. For each value of $c$, two values for $\kappa$ ($\kappa_{DG}$ and $\kappa_{+}$) are considered.  Finally, three values of $\tau$ ($\tau_{theory}$, $1.1\tau_{theory}$ and $1.5\tau_{theory}$) are used for each combination of $c$ and $\kappa$ to better understand the impact of the value that controls the jump in the solution when evaluating the common viscous numerical flux. 

\begin{table}[H]
\resizebox{\textwidth}{!}{
%\centering
%\makebox[\textwidth][c]
%{
\begin{tabular}{|c|c||c|c|c||c|c|c|}
\hline
\multicolumn{2}{|c||}{$p$} &\multicolumn{3}{c||}{2} &\multicolumn{3}{c|}{3}\\
\hline
$c$ &$\kappa$ &$\tau_{theory}^{*}$&1.1$\tau_{theory}^{*}$ &1.5$\tau_{theory}^{*}$ &$\tau_{theory}^{*}$ &1.1$\tau_{theory}^{*}$ &1.5$\tau_{theory}^{*}$ \\ 
\hline
\multirow{2}{*}{$c_{DG}$} &$\kappa_{DG}$&7.76e-02 &7.76e-02 &7.76e-02 &2.74e-02 &2.74e-02 &2.74e-02 \\
\cline{2-8}
&$\kappa_{+}$&7.76e-02 &7.76e-02 &7.76e-02 &2.74e-02 &2.74e-02 &2.74e-02 \\
\hline
\multirow{2}{*}{$c_{SD}$} &$\kappa_{DG}$&1.29e-01 &1.29e-01 &1.11e-01 &4.74e-02 &4.74e-02 &3.19e-02 \\
\cline{2-8}
&$\kappa_{+}$&1.29e-01 &1.29e-01 &1.11e-01 &4.74e-02 &4.74e-02 &3.19e-02 \\
\hline
\multirow{2}{*}{$c_{HU}$} &$\kappa_{DG}$&1.82e-01 &1.68e-01 &1.11e-01 &5.62e-02 &5.15e-02 &3.19e-02 \\
\cline{2-8}
&$\kappa_{+}$&1.82e-01 &1.68e-01 &1.11e-01 &5.62e-02 &5.15e-02 &3.19e-02 \\
\hline
\multirow{2}{*}{$c_{+}$} &$\kappa_{DG}$&1.94e-01 &1.69e-01 &1.11e-01 &5.99e-02 &5.25e-02 &3.19e-02 \\
\cline{2-8}
&$\kappa_{+}$&1.94e-01 &1.69e-01 &1.11e-01 &5.99e-02 &5.25e-02 &3.19e-02 \\
\hline
\end{tabular}
}
%}
\caption{Maximal time step for the IP scheme.}
\label{tab:Maximal time step}
\end{table}

Table~\ref{tab:Maximal time step} presents the maximum time step, $\Delta t_{max}$, for the various combinations of $c, \kappa,$ and $\tau$ at polynomial orders $p=2$ and $3$. From Table~\ref{tab:Maximal time step}, we present three primary observations. First, as expected, larger values of $c$ lead to larger time steps, where $\Delta t_{max}$ is achieved at $c_{+}/\kappa_{+}$. By increasing the parameter $c$, the highest mode is dampened, and thus instabilities provided by a large $\Delta t$ are dampened. In fact for the method $c_{+}$ we have a $\Delta t_{max}$ which is 2.5 times larger than $\Delta t_{max}$ for $c_{DG}$ with $p=2$ and 2.2 times for $p=3$. Second,  increasing $\tau$ lowers the $\Delta t_{max}$. This is rather counter-intuitive since increasing $\tau$ introduces more dissipation and hence instabilities coming from larger $\Delta t$ are expected to be compensated. Third, as expected from section~\ref{sec:independent kappa} $\kappa$ has no influence. We may also notice that the lower the value of parameter $c$, the lesser the influence of $\tau$ on the maximal time step.

\begin{figure}[H] % "[t!]" placement specifier just for this example
\centering

\begin{subfigure}{0.49\textwidth}
\centering
\includegraphics[width=\linewidth]{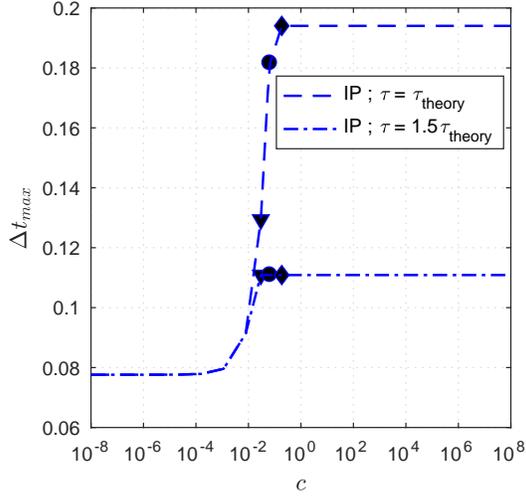}
\caption{$\Delta t_{max}$ for $p=2$.} \label{fig:dtmax DG p2 c}
\end{subfigure}\hspace*{\fill}
\begin{subfigure}{0.49\textwidth}
\centering
\includegraphics[width=\linewidth]{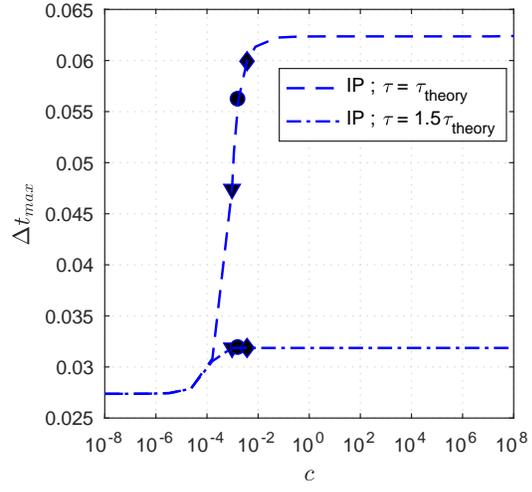}
\caption{$\Delta t_{max}$ for $p=3$.} \label{fig:dtmax DG p3 c}
\end{subfigure}
\caption{$\Delta t_{max}$ along $c$ for different $p$, the lower triangle is for $c=c_{SD}$ the circle is for $c=c_{HU}$ and the diamond for $c=c_{+}$; using log scale the DG case couldn't be represented.}
\label{fig:plot dt max c}
\end{figure}

\begin{figure}[H] % "[t!]" placement specifier just for this example
\centering

\begin{subfigure}{0.49\textwidth}
\centering
\includegraphics[width=\linewidth]{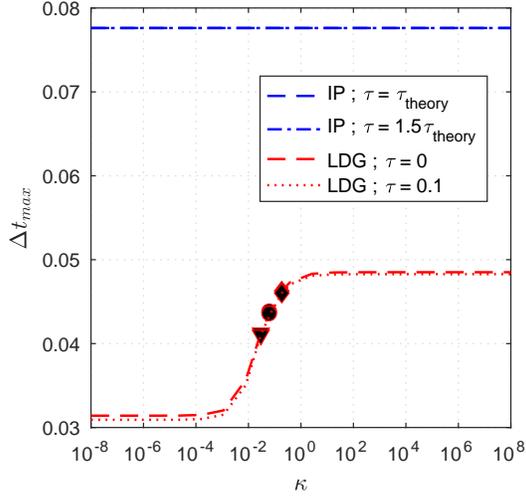}
\caption{$\Delta t_{max}$ for $c=c_{DG}$ $p=2$.} \label{fig:dtmax DG p2}
\end{subfigure}\hspace*{\fill}
\begin{subfigure}{0.49\textwidth}
\centering
\includegraphics[width=\linewidth]{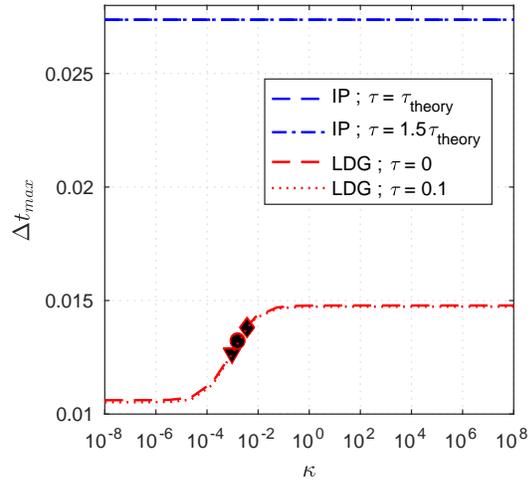}
\caption{$\Delta t_{max}$ for $c=c_{DG}$ $p=3$.} \label{fig:dtmax DG p3}
\end{subfigure}

\medskip
\begin{subfigure}{0.49\textwidth}
\centering
\includegraphics[width=\linewidth]{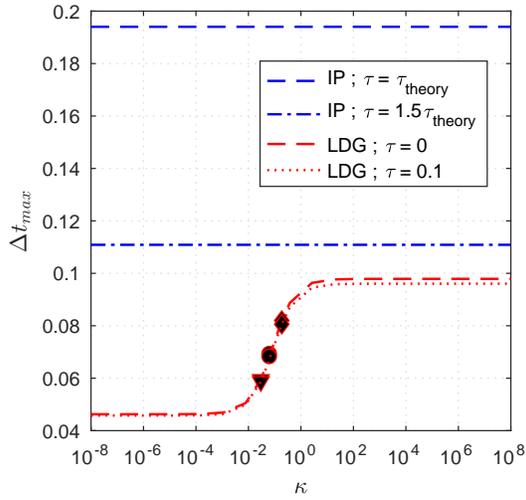}
\caption{$\Delta t_{max}$ for $c=c_{+}$ $p=2$.} \label{fig:dtmax c+ p2 cited}
\end{subfigure}\hspace*{\fill}
\begin{subfigure}{0.49\textwidth}
\centering
\includegraphics[width=\linewidth]{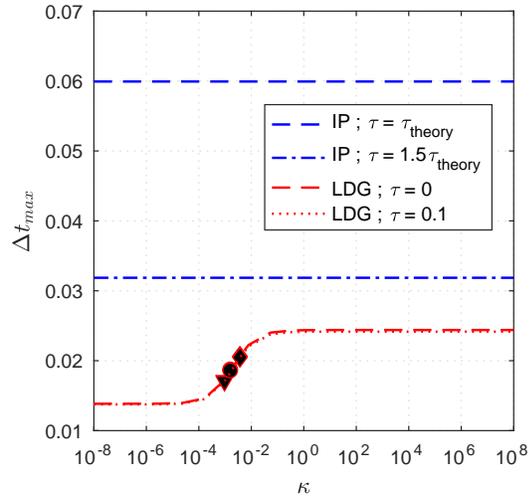}
\caption{$\Delta t_{max}$ for $c=c_{+}$ $p=3$.} \label{fig:dtmax c+ p3 cited}
\end{subfigure}
\caption{$\Delta t_{max}$ along $\kappa$ for different $c$ and $p$, the lower triangle is for $\kappa=\kappa_{SD}$ the circle is for $\kappa=\kappa_{HU}$ and the diamond for $\kappa=\kappa_{+}$; using log scale the DG case couldn't be represented.}
\label{fig:plot dt max}
\end{figure}

Figure~\ref{fig:plot dt max c} studies the influence of both parameters $c$ and $\tau$ on the maximal time-step. As expected increasing $c$ leads to higher time steps. However for $c\geq c_{+}$, $\Delta t_{max}$ reaches a threshold for both values of $p$. For most of the range of $c$, increasing $\tau$ leads to a reduction in the maximal time step. However for very low values of $c$ (such as $c_{DG}$), this effect is not observed.

Figure~\ref{fig:plot dt max} represents the numerical values of the IP scheme (Table~\ref{tab:Maximal time step}) along with the values for the LDG scheme (see~\eqref{eq:numerical fluxes} for the numerical flux, here we chose $\beta=\frac{1}{2}$). For better clarity, only two values of $\tau$ are represented for both schemes. For both the LDG and the IP numerical fluxes, increasing $\tau$ leads to a reduction in the maximal time step; however, we observe that this effect is much more dominant for the IP scheme (Figure~\ref{fig:dtmax c+ p2 cited} and~\ref{fig:dtmax c+ p3 cited}). Therefore, it is preferable to use the lowest value of $\tau$ possible. We notice that the IP scheme has a higher performance in terms of maximal time step compared to the LDG scheme.

\section{$L_{2}$ errors and orders of accuracy}\label{sec:L2 errors}
In this section, investigation on the accuracy and explicit time step limits of the ESFR schemes for the diffusion equation are presented. In particular, we compare the IP/BR2 schemes on a one-dimensional domain compared to the LDG scheme $\left(\beta=\frac{1}{2}\right)$ through a careful study of the impact of the parameters, $\tau$ and $c$.  The problem considered is the same as in Section~\ref{sec:IP numerical experiment}. We employ LGL nodes and the time step has been chosen such that it corresponds to $\Delta t_{max}$. Time advancement was achieved through the RK54 method.

The $L_{2}$-error was computed using LGL quadrature with order $p$ (which provides sufficient strength),
\begin{equation}
\displaystyle{
L_{2}-error=\sqrt{\sum_{n=1}^{N_{K}}\int_{x_{n}}^{x_{n+1}}\left(u_{n}-u_{exact}\right)^{2} \mbox{d}x}
=\sqrt{\sum_{n=1}^{N_{K}}\left(\mathbf{\hat u}^T_{n}-\mathbf{\hat u}^T_{exact}\right)\mathbf{W}\left(\mathbf{\hat u}_{n}-\mathbf{\hat u}_{exact}\right)},
}
\end{equation}
where $\mathbf{W}$ is the quadrature matrix of size $N_{p}\times N_{p}$. The following results may be extensive but the authors prefer to provide the exact numbers, to aid the research community to replicate the results. Both parameters $c$ and $\kappa$ are modified, ranging from $c_{DG}$ to $c_{+}$ and for $\kappa_{DG}$ and $\kappa_{+}$. For brevity, only the results of three meshes are provided: $N_{K} \in \left\lbrace 32,64,128 \right\rbrace$. We also provide the order of accuracy which is calculated based on the adjacent mesh size, as well as the maximal time step for 32 elements. Maximal time steps were evaluated through an iterative approach while ensuring that the solution remains bounded at $t=1$. The numerical simulations have been replicated for two values of $\tau$: $\tau_{theory}$ provided in~\eqref{eq:criterion IP} and $1.5\tau_{theory}$, for both $p=2$ and $p=3$.  
\begin{table}[H]
\resizebox{\textwidth}{!}{
%\centering
%\makebox[\textwidth][c]
%{
\begin{tabular}{|c|c||c|c|c|ccc|c||c|c|c|ccc|c|}
\hline
& &\multicolumn{7}{c||}{$\tau_{theory}$} &\multicolumn{7}{c|}{$1.5\tau_{theory}$} \\
\hline
$c$ &$\kappa$ &$N_{K}$=32 &$N_{K}$=64 &$N_{K}$=128 &\multicolumn{3}{c|}{OOA} &$\Delta t_{max}$ &$N_{K}$=32 &$N_{K}=64$ &$N_{K}$=128 &\multicolumn{3}{c|}{OOA} &$\Delta t_{max}$ \\
\hline
\multirow{2}{*}{$c_{DG}$} &$\kappa_{DG}$&1.54e-04 &1.92e-05 &2.40e-06 &-&3.01 &3.00 &3.00e-03 &9.24e-05 &1.16e-05 &1.44e-06 &-&2.99 &3.02 &3.00e-03 \\
\cline{2-16}
&$\kappa_{+}$&1.54e-04 &1.92e-05 &2.40e-06 &-&3.01 &3.00 &3.00e-03 &9.24e-05 &1.16e-05 &1.44e-06 &-&2.99 &3.02 &3.00e-03 \\
\hline
\multirow{2}{*}{$c_{SD}$}&$\kappa_{DG}$&1.46e-04 &1.82e-05 &2.27e-06 &-&3.00 &3.00 &5.00e-03 &9.95e-05 &1.21e-05 &1.56e-06 &-&3.04 &2.96 &4.20e-03 \\
\cline{2-16}
&$\kappa_{+}$&1.46e-04 &1.82e-05 &2.27e-06 &-&3.00 &3.00 &5.00e-03 &9.95e-05 &1.21e-05 &1.56e-06 &-&3.04 &2.96 &4.20e-03 \\
\hline
\multirow{2}{*}{$c_{HU}$}&$\kappa_{DG}$&1.35e-04 &1.68e-05 &2.09e-06 &-&3.01 &3.00 &7.00e-03 &8.48e-05 &1.03e-05 &1.32e-06 &-&3.04 &2.96 &4.20e-03 \\
\cline{2-16}
&$\kappa_{+}$&1.35e-04 &1.68e-05 &2.09e-06 &-&3.01 &3.00 &7.00e-03 &8.48e-05 &1.03e-05 &1.32e-06 &-&3.04 &2.96 &4.20e-03 \\
\hline
\multirow{2}{*}{$c_{+}$}&$\kappa_{DG}$&1.14e-04 &1.44e-05 &1.79e-06 &-&2.98 &3.01 &7.50e-03 &6.61e-05 &7.94e-06 &9.99e-07 &-&3.06 &2.99 &4.20e-03 \\
\cline{2-16}
&$\kappa_{+}$&1.14e-04 &1.44e-05 &1.79e-06 &-&2.98 &3.01 &7.50e-03 &6.61e-05 &7.94e-06 &9.99e-07 &-&3.06 &2.99 &4.20e-03 \\
\hline
\end{tabular}
}
\caption{$L_{2}$-errors $p=2$.}
\label{tab:L2 Np3}
\end{table}

\begin{table}[H]
\resizebox{\textwidth}{!}{
%\centering
%\makebox[\textwidth][c]
%{
\begin{tabular}{|c|c||c|c|c|ccc|c||c|c|c|ccc|c|}
\hline
& &\multicolumn{7}{l||}{$\tau_{theory}$} &\multicolumn{7}{l|}{$1.5\tau_{theory}$} \\
\hline
$c$ &$\kappa$ &$N_{K}$=32 &$N_{K}$=64 &$N_{K}$=128 &\multicolumn{3}{c|}{OOA} &$\Delta t_{max}$ &$K$=32 &$K=64$ &$K$=128 &\multicolumn{3}{c|}{OOA} &$\Delta t_{max}$ \\
\hline
\multirow{2}{*}{$c_{DG}$} &$\kappa_{DG}$&1.60e-05 &2.00e-06 &2.49e-07 &-&3.01 &3.00 &1.00e-03 &1.51e-06 &9.79e-08 &6.13e-09 &-&3.94 &4.00 &1.00e-03 \\
\cline{2-16}
&$\kappa_{+}$&1.60e-05 &2.00e-06 &2.49e-07 &-&3.01 &3.00 &1.00e-03 &1.51e-06 &9.79e-08 &6.13e-09 &-&3.94 &4.00 &1.00e-03 \\
\hline
\multirow{2}{*}{$c_{SD}$}&$\kappa_{DG}$&1.47e-05 &1.84e-06 &2.30e-07 &-&3.00 &3.00 &1.80e-03 &1.37e-06 &8.59e-08 &5.44e-09 &-&3.99 &3.98 &1.20e-03 \\
\cline{2-16}
&$\kappa_{+}$&1.47e-05 &1.84e-06 &2.30e-07 &-&3.00 &3.00 &1.80e-03 &1.37e-06 &8.59e-08 &5.44e-09 &-&3.99 &3.98 &1.20e-03 \\
\hline
\multirow{2}{*}{$c_{HU}$} &$\kappa_{DG}$&1.31e-05 &1.64e-06 &2.04e-07 &-&3.00 &3.00 &2.10e-03 &1.16e-06 &7.31e-08 &4.63e-09 &-&3.99 &3.98 &1.20e-03 \\
\cline{2-16}
&$\kappa_{+}$&1.31e-05 &1.64e-06 &2.04e-07 &-&3.00 &3.00 &2.10e-03 &1.16e-06 &7.31e-08 &4.63e-09 &-&3.99 &3.98 &1.20e-03 \\
\hline
\multirow{2}{*}{$c_{+}$}&$\kappa_{DG}$&8.28e-06 &1.02e-06 &1.28e-07 &-&3.01 &3.01 &2.30e-03 &8.39e-07 &5.25e-08 &3.32e-09 &-&4.00 &3.98 &1.20e-03 \\
\cline{2-16}
&$\kappa_{+}$&8.28e-06 &1.02e-06 &1.28e-07 &-&3.01 &3.01 &2.30e-03 &8.39e-07 &5.25e-08 &3.32e-09 &-&4.00 &3.98 &1.20e-03 \\
\hline
\end{tabular}
}
\caption{$L_{2}$-errors $p=3$.}
\label{tab:L2 Np4}
\end{table}

Tables \ref{tab:L2 Np3} and \ref{tab:L2 Np4} list the absolute $L_{2}$-errors for each grid, order of accuracy (OOA) and the maximum time step, $\Delta t_{max}$.  The order of convergence (OOA) is evaluated between 32/64 and 64/128 elements. The information from the tables can be summarized in the following two primary points. First, the relative error between $\Delta t_{max}$ obtained in these numerical experiments is less than 5.2\% against the results obtained from von Neumann analysis in Table~\ref{tab:Maximal time step}. Second, at even degree of polynomials for $c\ge c_{DG}$ and at both $\tau_{theory}$ and $1.5\tau_{theory}$ the OOA is $p+1$. At uneven degree of polynomials for all combinations of $c\ge c_{DG}$ and $\tau_{theory}$, one order of convergence is lost. This loss is recovered when $\tau$ is increased to $1.5\tau_{theory}$. Surprisingly, we observe that $c_{+}$ yields the least $L_{2}$-error. Further investigations have been conducted and the authors have observed that for $c\geq c_{+}$, the $L_{2}$-errors resume to increase. As expected, $\kappa$ has no influence. For the case $c_{DG}$, we observe that $\tau$ has an influence on the $L_{2}$-error but not on $\Delta t_{max}$.

\begin{figure}[H]
\centering

\begin{subfigure}{0.48\textwidth}
\centering
\includegraphics[width=3in]{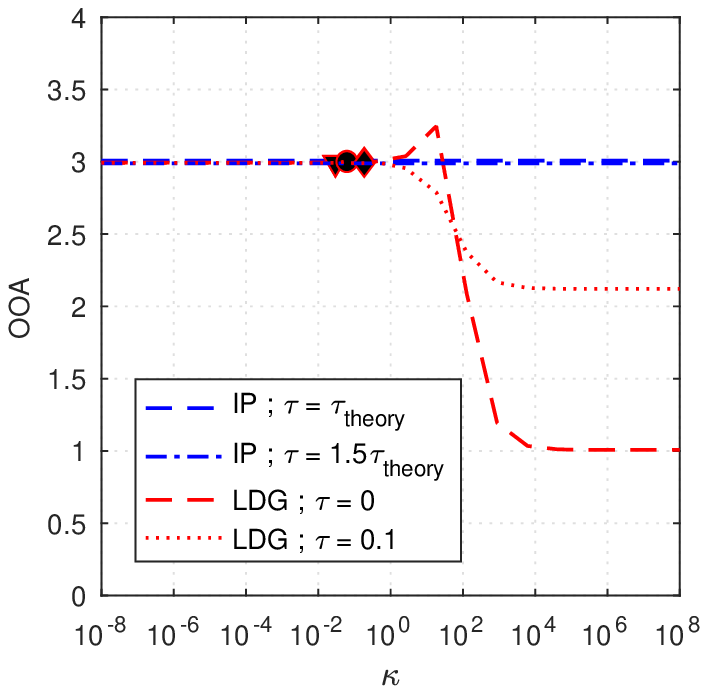}
\caption{Order of accuracy for $c=c_{+}$, $p=2$.} \label{fig:OOA DG p2}
\end{subfigure}\hspace*{\fill}
\begin{subfigure}{0.48\textwidth}
\centering
\includegraphics[width=3in]{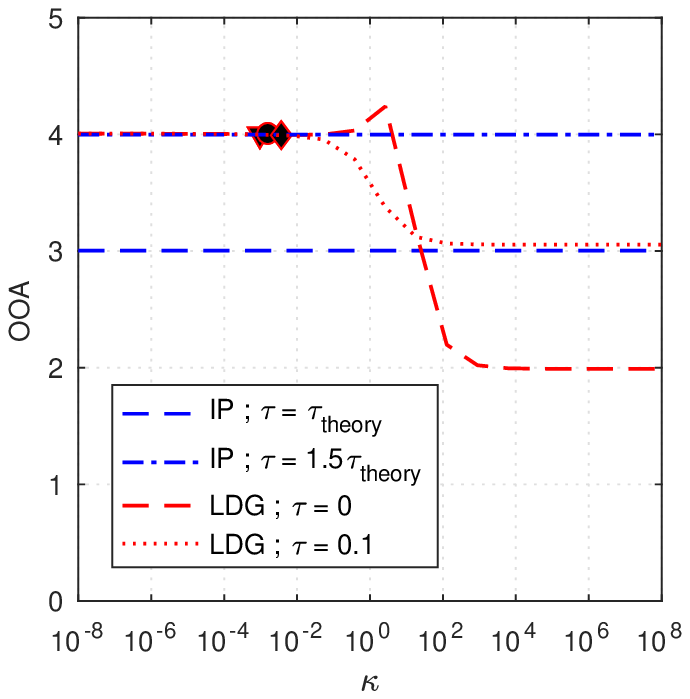}
\caption{Order of accuracy for $c=c_{DG}$, $p=3$.} \label{fig:OOA_p3_DG_IP_LDG_without_title.eps}
\end{subfigure}\hspace*{\fill}
\caption{OOA along $\kappa$ for different $c$ and $p$, the lower triangle is for $\kappa=\kappa_{SD}$, the circle is for $\kappa=\kappa_{HU}$ and the diamond for $\kappa=\kappa_{+}$; using log scale the DG case couldn't be represented.}
\label{fig:OOA}
\end{figure}

We provide a comparison of the order of accuacy between the IP and the LDG numerical fluxes. Orders are evaluated for $\kappa$ in the range $[10^{-8},10^{8}]$ and two values of $\tau$ at $\tau_{theory}$ and $1.5\tau_{theory}$. The OOA is calculated between the two meshes: 64 and 128 elements. For brevity, only the cases $c=c_{+}$; $p=2$ and $c=c_{DG}$; $p=3$ are presented in~Figure~\ref{fig:OOA}. Similar trends are observed for other values of $c$ and for higher $p$. Several observations can be drawn from~Figure~\ref{fig:OOA}. First, at even orders and for both values of $\tau$, the IP/BR2 schemes maintain a $p+1$ order. This is unlike LDG, where the orders are lost for $\kappa > \kappa_{+}$ for any $\tau$. Second, for uneven orders (~Figure~\ref{fig:OOA_p3_DG_IP_LDG_without_title.eps}), the IP/BR2 schemes lose an order for $\tau_{theory}$, but recover the lost order when $\tau$ is increased. Numerical simulations have demonstrated that $\tau=1.1 \tau_{theory}$ is sufficient to provide a constant expected value of OOA for any $p$; however, too large a value of $\tau$ diminishes $\Delta t_{max}$, as seen in Figure~\ref{fig:plot dt max}.

\begin{figure}[H]
\centering
\includegraphics[width=4in]{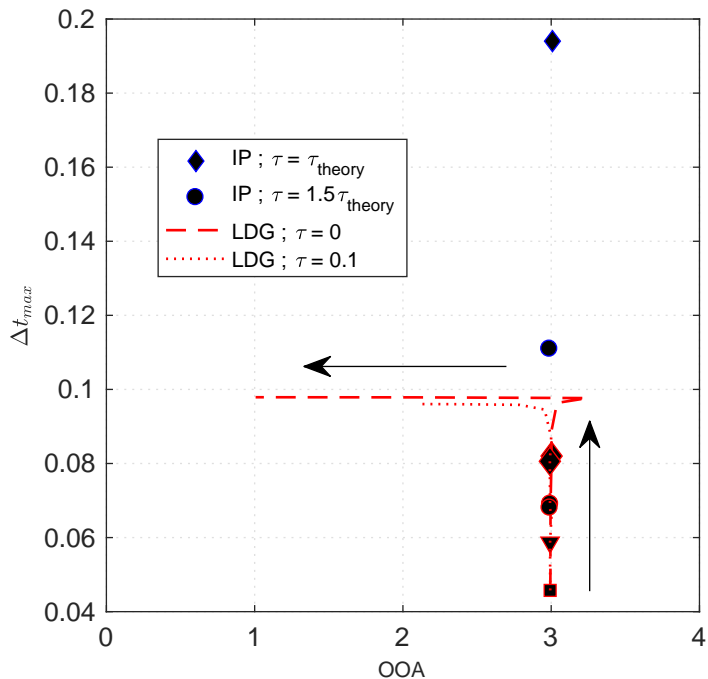}
\caption{Maximal time-step versus the order of accuracy for $p=2$ with $c=c_{+}$; the square is for $\kappa=\kappa_{DG}$, the lower triangle is for $\kappa=\kappa_{SD}$, the circle is for $\kappa=\kappa_{HU}$ and the diamond for $\kappa=\kappa_{+}$; the black arrows indicate the direction of increasing $\kappa$.}
\label{fig:dt VS OOA p2}
\end{figure}

\begin{figure}[H]
\centering
\includegraphics[width=4in]{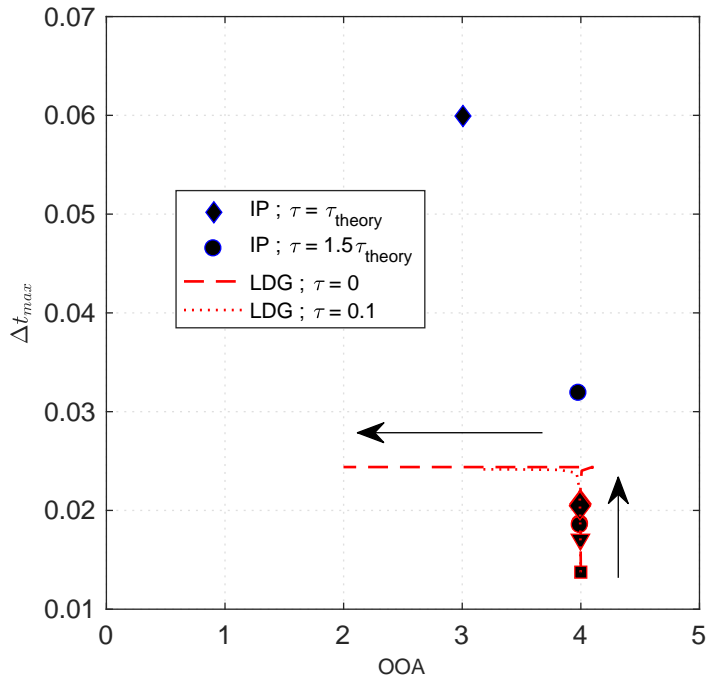}
\caption{Maximal time-step versus the order of accuracy for $p=3$ with $c=c_{+}$; the square is for $\kappa=\kappa_{DG}$, the lower triangle is for $\kappa=\kappa_{SD}$, the circle is for $\kappa=\kappa_{HU}$ and the diamond for $\kappa=\kappa_{+}$; the black arrows indicate the direction of increasing $\kappa$.}
\label{fig:dt VS OOA p3}
\end{figure}

One way to compare the schemes and better visualize the impact of $\tau$ on both the OOA and $\Delta t_{max}$ is through figures \ref{fig:dt VS OOA p2} and \ref{fig:dt VS OOA p3}.  We combine the previous figures and plot the maximal time step (obtained with the von Neumann analysis) versus the order of accuracy. One main conclusion can be summarized from these figures: for any order $p$, the IP/BR2 numerical fluxes provide a higher time step at the correct OOA than the LDG numerical fluxes.
\section{Conclusion}
This article provides a theoretical proof of energy stability for the diffusion case using the IP numerical fluxes. This proof was then extended to the BR2 numerical fluxes. For both these numerical fluxes, the problem does not depend on $\kappa$. Numerical simulations have shown that pairing $c$ close to $c_{+}$ with $\tau$ slightly greater than $\tau_{theory}$ provides both a high time step and a correct OOA. Further investigations will be conducted in higher dimensions.
\section*{Acknowledgements}
We would like to acknowledge the final support of Natural Sciences and Engineering Research Council of Canada Discovery Grant Program and McGill University.

\appendix

\section*{Appendix}
\section{Properties of Legendre polynomials}\label{sec:Properties Legendre polynomial}
Throughout this work, we use ESFR correction functions, defined by Legendre polynomials of degree $p$: $\Psi_{p}$.

Let us remind some properties of Legendre polynomial. The proof are omitted but can be found in any basic mathematical book.
\begin{enumerate}[label=Property \thesection .\arabic*,itemindent=*]
  \item Recurrence formula: Legendre polynomial follows Bonnet's recurrence formula\label{prop:Bonnet}
\begin{equation}\label{eq:Bonnet}
\left(p+1\right)\Psi_{p+1}\left(r\right)=\left(2p+1\right)\Psi_{p}\left(r\right)r-p\Psi_{p-1}\left(r\right).
\end{equation}
  \item \label{prop:symmetry}
  \begin{equation}\label{eq:symmetry}
  \Psi_{p}\left(-r\right)=\left(-1\right)^{p}\Psi_{p}\left(r\right).
  \end{equation}
  \item Differentiating equation~\eqref{eq:symmetry}\label{prop:derivative symetry}
  \begin{equation}\label{eq:derivative symetry}
  \Psi_{p}^{'}\left(r\right)=\left(-1\right)^{p+1}\Psi_{p}^{'}\left(-r\right).
  \end{equation}
  \item \label{prop:value +1 -1}
  \begin{equation}\label{eq:value +1 -1}
  \Psi_{p}\left(1\right)=1\,\,\text{and}\,\,\Psi_{p}\left(-1\right)=\left(-1\right)^{p}.
  \end{equation}
  \item Using~\ref{prop:Bonnet} and~\ref{prop:value +1 -1}, we can show by induction\label{prop:derivative value +1 -1}
\begin{equation}\label{eq:derivative value +1 -1}
\forall p\in\mathbb{N}, \Psi_{p}^{'}\left(1\right)=\frac{p\left(p+1\right)}{2} \,\,\text{and}\,\,\Psi_{p}^{'}\left(-1\right)=\left(-1\right)^{p+1}\frac{p\left(p+1\right)}{2}.
\end{equation}
\item \label{prop:integral legendre}
\begin{equation}\label{eq:integral legendre}
\displaystyle{
\int_{-1}^{1}\Psi_{i}\Psi_{j}\mbox{d}r=\dfrac{2}{2i+1}\delta_{ij},
}
\end{equation}
where $\delta_{ij}$ is the Kronecker delta.
\item\label{prop:derivative}It can be shown, using a generating function,
\begin{equation}
\Psi_{p+1}^{'}-\Psi_{p-1}^{'}=\left(2p+1\right)\Psi_{p}.
\end{equation}
\end{enumerate}
\section{ESFR correction functions}\label{sec:ESFR correction function}
With these properties, we obtain an interesting theorem for the ESFR correction functions.
\begin{definition}\label{def:Left}
A left-ESFR correction function, $g_{L}$ is, analytically, equal to: 
\begin{equation}\label{eq:left ESFR}
g_{L}=\dfrac{\left(-1\right)^{p}}{2}\left[\Psi_{p}-\left(\dfrac{\eta_{p,\kappa}\Psi_{p-1}+\Psi_{p+1}}{1+\eta_{p,\kappa}}\right)\right],
\end{equation}
where $\eta_{p,\kappa}=\frac{\kappa\left(2p+1\right)\left(a_{p}p!\right)^{2}}{2}$ and $a_{p}=\frac{\left(2p\right)!}{2^{p}\left(p!\right)^{2}}$.
\end{definition}
\begin{definition}\label{def:right}
A right-ESFR correction function, $g_{R}$ is, analytically, equal to
\begin{equation}\label{eq:right ESFR}
g_{R}=\dfrac{1}{2}\left[\Psi_{p}+\left(\dfrac{\eta_{p,\kappa}\Psi_{p-1}+\Psi_{p+1}}{1+\eta_{p,\kappa}}\right)\right].
\end{equation}
\end{definition}

\begin{thm}\label{thm:correction function}
Let $g_{L}$ be a left-ESFR correction function and $g_{R}$ be a right-ESFR correction function.
Then
\begin{eqnarray}
g_{L}^{'}\left(r\right)&=&-g_{R}^{'}\left(-r\right).\vspace{0.2cm}\label{eq:link correction function}
\end{eqnarray}
\end{thm}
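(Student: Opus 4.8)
The plan is to prove the identity by direct computation from the analytic forms of $g_{L}$ and $g_{R}$ supplied in Definitions~\ref{def:Left} and~\ref{def:right}, relying only on the behaviour of the Legendre polynomials under the reflection $r\mapsto -r$. First I would differentiate both definitions term by term with respect to $r$, writing $g_{L}^{'}$ and $g_{R}^{'}$ as linear combinations of $\Psi_{p-1}^{'}$, $\Psi_{p}^{'}$ and $\Psi_{p+1}^{'}$, carrying the respective prefactors $(-1)^{p}/2$ and $1/2$ out front. At this stage the only structural difference between the two is the prefactor and the sign joining the isolated $\Psi_{p}^{'}$ term to the $(\eta_{p,\kappa}\Psi_{p-1}^{'}+\Psi_{p+1}^{'})/(1+\eta_{p,\kappa})$ group.

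Next I would evaluate $g_{R}^{'}$ at the reflected argument $-r$ and invoke Property~\ref{prop:derivative symetry}, namely $\Psi_{m}^{'}(-r)=(-1)^{m+1}\Psi_{m}^{'}(r)$, for each of the three degrees $m\in\{p-1,\,p,\,p+1\}$. The decisive observation is that consecutive Legendre polynomials have opposite parity under this reflection: the terms $\Psi_{p-1}^{'}$ and $\Psi_{p+1}^{'}$ each acquire a factor $(-1)^{p}$, whereas $\Psi_{p}^{'}$ acquires $(-1)^{p+1}=-(-1)^{p}$. Factoring out the common $(-1)^{p}$ therefore flips the sign of the $\Psi_{p}^{'}$ contribution relative to the $\Psi_{p\pm1}^{'}$ contributions, converting the plus sign that links the two groups in $g_{R}^{'}$ into a minus sign in $g_{R}^{'}(-r)$.

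Finally I would negate the result and compare it, term by term, with $g_{L}^{'}(r)$ computed in the first step. Since $g_{L}$ already carries the prefactor $(-1)^{p}/2$ together with the internal minus sign, while the reflection-plus-negation of $g_{R}^{'}$ reproduces exactly this prefactor and sign pattern, the two expressions coincide, which establishes $g_{L}^{'}(r)=-g_{R}^{'}(-r)$.

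The only real difficulty is bookkeeping: one must keep the three reflection factors $(-1)^{p+1}$, $(-1)^{p}$, $(-1)^{p}$ attached to the derivatives of degrees $p$, $p-1$, $p+1$ \emph{distinct}, because it is precisely the opposite sign carried by the middle term relative to the two outer terms that produces the identity. Collapsing them into a single common factor prematurely would obscure the sign flip and break the argument.
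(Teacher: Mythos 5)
Your proof is correct and follows essentially the same route as the paper: differentiate the analytic forms in Definitions~\ref{def:Left} and~\ref{def:right} and apply the reflection identity $\Psi_{m}^{'}\left(-r\right)=\left(-1\right)^{m+1}\Psi_{m}^{'}\left(r\right)$, with the sign flip of the middle term $\Psi_{p}^{'}$ relative to $\Psi_{p\pm 1}^{'}$ doing the work. The only difference is direction of reading --- you transform $g_{R}^{'}\left(-r\right)$ toward $g_{L}^{'}\left(r\right)$ while the paper transforms $g_{L}^{'}\left(r\right)$ toward $-g_{R}^{'}\left(-r\right)$ --- which is immaterial.
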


\begin{proof}
Differentiating Definition~\ref{def:Left}, and then using equation~\eqref{eq:derivative symetry}
\begin{equation}
\begin{array}{lll}
g_{L}^{'}\left(r\right)&=&\dfrac{\left(-1\right)^{p}}{2}\left[\Psi_{p}^{'}\left(r\right)-\dfrac{\eta_{p,\kappa}\Psi_{p-1}^{'}\left(r\right)+\Psi_{p+1}^{'}\left(r\right)}{1+\eta_{p,\kappa}}\right]\vspace{0.2cm}\\
&=&\dfrac{\left(-1\right)^{p}}{2}\left[\left(-1\right)^{p+1}\Psi_{p}^{'}\left(-r\right)-\dfrac{\eta_{p,\kappa}\left(-1\right)^{p}\Psi_{p-1}^{'}\left(-r\right)+\left(-1\right)^{p+2}\Psi_{p+1}^{'}\left(-r\right)}{1+\eta_{p,\kappa}}\right]\vspace{0.2cm}\\
&=&\dfrac{\left(-1\right)^{2p+1}}{2}\left[\Psi_{p}^{'}\left(-r\right)+\dfrac{\eta_{p,\kappa}\Psi_{p-1}^{'}\left(-r\right)+\Psi_{p+1}^{'}\left(-r\right)}{1+\eta_{p,\kappa}}\right]\vspace{0.2cm}\\
g_{L}^{'}\left(r\right)&=&-g_{R}^{'}\left(-r\right).
\end{array}
\end{equation}
\end{proof}

\begin{rmk}
This Theorem is trivial if we consider the symmetry of the correction functions (\cite{vincent_new_2011}, Eq. (2.14)), $g_{L}\left(r\right)=g_{R}\left(-r\right)$.
\end{rmk}

%\section*{References}

\bibliography{bibliographie}

\end{document}